\documentclass[10pt]{amsart}

\usepackage{mathtools}
\mathtoolsset{showonlyrefs=true}
\usepackage[hmargin=0.8in,height=8.6in]{geometry}
\usepackage{amssymb,amsthm, times}
\usepackage{delarray,verbatim}
\usepackage{ifpdf}
\ifpdf
\usepackage[pdftex]{graphicx}
 \else
\usepackage[dvips]{graphicx}
 \fi

\usepackage{bm}

\usepackage{amsmath}

\usepackage{ifpdf}
\usepackage{color}
\definecolor{webgreen}{rgb}{0,.5,0}
\definecolor{webbrown}{rgb}{.8,0,0}
\definecolor{emphcolor}{rgb}{0.5,0.95,0.95}

\usepackage{hyperref}
\hypersetup{%
	colorlinks=true,
	linkcolor=webbrown,
	filecolor=webbrown,
	citecolor=webgreen,
	breaklinks=true}
\ifpdf \hypersetup{pdftex,
	pdfstartview=FitH, 
	bookmarksopen=true,
	bookmarksnumbered=true
    } \else \hypersetup{dvips} \fi
\allowdisplaybreaks

\newcommand {\ud}{{\rm d}}

\newcommand {\B}{\mathcal{B}}

\numberwithin{equation}{section}

\newtheorem{theorem}{Theorem}[section]
\newtheorem{proposition}{Proposition}[section]

\newtheorem{remark}{Remark}[section]
\newtheorem{lemma}{Lemma}[section]

\numberwithin{remark}{section} \numberwithin{proposition}{section}
\numberwithin{corollary}{section}
\newcommand {\R}{\mathbb{R}}

\newcommand {\F}{\mathcal{F}}

\newcommand {\N}{\mathbb{N}}
\newcommand {\p}{\mathbb{P}}

\newcommand {\E}{\mathbb{E}}
\newcommand {\bE}{\mathbb{E}}

\newcommand {\bR}{\mathbb{R}}

\newcommand{\lev}{L\'{e}vy }

\newcommand{\e}{\mathbb{E}}

\newcommand{\Ei}{\rm{e}_1}

\begin{document}
	\title{Fluctuation theory for spectrally negative Lévy processes killed by additive functionals}
	\author[K. Noba]{Kei Noba$^\dagger$}
	\thanks{$\dagger$\, Department of Mathematics, Graduate School of Science, The University of Osaka Toyonaka, Osaka 560-0043, Japan. Email: noba.kei.sci@osaka-u.ac.jp}
	
	\author[J. L. P\'erez]{Jos\'e-Luis P\'erez$^{*}$}
	
	\thanks{$*$\, Department of Probability and Statistics, Centro de Investigaci\'on en Matem\'aticas A. C. Calle Jalisco
		s/n. C.P. 36240, Guanajuato, M\'exico. Email: jluis.garmendia@cimat.mx}
	\begin{abstract}
In this paper, we study fluctuation identities for spectrally negative Lévy processes killed by a general class of additive functionals. We consider positive co-natural additive functionals (PcNAFs), which include as special cases both absolutely continuous functionals and finite mixtures of local times. Our main result shows that the associated fluctuation identities—such as two-sided exit problems and resolvent measures—retain the same structure as in the classical case and can be expressed in terms of generalized scale functions.
These scale functions are characterized as the unique solutions to Volterra-type integral equations driven by Radon measures, thereby extending the results of \cite{LP,LZ}. Our approach is based on representing the additive functional as a mixture of local times with respect to its Revuz measure, combined with classical fluctuation identities and an approximation scheme for general Radon measures using Poisson random measures.

\noindent \small{\noindent  MSC 2020 Subject Classifications: 60G51, 60J45, 60J55, 60J35. \\
	\textbf{Keywords:} additive functionals, L\'evy processes, fluctuation theory, Revuz measures, Volterra equations.}
		\end{abstract}
	\maketitle
	\section{Introduction}
Fluctuation theory for L\'evy processes provides a powerful framework for the study of key path properties such as first passage times, overshoots, undershoots, and potential measures. Classical results in this area are based on the Wiener--Hopf factorization, It\^o excursion theory, and martingale methods, and have found numerous applications in areas such as queueing theory, mathematical finance, insurance risk, and biology, among many others.

In the case of spectrally negative L\'evy processes, fundamental fluctuation identities—including exit problems and potential measures—can be expressed in terms of scale functions. A comprehensive survey of the fluctuation theory of spectrally negative L\'evy processes, their associated scale functions, and applications is provided by Kuznetsov et al.~\cite{KKR}. We also refer the reader to the monograph by Kyprianou~\cite{K} for comprehensive accounts.

A natural and widely studied problem is to understand how these identities are modified when the process is observed up to a random time determined by a killing mechanism. In the classical setting, killing occurs either at an independent exponential time or upon exiting a given domain. More recently, attention has shifted to killing mechanisms that depend on the trajectory of the process itself. A notable example is the class of $\omega$-killed Lévy processes introduced by Li and Palmowski~\cite{LP}. 

In this setting, one considers a spectrally negative Lévy process $X=\{X_t:t\geq0\}$ that is killed at a state-dependent rate $\omega(X_t)$, where $\omega$ is a locally bounded real-valued function. Equivalently, the killing mechanism can be described in terms of an additive functional that is absolutely continuous with respect to the Lebesgue measure and admits $\omega(X_t)$ as its density. In this case, the associated fluctuation identities can still be expressed in terms of scale functions, which are characterized as solutions to certain integral equations. 
These results were subsequently extended by Czarna et al.~\cite{Czarna} to the case where the underlying process is a Markov additive process.

Another important extension is due to Li and Zhou~\cite{LZ}, who studied the case in which the additive functional is given by a finite linear combination of local times, under the assumption that the underlying spectrally negative L\'evy process is of unbounded variation.  Their results show that fluctuation identities can be preserved in this setting as well, at the expense of replacing classical scale functions by a recursively defined family of generalized scale functions.

In this paper, we consider a class of additive functionals known as positive co-natural additive functionals (PcNAFs) (see Section \ref{PcNAFforLevy} for details), which includes both the absolutely continuous case studied in~\cite{LP} and the finite mixtures of local times considered in~\cite{LZ}. Our aim is to generalize the fluctuation identities obtained in~\cite{LP,LZ} to the case of a general PcNAF $A=\{A_t:t\geq0\}$. This includes, in particular, the joint Laplace transforms of
\[
(\tau_b^+,A_{\tau_b^+}), \qquad (\tau_c^-,A_{\tau_c^-}),
\]
as well as the associated resolvent measures, where $\tau_a^+=\inf\{t\geq0:X_t>a\}$ and $\tau_a^-=\inf\{t\geq0:X_t<a\}$ denote the first passage times above and below the level $a$, respectively, with the convention that $\inf\emptyset=\infty$.  Our main result shows that, in this general setting, the corresponding fluctuation identities retain the same structure as in the classical case and can be expressed in terms of generalized scale functions characterized as solutions to Volterra-type integral equations driven by Radon measures. 

The starting point of our approach is the observation that any PcNAF associated with a spectrally negative Lévy process admits a representation as a mixture of local times at different levels with respect to its Revuz measure. We show that this measure is always Radon and that, conversely, any Radon measure gives rise to a PcNAF through such a representation. This yields a canonical parametrization of additive functionals in terms of measures and allows us to formulate the problem at the level of integral equations driven by Radon measures.

The analysis proceeds in two steps. We first consider the case where the Revuz measure is finite and atomic, corresponding to additive functionals given by finite mixtures of local times. In this setting, we extend the approach of~\cite{LZ} to include both bounded and unbounded variation cases, and we characterize the associated scale functions as solutions to integral equations. In a second step, we treat the general case by approximating the Revuz measure with a sequence of random atomic measures constructed from Poisson random measures independent of the underlying Lévy process. Since each element of the approximating sequence is almost surely a finite atomic measure, we can apply the previous results conditionally on the underlying Poisson random measure.

Finally, by establishing stability and convergence properties for the associated integral equations, we identify the limiting scale functions and derive the desired fluctuation identities.

Our results also connect with recent work on extending the notion of scale functions beyond the classical setting of spectrally negative L\'evy processes. Indeed, since $\{e^{-A_t}:t\ge0\}$ is a decreasing right multiplicative functional, it induces a new right process {(see, e.g., p.~38 of \cite{Sharpe}) with semigroup $\{Q_t:t\ge0\}$ given by
\[
Q_t f(x) := \E_x\!\left[e^{-A_t} f(X_t)\right],\qquad t\ge 0,
\]
via Theorem~(61.5) of \cite{Sharpe}. This process may be interpreted as the original process $X$ subjected to killing (or discounting) through the additive functional $A$.} The scale functions introduced here can then be viewed as the corresponding scale functions associated with this transformed process.

Another application of our results concerns the work of \cite{LP}. Their approach allows one to obtain scale functions for processes such as self-similar Markov processes, which are constructed from spectrally negative L\'evy processes via Lamperti-type time changes. With a minor modification of our results, it is also possible to derive scale functions for a spectrally negative L\'evy process time-changed by a general continuous additive functional, following the approach of \cite[(2.11), Section~V]{BG}.

The paper is organized as follows. In Section~\ref{Pre} we recall basic properties of spectrally negative Lévy processes and their scale functions. Section~\ref{PcNAFforLevy} introduces positive co-natural additive functionals and their representation in terms of local times and Revuz measures. The main fluctuation identities are stated in Section~\ref{sec:main_results}. Section~\ref{sec:prelimiary_results} contains the analysis of the associated integral equations and the case of finite mixtures of local times. The proofs of the main results are completed in Section~\ref{sec:proof_main_results} via an approximation argument based on Poisson random measures.

	\section{Preliminaries}\label{Pre}
	\subsection{Spectrally negative \lev processes}
	Let $X=\{X_t:  t\geq 0\}$ be a L\'evy process defined on a  probability space $(\Omega, \mathcal{F}, \p)$ as in Definition 1.1 of \cite{K}.  For $x\in \R$, we denote by $\p_x$ the law of $X$ when it starts at $x$ and write for convenience  $\p$ in place of $\p_0$. Accordingly, we shall write $\e_x$ and $\e$ for the associated expectation operators. In this paper we shall assume throughout that $X$ is \textit{spectrally negative},   meaning here that it has no positive jumps and that it is not the negative of a subordinator. 
	{Let ${\psi}:[0,\infty) \to \R$ be the Laplace exponent of the process $X$, i.e.}
	\begin{equation}\label{lk_formula}
		\e\big[{\rm e}^{\theta X_t}\big]=:{\rm e}^{\psi(\theta)t}, \qquad t, \theta\ge 0,
	\end{equation}
	given by the \emph{L\'evy-Khintchine formula}
	\begin{equation}\label{lk}
		\psi(\theta):=\gamma\theta+\frac{\sigma^2\theta^2}{2}+\int_{(-\infty , 0)}\big({\rm e}^{\theta x }-1-\theta x\mathbf{1}_{\{x>-1\}}\big)\Pi(\ud x), \quad \theta \geq 0,
	\end{equation}
	where $\gamma\in \R$, $\sigma\in \R$, and $\Pi$ is a measure on $(-\infty , 0)$ called the L\'evy measure of $X$ that satisfies
	\[
	\int_{(-\infty,0)}(1\land x^2){\Pi}(\ud x)<\infty.
	\]
		In particular, $X$ has bounded variation paths if and only if $\sigma=0$ and $\int_{(-\infty , 0)}(1\land |x|)\Pi(\ud x)<\infty$.
		Then, we can write 
		\begin{equation}
		\psi(\theta)={\mathbf{d}}\theta+\int_{(-\infty , 0)}\big({\rm e}^{\theta x }-1\big)\Pi(\ud x), \quad \theta \geq 0,
	\end{equation}
	where 
	\begin{align}
	{\mathbf{d}}:=\gamma-\int_{(-1 , 0)}x\Pi(\ud x).
	\end{align}
	\subsection{Scale functions for spectrally negative L\'evy processes}\label{Sec_scale_function}
	Fix $q \geq 0$. We use $W^{(q)}$ for the scale function of the spectrally negative \lev process $X$.  This is the mapping from $\R$ to $[0, \infty)$ that takes value zero on the negative half-line, while on the positive half-line it is a continuous and strictly increasing function that is defined by its Laplace transform:
	\begin{align} \label{scale_function_laplace}
		\begin{split}
			\int_0^\infty  \mathrm{e}^{-\theta x} W^{(q)}(x) dx&= \frac 1 {\psi(\theta)-q}, \quad \theta > \Phi(q),
		\end{split}
	\end{align}
	where $\psi$ is as defined in \eqref{lk} and
	\begin{align}
		\begin{split}
			\Phi(q) := \sup \{ \lambda \geq 0: \psi(\lambda) = q\} . 
		\end{split}
		\label{def_varphi}
	\end{align}
	We also define, for $x \in \R$,
	\begin{align}\label{fun_z_sf}
		Z^{(q)}(x) := 1 + q \overline{W}^{(q)}(x),
	\end{align}
	where $\overline{W}^{(q)}(x):=\int_0^x W^{(q)}(y) dy$. 
	Because $W^{(q)}(x) = 0$ for $-\infty < x < 0$, we have
	$Z^{(q)}(x) = 1$.
	
	By identity (6) in \cite{LRZ}, for $q,\lambda\geq0$, the following identities hold:
		\begin{align}\label{scale_fun_prop}
			W^{(q+\lambda)}(x-d)-W^{(q)}(x-d)&=\lambda\int_d^xW^{(q+\lambda)}(x-z)W^{(q)}(z-d)dz\notag\\
			&=\lambda\int_d^xW^{(q)}(x-z)W^{(q+\lambda)}(z-d)dz,\qquad x\geq d.
		\end{align}
				Regarding the asymptotic behavior near zero, as in Lemma 3.1 of \cite{KKR},
				\begin{align}\label{eq:Wqp0}
						W^{(q)} (0) &= \left\{ \begin{array}{ll} 0 & \textrm{if $X$ is of unbounded
								variation,} \\ 1/\mathbf{d} & \textrm{if $X$ is of bounded variation.}
						\end{array} \right. 
				\end{align}
				On the other hand, as in Lemma 3.3 of \cite{KKR},
				\begin{align}
					\begin{split}
						{\rm e}^{-\Phi(q) x}W^{(q)} (x) \nearrow \psi'(\Phi(q))^{-1}, \quad \textrm{as } x \uparrow \infty.
					\end{split}
					\label{W_q_limit}
				\end{align}
		For the process $X$, we define the first down- and up-crossing
		times, respectively, by 
		\[
		\tau_c^- := \inf \left\{ t \geq 0: {X_t} < c \right\}\quad \text{and}\quad \tau^+_b:= \inf \left\{ t \geq 0: {X_t}>  b \right\},\quad \text{for any $c\leq x\leq b$}. 
	\]
		Then by Theorem 1.2 and 2.6 in \cite{KKR}, we have
			
				\begin{align}
					\begin{split}
						\E_x \left[ {\rm e}^{-q \tau_b^+} 1_{\left\{ \tau_b^+ < \tau_c^- \right\}}\right] &= \frac {W^{(q)}(x-c)}  {W^{(q)}(b-c)}, \\
						\E_x \left[ {\rm e}^{-q \tau_c^-} 1_{\left\{ \tau_b^+ > \tau_c^- \right\}}\right] &= Z^{(q)}(x-c) -  Z^{(q)}(b-c) \frac {W^{(q)}(x-c)}  {W^{(q)}(b-c)}.
					\end{split}
					\label{laplace_in_terms_of_z}
				\end{align}
			
			Additionally, as in Theorem 2.7 in \cite{KKR}, for $c\leq x\leq b$ and a 
			measurable and bounded function 
			$f:\R\to\R$
			\begin{align}\label{resolvent}
				\E_x\left[\int_0^{\tau_b^+\wedge\tau_c^-}e^{-qt}f(X_t)dt\right]=\int_{c}^bf(y)\left\{\frac{W^{(q)}(x-c)}{W^{(q)}(b-c)}W^{(q)}(b-y)-W^{(q)}(x-y)\right\}dy.
			\end{align}
			
			\section{Additive functionals of L\'evy processes} \label{PcNAFforLevy}
A positive co-natural additive functional (PcNAF) {$A=\{A_t:t\geq0\}$} of the L\'evy process $X$ is a $\R_+$-valued process defined on $\Omega_A \in \F$ which satisfies the following conditions. 
\begin{enumerate}
\item $\p_x(\Omega_A)=1$ for all $x \in \R$. 
\item ${\theta_t}(\Omega_A) \subset \Omega_A$ for all $t>0$, {where $\theta_t:\Omega\to\Omega$ denotes the shift operator.} 
\item For {every} $\omega\in\Omega_A$, the map $t\mapsto A_t (\omega)$ is non-decreasing, right-continuous, takes finite values, and satisfies $A_0(\omega) =0$.  
\item There exists a non-negative measurable function $f_A$ such that {for all $\omega\in\Omega_A$ and $t>0$ we have}
\begin{align}
A_t (\omega)-A_{t-}(\omega)=f_A(X_t(\omega)).
\end{align}
\item {For all $\omega\in\Omega_A$ and $s,t>0$, we have}
			\begin{align}
			A_{s+t}(\omega)=A_t(\omega)+A_s(\theta_t(\omega)) <\infty.
			\end{align}
\end{enumerate}
{For clarity, we have written explicit dependence on $\omega$; however, this is typically omitted. This definition follows {from} \cite[p.229]{BB}.}
			
			In particular, when $X$ has paths of unbounded variation, such additive functionals have continuous paths. 
				In fact, suppose the PcNAF $A$ associated with a spectrally negative L\'evy process of unbounded variation has jumps. Then, there exists $x\in\R$ such that the function $f_A$ appearing in condition (4) satisfies $f_A(x)>0$. In this case, $x$ is not polar but regular for itself, so once $X$ hits $x$, it subsequently hits $x$ again infinitely many times in an accumulating manner. As a result, $A$ diverges to infinity, and the condition (3) is no longer satisfied. A PcNAF with continuous paths is called a positive continuous additive functional (PCAF).
				
			 Known examples of continuous additive functionals can be obtained by, for instance, considering a locally bounded measurable function ${\omega}:\R\to[0,\infty)$, and defining
			\begin{equation}\label{Af}
			A^{{\omega}}_t:=\int_0^t{\omega}(X_s)ds, \qquad t\geq0.
			\end{equation}
			Then, $A^{{\omega}}$ is a PCAF. 
			
			When $X$ has paths of unbounded variation, another example of a PCAF is given by the local time at level $a\in\R$, denoted by $L^a:=\{L_t^a:t\geq0\}$, which is defined by
			\[
			L^a_t:=\limsup_{\varepsilon\to0+}\frac{1}{2\varepsilon}\int_0^t1_{\{|X_s-a|\leq \varepsilon\}}\,ds, \qquad a\in\R,\ t\geq0.
			\]
			By Proposition~2 in Chapter~V of \cite{B}, the convergence holds in $L^2(\mathbb{P})$, uniformly on compact time intervals. The local time plays a fundamental role in this paper. 

			When $X$ has paths of bounded variation, we define the local time process  $L^a=\{L_t^a:t\geq0\}$ by
			\begin{align}\label{lt_bv}
			L^a_t=\frac{1}{{\mathbf{d}}}\sum_{s\in(0, t]}1_{\{a\}}(X_s),\quad t\geq 0, 
			\end{align}
			which is not a PCAF but a PcNAF. 
			A key property that is shared by $\{L^a\}_{a\in\R}$ in both the bounded and unbounded variation cases is that it satisfies the occupation density formula (see, (2) in Chapter V of \cite{B} and (1.5) in \cite{FP}).

			There is a close connection between local times and 
				PcNAFs of a L\'evy process, namely that every PcNAF can be represented as a mixture of local times at different levels.
			To describe this relationship, we first introduce the Revuz measure $\nu_A$ of a PcNAF $A$, defined with respect to the Lebesgue measure by
\begin{align}
\nu_A(B)=\lim_{t\downarrow0}\frac{1}{t}
\int_\R \E_x\left(\int_{(0, t]}1_B(X_s)d A_s\right) dx
,\quad B\in\mathcal{B}(\R).
\label{Revuz_measure}
\end{align}
This definition follows \cite[(8.9)]{Getoor} with $q=0$.
			
{We collect several properties of Revuz measures associated with PcNAFs of spectrally negative L\'evy processes; the proofs are deferred to Appendix~\ref{prop_PCNAF}. In particular, we first note that the Revuz measure of any PcNAF is always a Radon measure.}
\begin{proposition}\label{PropD2}
	The Revuz measure $\nu_A$ associated with a PcNAF $A$ is a Radon measure.
\end{proposition}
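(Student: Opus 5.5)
Since $\nu_A$ is a Borel measure on $\R$, being Radon only requires local finiteness: $\nu_A(K)<\infty$ for every compact $K$. Because of translation invariance of the Lebesgue reference measure, it suffices to show $\nu_A([a,a+1])<\infty$ for every $a\in\R$. The plan is to pass from the $t\downarrow 0$ limit in \eqref{Revuz_measure} to a $q$-potential formulation, and then use spatial homogeneity of $X$ to trade the integral over starting points $x$ for a bound at a single starting point.

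\textbf{Step 1: monotone/potential form.} I will use the standard identity, for PcNAFs of a process in duality with Lebesgue measure (here $X$ and $-X$ are in duality with respect to $\ud x$), that the limit in \eqref{Revuz_measure} exists and equals
\[
\nu_A(B)=\lim_{q\to\infty} q\int_\R \E_x\Big(\int_{(0,\infty)} e^{-qs}1_B(X_s)\,dA_s\Big)dx ,
\]
and that $t\mapsto \frac1t\int_\R\E_x(\int_{(0,t]}1_B(X_s)dA_s)dx$ is in fact constant in $t$ (by the additivity property (5), the Markov property, and invariance of Lebesgue measure for $X$: $\int\E_x[g(X_t)]dx=\int g$). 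Indeed, writing $m_B(t)$ for the double integral, additivity gives $m_B(t+s)=m_B(t)+\int\E_x[\E_{X_t}(\int_{(0,s]}1_B dA)]dx=m_B(t)+m_B(s)$, so $m_B$ is additive and nondecreasing, hence linear: $m_B(t)=t\,\nu_A(B)$. So it suffices to show $m_B(1)<\infty$ for $B=[a,a+1]$.

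\textbf{Step 2: reduction to one starting point.} Only paths visiting $B$ before time $1$ contribute. Split $m_B(1)=\int_\R \E_x(\int_{(0,1]}1_B(X_s)dA_s)dx$. Let $T_B=\inf\{s:X_s\in B\}$. By the strong Markov property at $T_B$ and additivity, $\E_x(\int_{(0,1]}1_B dA)\le \p_x(T_B\le 1)\sup_{y\in B}\E_y[A_1^B]$, where $A^B_t=\int_{(0,t]}1_B(X_s)dA_s$ (with a small care for a jump at $T_B$ itself, absorbed by $f_A(X_{T_B})$; finiteness of $f_A$ on $B$ is handled in Step 3). Then $\int_\R\p_x(T_B\le1)dx\le 1+\E_0[\sup_{s\le1}X_s]+\E_0[-\inf_{s\le 1}X_s]\cdot$(suitable truncation); more cleanly, $\int\p_x(T_B\le 1)dx=\E_0[\text{Leb}\{x: x+X_s\in B\text{ some }s\le1\}]\le 1+\E[\sup_{s\le1}X_s-\inf_{s\le1}X_s]$, but the infimum may be non-integrable due to heavy negative jumps; to avoid this I will instead use $T_B$ only for $x>a+1$ (the process creeps upward, so from $x<a$ it enters $B$ by passing continuously through $a$, giving $\int_{-\infty}^a\p_x(\tau_a^+\le1)dx=\E[\sup_{s\le1}X_s]<\infty$), and for $x>a+1$ truncate large jumps: the contribution from the first big jump is bounded by $\Pi((-\infty,-1])$ times a uniform bound. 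This gives finiteness provided $\sup_{y\in B}\E_y[A^B_1]<\infty$.

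\textbf{Step 3 (main obstacle): uniform local bound on $A$.} I need $\sup_{y\in[a,a+1]}\E_y[A_1]<\infty$, using only that $A_t<\infty$ a.s. under each $\p_x$. The plan: pick $y_0$ and $c$ with $\p_{y_0}(A_1\le c)\ge 1/2$... more robustly, use the classical argument that $u(y)=\E_y[\int_0^\infty e^{-s}e^{-A_s}\dots]$; specifically consider $\phi(y)=\E_y[1-e^{-A^B_{\tau}}]$ with $\tau=\tau_{a-1}^-\wedge\tau^+_{a+2}\wedge 1$. Finiteness of $A$ gives $\sup_y\phi(y)=\delta<1$ (needs uniformity: from $y\in B$ the process reaches the rightmost point $a+1$ by creeping, and from $a+1$ the upper part covers $[y,a+1]$; I would compare with starting point via first hitting times and argue $\phi$ attains a bound $<1$ through the strong Markov property at $\tau^+_{a+1}$). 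Then the iteration $\p_y(A^B_\tau>kc)\le \delta'^k$ by the strong Markov property at successive times $A^B$ crosses multiples of $c$, giving exponential tails and hence $\sup_{y}\E_y[A^B_\tau]<\infty$; repeating over the stopping intervals up to time $1$ (number of returns controlled geometrically too) yields the needed bound and finishes the proof.
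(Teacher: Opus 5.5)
Your argument is correct in outline, but it takes a genuinely different route from the paper. The paper does no direct estimation. It passes to $X^{\Ei}$ and replaces $A$ by a PLAF $\hat A$ with $A_t=\hat A_{t+}-\hat A_{0+}$; it checks, using the absence of positive jumps, that $\{f_{\hat A}>0\}$ is countable, so the two Revuz measures agree. It then quotes \cite[Corollary 6.5.12]{BB} to get that $\nu_A$ is $\Lambda$-strongly smooth. Finally, Proposition~\ref{PropD1} shows that strongly smooth is equivalent to Radon here, because the only $\Lambda$-polar set is empty and a path from $-a$ to $(a,\infty)$ must cross all of $[-a,a]$, so every nest eventually contains $[-a,a]$.

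You exploit the same ``passes through every level'' fact, but quantitatively. Translation invariance of Lebesgue measure gives $m_B(t)=t\,\nu_A(B)$, which also yields existence of the limit and countable additivity for free. The spatial integral is reduced to hitting probabilities, and local integrability of $A$ is extracted from its a.s.\ finiteness. The paper's route is short given \cite{BB} and sets up the framework reused for Propositions~\ref{conv_radon} and~\ref{Prfof(2.10)}. Yours is self-contained and even gives local exponential moments of $A$, at the price of hands-on estimates.

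Two points in Step 3 need tightening.

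\textbf{Uniformity in $y$.} The uniform bound over $y\in B$ does not come from the strong Markov property at $\tau^+_{a+1}$ started from $y$. It comes from fixing $x_0<a$ and applying the strong Markov property at $\tau^+_y$, where $X_{\tau^+_y}=y$ by creeping. This gives $\p_{x_0}(\tau^+_{a+1}\le t_0)\,\p_y(A_1>K)\le \p_{x_0}(A_{t_0+1}>K)$ for every $y\in B$, and the right-hand side tends to $0$ as $K\to\infty$. You must take $t_0$ large: in the bounded variation case $X_t\le \mathbf{d}t$, so $\p_{x_0}(\tau^+_{a+1}\le 1)$ can vanish.

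\textbf{The bound on $f_A$.} You defer the bound on $f_A$ to Step 3 but never supply it. Under $\p_{x_0}$, the jumps of $A$ at the distinct times $\tau^+_y$, $y\in B$, give $\sum_{y\in B}f_A(y)\le A_{\tau^+_{a+1}}<\infty$ on $\{\tau^+_{a+1}<\infty\}$. The constant $M:=\sup_B f_A$ is exactly what controls the overshoot of $A^B$ at $T_B$ and at level-crossing times. With it, $\p_y(A^B_1>(n+1)(K+M))\le \delta\,\p_y(A^B_1>n(K+M))$ with $\delta=\sup_{z\in B}\p_z(A^B_1>K)<1$. This makes the exit times in your $\tau$ and the counting of returns unnecessary.

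For Step 2, a clean way to write your big-jump bound is as follows. $\int_\R \p_x(T_B\le 1)\,dx$ is the expected Lebesgue measure of $\bigcup_{s\le 1}[a-X_s,a+1-X_s]$. This is at most $(1+N)(1+R)$, where $N$ is the number of jumps below $-1$ in $[0,1]$ and $R$ is the oscillation on $[0,1]$ of the small-jump part. These two are independent and have finite means.
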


Moreover, the converse statement also holds.
\begin{proposition}\label{conv_radon}
	For any Radon measure $\nu$, there exists a PcNAF $A$ such that its Revuz measure is $\nu$.
\end{proposition}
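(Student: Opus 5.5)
The natural candidate is the mixture of local times
\begin{align}
A_t:=\int_\R L^a_t\,\nu(\ud a),\qquad t\ge0,
\end{align}
where $\{L^a_t\}$ is a jointly measurable version of the local time field (continuous in $(a,t)$ in the unbounded variation case, and given by \eqref{lt_bv} in the bounded variation case). The plan has four steps.

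\textbf{Step 1: $A$ is finite and a PcNAF.} For fixed $\omega$ and $t$, the map $a\mapsto L^a_t$ has compact support: it vanishes outside $[\inf_{s\le t}X_s,\sup_{s\le t}X_s]$, which is compact because the paths are c\`adl\`ag. The map is also bounded.
\begin{itemize}
\item In the unbounded variation case, boundedness follows from joint continuity, which holds for spectrally negative processes of unbounded variation (Barlow's criterion, or Chapter V of \cite{B}).
\item In the bounded variation case, $L^a_t$ counts the times $s\le t$ with $X_s=a$, divided by $\mathbf{d}$. Because $X$ has drift $\mathbf{d}>0$ between jumps, each level is visited finitely often. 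By the occupation formula, the total mass $\int L^a_t\,\ud a$ is finite, but a supremum bound is still required. For this I would use the fact that each visit to $a$ must be followed, after a negative jump, by an upward creep back to $a$; on $[0,t]$ the number of such returns is bounded by the number of jumps crossing $a$ plus one. This gives only a.s. finiteness for each $a$, so local boundedness in $a$ needs a separate argument. A cleaner route is to note that the set of big-jump times is finite, and that between big jumps the small-jump structure still permits a counting bound. This is the part I expect to be most delicate.
\end{itemize}
Since $\nu$ is Radon, it is finite on compact sets, so $A_t<\infty$.

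Monotonicity, right-continuity and $A_0=0$ are inherited from each $L^a$ via monotone and dominated convergence. Additivity $A_{s+t}=A_t+A_s\circ\theta_t$ follows by integrating the additivity of each $L^a$; to have this hold simultaneously for all $a$, I would choose a perfect version of the local time field. Conditions (1) and (2) then come from defining $\Omega_A$ as the shift-invariant set where the perfected local times behave well. For condition (4), in the bounded variation case $\Delta A_t=\nu(\{X_t\})/\mathbf{d}$, so $f_A(x)=\nu(\{x\})/\mathbf{d}$. In the unbounded variation case $A$ is continuous and $f_A=0$.

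\textbf{Step 2: Revuz measure of each local time.} I would show that the Revuz measure of $L^a$ is $\delta_a$. Take $B$ Borel and a bounded test function. By the occupation density formula and Fubini, together with translation invariance of Lebesgue measure,
\begin{align}
\int_\R\E_x\Big[\int_{(0,t]}g(X_s)\,\ud L^a_s\Big]\ud x=g(a)\int_\R \E_x[L^a_t]\,\ud x=g(a)\,\E_0\Big[\int_\R L^{y}_t\,\ud y\Big]=g(a)\,t.
\end{align}
Here $\int_{(0,t]}g(X_s)\ud L^a_s=g(a)L^a_t$ because $L^a$ increases only on $\{X_s=a\}$. The middle equality uses $\E_x[L^a_t]=\E_0[L^{a-x}_t]$, and the last uses the occupation formula $\int L^y_t\,\ud y=t$.

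\textbf{Step 3: identify $\nu_A$.} For the mixture, Fubini gives
\begin{align}
\frac1t\int_\R\E_x\Big[\int_{(0,t]}1_B(X_s)\,\ud A_s\Big]\ud x=\int_\R 1_B(a)\,\nu(\ud a)=\nu(B)
\end{align}
exactly, for every $t$. The limit in \eqref{Revuz_measure} is therefore trivial, and $\nu_A=\nu$. Fubini is justified because all integrands are nonnegative.

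\textbf{Main obstacle.} The main difficulty is not the Revuz computation but Step 1. One needs a single version of $(a,t)\mapsto L^a_t$ that is jointly measurable, additive under shifts for all $a$ and all $s,t$ on a common shift-invariant full-measure set, and locally bounded in $a$ so that $A_t<\infty$ for every $\nu$ Radon. I would handle this by invoking joint continuity in the unbounded variation case, and the explicit counting representation \eqref{lt_bv} in the bounded variation case.
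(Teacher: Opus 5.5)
Steps 2 and 3 of your proposal are correct, and they are more direct than the paper's route: the quotient in \eqref{Revuz_measure} equals $\nu(B)$ exactly for every $t$. The genuine gap is in Step 1, precisely at the point you call the main obstacle. Your claim that $a\mapsto L^a_t$ is locally bounded is false in general, in both regimes. You use that claim to get $A_t<\infty$ and to pick a perfect version.

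\emph{Unbounded variation.} Joint continuity is not automatic. Barlow's criterion (see Chapter V of \cite{B}) says it holds if and only if $\int_{0+}\bar\delta(h)\,h^{-1}(\log(1/h))^{-1/2}\,dh<\infty$. Here $\delta(h)^2=2u^1(0)-u^1(h)-u^1(-h)$ and $\bar\delta$ is its monotone rearrangement. When the integral diverges, $a\mapsto L^a_t$ is not even locally bounded. For a spectrally negative process, $u^1(y)=\Phi'(1)e^{-\Phi(1)y}-W^{(1)}(-y)$, so $\delta(h)^2=W^{(1)}(h)+O(h^2)$, and $W^{(1)}(h)\asymp 1/(h\psi(1/h))$ as $h\downarrow0$. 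Take $\sigma=0$ and $\Pi(dx)=|x|^{-2}(\log(1/|x|))^{-1}dx$ near $0$. Then $\psi(\theta)\asymp\theta\log\log\theta$, hence $\delta(h)\asymp(\log\log(1/h))^{-1/2}$, and the integral diverges.

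\emph{Bounded variation.} No counting bound can work either. In the level variable $h$, the jumps taken from the running supremum form a Poisson point process with intensity $\mathbf{d}^{-1}dh\otimes\Pi(dx)$. Each such jump of size at least $2^{-k}$, taken from a level in $(j2^{-k},(j+1)2^{-k})$, forces a later creeping visit to $j2^{-k}$. Suppose $\Pi$ gives mass of order $2^{k}/k^{2}$ to sizes in $[2^{-k},2^{-k+1})$ (still bounded variation, with constants chosen so that $X$ drifts to $+\infty$). Then the $2^k$ counts are i.i.d.\ Poisson with mean of order $k^{-2}$, and their maximum exceeds any fixed $m$ with probability tending to one. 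Hence $a\mapsto L^a_{\tau_1^+}$ is a.s.\ unbounded on $(0,1)$.

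\emph{How to repair it.} Finiteness does not need boundedness; a first-moment bound suffices. We have $\E_x[L^a_t]\le e^t\E_x[\int_0^\infty e^{-s}dL^a_s]\le e^t\Phi'(1)$ uniformly in $x$ and $a$, so $\E_x[\int_{[-K,K]}L^a_t\,\nu(da)]<\infty$ for every $K$, and $A_t<\infty$ for all $t$, $\p_x$-a.s. In the bounded variation case this closes the gap. The counting local time \eqref{lt_bv} is additive under shifts for every $\omega$ and all $a$ simultaneously. So $\Omega_A=\{A_t<\infty\ \forall t\}$ is shift-invariant, and (3)--(5) hold with $f_A(x)=\nu(\{x\})/\mathbf{d}$. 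In the unbounded variation case, without joint continuity, you must work with a jointly measurable version. You would get monotonicity, continuity and a.s.\ additivity of $A$ from properties holding for $\nu$-a.e.\ $a$ (via Fubini), and then perfect $A$ with a general perfection theorem for additive functionals.

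\emph{Comparison with the paper.} The paper avoids all of this by staying abstract. It shows that Radon measures are exactly the $\Lambda$-strongly smooth ones (Proposition~\ref{PropD1}). It then takes a PLAF $\hat A$ with Revuz measure $\nu$ from \cite[Theorem 6.5.9]{BB}. Finally it converts $\hat A$ into the PcNAF $A_t=\hat A_{t+}-\hat A_{0+}$, which has the same Revuz measure by \cite[Proposition 6.5.5]{BB} and \eqref{eqRev}. No spatial regularity of local times is needed. Your route, once repaired, gives more: it identifies the PcNAF explicitly as $\int L^a_t\,\nu(da)$. The paper obtains that only by combining this proposition with Proposition~\ref{Prfof(2.10)}.
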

			{We now turn our attention to the representation of a PcNAF as a mixture of local times at different points in terms of its Revuz measure.
				The local times defined in this section play a significant role in this representation, as shown in the following result.
\begin{proposition}\label{Prfof(2.10)}
	Let $A$ be a PcNAF and $\nu_A$ its Revuz measure. Then, for any $x \in \R$,
	\begin{align}
		A_t=\int_{\R} L^a_t \, \nu_A(da), \qquad t \geq 0, \label{representation_equation}
	\end{align}
	$\p_x$-a.s.
\end{proposition}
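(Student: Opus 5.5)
Let $B_t := \int_\R L^a_t\,\nu_A(da)$. The plan is to show that $B$ is itself a PcNAF with the same Revuz measure as $A$, and then to invoke the uniqueness of a PcNAF given its Revuz measure. That uniqueness is the classical correspondence between additive functionals and Revuz measures: two PcNAFs whose $q$-potentials $U^q_A f(x)=\E_x\int_0^\infty e^{-qt}f(X_t)\,dA_t$ coincide are indistinguishable. See \cite{Getoor} and \cite[Ch.~IV]{BB}.

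\textbf{Step 1: $B$ is a PcNAF.} By Proposition~\ref{PropD2}, $\nu_A$ is Radon. The local-time field $(a,t)\mapsto L^a_t$ of a spectrally negative L\'evy process is jointly measurable, nondecreasing in $t$, and right-continuous in $t$. On $[0,t]$ the map $a\mapsto L^a_t$ vanishes outside the compact range $[\inf_{s\le t}X_s,\sup_{s\le t}X_s]$.
\begin{itemize}
\item In the unbounded variation case, Theorem~V.15 of \cite{B} (or its analogue in \cite{FP}) gives joint continuity, hence boundedness on compacts.
\item In the bounded variation case, \eqref{lt_bv} gives $L^a_t\le N^a_t/\mathbf{d}$. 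Here $N^a_t$ is the number of visits to $a$, which is finite and locally bounded because the drift $\mathbf{d}>0$ forces the path to cross each level only finitely often on $[0,t]$ with positive speed.
\end{itemize}
In either case $B_t<\infty$, and monotonicity and right-continuity follow by dominated convergence. Additivity $B_{s+t}=B_t+B_s\circ\theta_t$ follows by integrating the additivity of each $L^a$ against $\nu_A$, on a common exceptional set chosen through a jointly measurable version. The jump condition holds with $f_B(x)=\nu_A(\{x\})/\mathbf{d}$, and $f_B\equiv 0$ in the unbounded variation case.

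\textbf{Step 2: the Revuz measures agree.} By Fubini and the occupation density formula, for $B$ a Borel set,
\[
\int_\R \E_x\Big[\int_{(0,t]}1_B(X_s)\,dL^a_s\Big]dx = 1_B(a)\int_\R\E_x[L^a_t]\,dx .
\]
By translation invariance, $\int_\R \E_x[L^a_t]\,dx=\int_\R\E[L^{a-x}_t]\,dx=\E\big[\int_\R L^y_t\,dy\big]=t$. Therefore $\nu_{L^a}=\delta_a$. Integrating against $\nu_A$, using Fubini and the fact that $dB_s=\int dL^a_s\,\nu_A(da)$, gives $\nu_B=\nu_A$. Here the quantity is exactly linear in $t$, so no limit is needed.

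\textbf{Step 3: uniqueness (the main obstacle).} Revuz measures with respect to Lebesgue measure $\xi$ determine the potentials through the identity
\[
\int_\R g(x)\,U^q_A f(x)\,dx=\int f\,\hat U^q g\,d\nu_A ,
\]
where $\hat U^q$ is the resolvent of the dual process $-X$, for which Lebesgue measure is invariant. I would verify this identity directly for $B$, where it reduces to the potential density of local times: $\E_x\int e^{-qt}dL^a_t=u^q(a-x)$. I would then cite \cite[(8.9) and the uniqueness theorem]{Getoor} for $A$. Equality of $\int g\,U^q_Af\,dx$ and $\int g\,U^q_Bf\,dx$ for all $g$ gives $U^q_Af=U^q_Bf$ almost everywhere.

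The delicate point is upgrading this to every starting point $x$. I would use that $U^q_Af$ is $q$-excessive and that $X$ has a resolvent absolutely continuous with respect to Lebesgue measure: $\mathbb P_x(X_{\mathbf e_q}\in dy)\ll dy$, since $W^{(q)}$ exists. Two excessive functions that agree almost everywhere then agree everywhere, as one sees by letting $\lambda\to\infty$ in $\lambda U^{q+\lambda}$.

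Finally, $M_t=\int_0^t e^{-qs}d(A-B)_s+e^{-qt}(U^q_Af-U^q_Bf)(X_t)$ is a martingale of finite variation with zero potential. The standard argument \cite[IV.2.13]{BB} then yields $A=B$, $\p_x$-a.s., for every $x$.
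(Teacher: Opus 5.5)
Your proof has the same skeleton as the paper's. Step 2 is exactly the paper's computation: $\nu_{L^a}=\delta_a$, then Fubini/monotone convergence gives $\nu_B=\nu_A$. The conclusion then rests on uniqueness of an additive functional with a given Revuz measure, plus the absence of exceptional starting points. The two proofs differ in how that last part is done.

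\begin{itemize}
\item \textbf{The paper's route.} It passes to the associated left additive functionals $\hat A,\hat A'$. Exceptional sets are $\Lambda$-strongly inessential, hence $\Lambda$-polar, hence empty, because a process without positive jumps hits every point above its starting point with positive probability. It then quotes \cite[Theorem 6.5.11]{BB}.
\item \textbf{Your route.} You work through $q$-potentials. Revuz duality with respect to Lebesgue measure gives $U^q_Af=U^q_Bf$ a.e. Absolute continuity of the resolvent upgrades this to every $x$ via $\lambda U^{q+\lambda}$. A martingale argument then gives indistinguishability under each $\p_x$.
\end{itemize}

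Your route is more classical and shows where ``for every $x$'' comes from: absolute continuity of the resolvent plays the role of the polar-set computation. Your Step 1 also checks that $B$ is a PcNAF, which the paper takes for granted. The paper's route, in exchange, lets the abstract theory handle jumps, perfectness and integrability all at once.

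Three points need tightening.

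\textbf{Step 1, bounded variation case.} Finiteness of the number of visits to each level does not give local boundedness of $a\mapsto L^a_t$, and you do not need it. By the strong Markov property at the first hitting time of $a$, $\E_x[L^a_t]\le\E_0[L^0_t]<\infty$. Hence $\E_x[\int_K L^a_t\,\nu_A(da)]\le\nu_A(K)\,\E_0[L^0_t]$ for every compact $K$. Since the range of $X$ on $[0,t]$ is compact, $B_t<\infty$ a.s. in both variation cases, without needing joint continuity.

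\textbf{Step 3, choice of test functions.} Take $q>0$ and $f$ bounded with compact support. For a Radon $\nu_A$ that grows fast, $U^q_A1$ can be infinite.

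\textbf{Step 3, bounded variation case.} Here $A$ and $B$ jump, so a finite-variation martingale need not vanish; a compensated Poisson process is a counterexample. You must verify that $A$ and $B$ are natural.
\begin{itemize}
\item Their jumps occur only when $X$ sits at one of countably many points.
\item By the compensation formula and absolute continuity of the resolvent, $X$ lands exactly on a fixed point by a jump with probability zero.
\item So a.s. these visits happen only by upward creeping, i.e.\ at predictable times at which $X$ is continuous.
\end{itemize}
Then $\int_0^t e^{-qs}f(X_s)\,d(A-B)_s$ is a predictable finite-variation martingale and hence vanishes.

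Finally, the uniqueness theorem you have in mind is in Blumenthal--Getoor, which is \cite{BG} in this paper; \cite{BB} is Beznea--Boboc.
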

	This representation is proved in Theorem V.5(iii) of \cite{B} for integrable PCAFs of Lévy processes with unbounded variation paths. In Proposition~\ref{Prfof(2.10)}, we extend \eqref{representation_equation} to arbitrary PcNAFs of spectrally negative Lévy processes.
	
	Conversely, for every Radon measure $\nu$, the process $\left\{\int_{\R} L_t^a \, \nu(da) : t \geq 0 \right\}$ defines a PcNAF. This follows from Proposition~\ref{conv_radon} together with \eqref{representation_equation}.
	
	This representation will play a central role in the sequel: we first analyze the case of atomic Revuz measures and then extend the results to general Radon measures via approximation.} 

			\section{Main results}\label{sec:main_results}
			In this section, we present the main fluctuation identities for spectrally negative Lévy processes killed by a positive co-natural additive functional (PcNAF). Our results provide a unified and substantial extension of the existing theory by covering, in a single framework, both state-dependent killing mechanisms (as in \cite{LP}) and finite mixtures of local times (as in \cite{LZ}). In contrast to the existing literature, we treat general PcNAFs, thereby capturing a broad class of killing mechanisms that go beyond both state-dependent rates and finite mixtures of local times.
				
				Our main contribution shows that, even in this level of generality, the classical fluctuation identities—such as two-sided exit problems and resolvent measures—retain their canonical form. This is achieved by introducing a new class of generalized scale functions, characterized as the unique solutions to Volterra-type integral equations driven by Radon measures. This representation not only extends the classical theory but also provides a robust analytical framework that allows for approximation, stability, and convergence arguments, which are essential in handling general additive functionals. 
			
			We begin by presenting the identities for the two-sided exit problem. We then derive the corresponding expressions for one-sided exit problems, both upwards and downwards, and conclude with formulas for the associated resolvent measures.
			
			Let $\mathcal{M}$ denote the space of Radon measures on $\mathbb{R}$. For any $\nu \in \mathcal{M}$, we denote its atomic and diffuse parts by $\nu^a$ and $\nu^d$, respectively. We define a mapping ${\mathbf{T}}: \mathcal{M} \to \mathcal{M}$ as follows. If $\nu \in \mathcal{M}$ such that $\nu^a(dz) = \sum_{i=1}^\infty p_i \delta_{a_i}(dz)$, where $a_i \in \mathbb{R}$ and $p_i \geq 0$, then 
			\begin{align}\label{oper_T}
			{\mathbf{T}}(\nu)(dz)=\sum_{i=1}^{\infty}{\mathbf{d}}(1-e^{-p_i/{\mathbf{d}}})\delta_{a_i}(dz)+\nu^d(dz).
			\end{align}
			{Recall that when $X$ has paths of unbounded variation, the previous identities are understood in the limiting sense as ${\mathbf{d}} \to \infty$, and thus we regard ${\mathbf{d}}(e^{p/{\mathbf{d}}}-1)=p$.}
			
			The fluctuation identities will be expressed in terms of a new class of scale functions, which will be characterized as the unique solutions to integral equations. Thus, we first establish the following result, which provides existence and uniqueness for a class of integral equations. The proof is deferred to Appendix \ref{proof_exist_uni}.
			\begin{proposition}\label{exist_uni}
				For any $T>y$
				, a locally bounded function $h:\R\to\R$, and two Radon measures $\nu$ and $\lambda$ such that 
				\begin{align}\label{assum_atoms}
					\inf_{x\in[y,T]}(1-W^{(q)}(0)(\nu-\lambda)\{x\})>0,
				\end{align}
				the equation
				\begin{align}\label{volterra_prop}
					H(x,y)= h(x-y)+\int_{[y,x]}W^{(q)}(x-z)H(z,y)(\nu-\lambda)(dz),\qquad x\in\R.
				\end{align}
				admits a unique locally bounded solution {$H(x, y)=H_{\nu-\lambda}^{(q)}(x, y)$} on $[y,T]$. Furthermore, if $h(x)\geq0$ for all $x\in[y,T]$ and $\nu-\lambda$ is a non-negative measure, then $H(x,y)\geq0$ for all $x\in[y,T]$.
			\end{proposition}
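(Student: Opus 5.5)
We treat \eqref{volterra_prop} as a linear Volterra equation of the second kind with kernel $K(x,dz)=W^{(q)}(x-z)\mathbf 1_{[y,x]}(z)\,\mu(dz)$, where $\mu:=\nu-\lambda$ is a signed Radon measure. On $[y,T]$ its total variation $|\mu|$ is finite, and $W^{(q)}$ is bounded there by $W^{(q)}(T-y)$. The plan is to solve the equation by Picard iteration on a finite partition of $[y,T]$. The only real difficulty is that the integral includes the diagonal point $z=x$. This matters only when $W^{(q)}(0)>0$ (the bounded variation case) and $\mu$ has an atom at $x$. In that case the equation at the point $x$ reads
\[
H(x,y)\bigl(1-W^{(q)}(0)\mu\{x\}\bigr)=h(x-y)+\int_{[y,x)}W^{(q)}(x-z)H(z,y)\mu(dz).
\]
Assumption \eqref{assum_atoms} is exactly what allows us to divide by the bracket, so the equation is equivalent to an explicit Volterra equation with strict integration domain $[y,x)$ and modified data.

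\emph{Step 1 (reduction and partition).} Set $\kappa:=\inf_{x\in[y,T]}(1-W^{(q)}(0)\mu\{x\})>0$. Split $\mu=\mu_1+\mu_2$ on $[y,T]$, where $\mu_1$ consists of the finitely many atoms with $|\mu\{x\}|\ge\varepsilon$ and $\mu_2$ is the remainder, with all atoms smaller than $\varepsilon$. Since $\mu_2$ has atoms of size less than $\varepsilon$, we can choose a partition $y=t_0<t_1<\dots<t_n=T$ that includes the atoms of $\mu_1$ and satisfies $W^{(q)}(T-y)\,|\mu_2|((t_{k},t_{k+1}))\le 1/2$ for each $k$. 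This is possible because the distribution function of $|\mu_2|$ has jumps smaller than $\varepsilon$, with $\varepsilon$ chosen small.

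\emph{Step 2 (existence and uniqueness, inductively).} Suppose $H$ is already uniquely determined and bounded on $[y,t_k)$.
\begin{itemize}
\item At the point $t_k$, the relation displayed above determines $H(t_k,y)$ uniquely, dividing by the bracket, which is at least $\kappa$.
\item On $(t_k,t_{k+1})$, the equation becomes $H=g+\mathcal K H$. Here $g$ collects $h$ and the integral over $[y,t_k]$, and is bounded. The operator $\mathcal K$ integrates over $(t_k,x]$.
\end{itemize}
To control $\mathcal K$ on the open piece, we again move the diagonal atom to the left side, using \eqref{assum_atoms}. The remaining operator, with the domain $(t_k,x)$, has sup-norm at most $W^{(q)}(T-y)|\mu_2|((t_k,t_{k+1}))/\kappa$. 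We want this to be below $1$, so we choose the partition with the constant $\kappa/2$ instead of $1/2$. The operator is then a contraction on bounded functions, and Banach's fixed point theorem gives a unique bounded solution on that piece.

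Gluing the pieces gives a unique locally bounded solution on $[y,T]$. Uniqueness among all locally bounded solutions follows from the same argument: the difference $D$ of two solutions solves the homogeneous equation and vanishes piece by piece. Alternatively, one can use a Gronwall-type bound $|D(x)|\le C\int_{[y,x)}|D|\,d|\mu|$ after removing the diagonal atom, which forces $D\equiv0$.

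\emph{Step 3 (positivity).} Now suppose $\mu\ge0$ and $h\ge0$. Then $\kappa\le 1-W^{(q)}(0)\mu\{x\}\le1$, so dividing by the bracket preserves signs. The Picard iterates $H_0=g$, $H_{m+1}=g+\mathcal K H_m$ are nonnegative on each piece, since the kernel $W^{(q)}\ge0$. Their limit, which is the unique solution, is therefore nonnegative. By induction over the pieces, each new $g$ is nonnegative, so $H\ge0$ on $[y,T]$.

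The main obstacle is Step 1/2: handling atoms of $\mu$ on the diagonal, in the bounded variation case, together with possibly dense small atoms. The partition trick, splitting off large atoms and applying \eqref{assum_atoms} pointwise, is where the care is needed. In the unbounded variation case $W^{(q)}(0)=0$, and this issue disappears.
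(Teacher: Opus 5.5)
Your proof is correct, and it differs from the paper's mainly in how the contraction is produced.

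Both arguments start from the same step. Moving the diagonal atom to the left-hand side and dividing by $1-W^{(q)}(0)(\nu-\lambda)\{x\}$, which \eqref{assum_atoms} keeps away from zero, turns the equation into one whose integral runs over $[y,x)$.

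The paper then uses only the finitely many large atoms $b_1<\dots<b_j$ as break points. On each stretch between them, it controls the small atoms through the tail sum $\sum_{i>M}p_i$. It controls the diffuse part of $\nu+\lambda$ through a Bielecki-type weight $e^{-s_0(x-z)}$, with $s_0$ chosen via Dini's theorem, so that the weighted operator has norm at most $1/2$. Uniqueness is proved separately, with the measure version of Gronwall's inequality from Ethier--Kurtz.

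You instead cut $[y,T]$ into finitely many open pieces of small $|\nu-\lambda|$-mass. You resolve every partition point explicitly by the division, and use an unweighted sup-norm contraction on each piece, which gives existence and uniqueness at once.

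Your route is more elementary. It needs no atomic/diffuse decomposition, no Dini or continuity argument for the weighted integrals, and no Gronwall. The paper's route gives one contraction per stretch between large atoms, and it sets up the Gronwall-type estimate that the paper reuses in Lemma~\ref{conv_sf} and Lemma~\ref{lemma_one_sided_limit}.

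A minor simplification: your split into large and small atoms is not needed. Because your pieces are open, you can take as partition points the first times the distribution function of $|\nu-\lambda|$ on $[y,T]$ reaches $k\delta$, for $k=1,2,\dots$. This gives open pieces of mass at most $\delta$ for any finite measure, atoms included, since atoms sitting at endpoints are excluded from the open pieces.
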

			\begin{remark}\label{exi_uni_T}
			Note that the inequality $(1-e^{-x})\leq x$ for $x\in\R$, implies that
			\begin{align}\label{ine_T}
				{\mathbf{T}}(\nu)(dz)\leq \nu(dz).
			\end{align}
			Hence, if $\nu(dz)$ is a Radon measure, so is $T(\nu)(dz)$. Moreover, for any interval $[y,T]\subset\R$, we have
				\begin{align}\label{assum_prop_ex_uni}
				\sup_{x\in[y,T]}(1-W^{(q)}(0){\mathbf{T}}(\nu)\{x\})^{-1}&\leq\sup_{i\in\mathbb{N}}\left(1-W^{(q)}(0){\mathbf{d}}(1-e^{-p_i/{\mathbf{d}}})1_{\{a_i\in[y,T]\}}\right)^{-1}\notag\\&=\sup_{i\in\mathbb{N}}e^{p_i/{\mathbf{d}}}1_{a_i\in[y,T]}
                <\infty,
			\end{align}
			where the last inequality follows from the fact that $\nu$ is a Radon measure, and thus $\nu[y,T]<\infty$. 
			 Hence, by Proposition \ref{exist_uni}, there exists a unique solution to the integral equation:
			\begin{align*}
		W_{{\mathbf{T}}(\nu)}^{(q)}(x,y)&=W^{(q)}(x-y)+\int_{[y,x]}W^{(q)}(x-z)W_{{\mathbf{T}}(\nu)}^{(q)}(z,y){\mathbf{T}}(\nu)(dz),\qquad x\geq y.
			\end{align*}
			\end{remark}
			We recall that $\nu_A$ denotes the Revuz measure of the PcNAF $A$, which allows us to express $A$ as a mixture of the local times of the L\'evy process $X$ at different points, weighted by the Revuz measure $\nu_A$, as in \eqref{representation_equation}. In the following result, we provide the solution to the two-sided exit problem for the L\'evy process $X$ killed by the PcNAF $A$.
		\begin{theorem}\label{main_sf}
					Let $q \geq 0$ and $x \in (c,b)$. Then 
			\begin{align}\label{two_sided_up}
				\E_x\left[e^{-q\tau_b^+-A_{\tau_b^+}};\tau_b^+<\tau_c^-\right]=\frac{W_{{\mathbf{T}}(\nu_A)}^{(q)}(x,c)}{W_{{\mathbf{T}}(\nu_A)}^{(q)}(b,c)},
			\end{align}
			where $W_{\mathbf{T}(\nu_A)}$ is the unique solution to
			\begin{align}\label{volterra_sf_w}
				W_{{\mathbf{T}}(\nu_A)}^{(q)}(x,y)&=W^{(q)}(x-y)+\int_{[y,x]}W^{(q)}(x-z)W_{{\mathbf{T}}(\nu_A)}^{(q)}(z,y){\mathbf{T}}(\nu_A)(dz),\qquad x\geq y.
			\end{align}
		\end{theorem}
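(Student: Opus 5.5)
The plan follows the two-step strategy announced in the introduction: first treat finite atomic Revuz measures, then pass to general Radon measures by random atomic approximation.

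\emph{Step 1: finite atomic $\nu_A=\sum_{i=1}^n p_i\delta_{a_i}$ with $c<a_1<\dots<a_n<b$.} By Proposition~\ref{Prfof(2.10)}, $A_t=\sum_i p_i L^{a_i}_t$. I will argue by induction on $n$, using the strong Markov property at $\tau_{a}:=\inf\{t:X_t=a\}$, where $a$ is the largest atom. Since there are no positive jumps, on $\{\tau_b^+<\tau_c^-\}$ the path started from $x<a$ must hit $a$ before $\tau_b^+$. Starting from $a$, I will compute $\E_a[e^{-q\tau_b^+-\,p L^{a}_{\tau_b^+}-\cdots};\tau_b^+<\tau_c^-]$ by excursion theory from $a$; the excursions that return to $a$ before leaving $(c,b)$ are handled by the inductive hypothesis for the remaining atoms.

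\begin{itemize}
\item In the unbounded variation case, $L^a$ is continuous. The exponential killing of the excursion process at rate $p$ per unit local time yields the factor $p$ in $\mathbf{T}$.
\item In the bounded variation case, each visit to $a$ adds $1/\mathbf{d}$ to $L^a$ by \eqref{lt_bv}, i.e.\ a factor $e^{-p/\mathbf{d}}$ per visit. A geometric-series computation shows that the effective killing weight is $\mathbf{d}(1-e^{-p/\mathbf{d}})$, using $W^{(q)}(0)=1/\mathbf{d}$. This is exactly where $\mathbf{T}$ arises.
\end{itemize}

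Writing the resulting ratio as $W(x,c)/W(b,c)$ and matching with \eqref{volterra_sf_w}, where the integral reduces to a finite sum over atoms, I will verify that the candidate built recursively, in the manner of \cite{LZ}, solves the Volterra equation. Uniqueness then follows from Proposition~\ref{exist_uni}. An alternative route for this verification is to check the martingale property of $e^{-qt-A_t}W_{\mathbf{T}(\nu)}(X_t,c)$ stopped at exit, using \eqref{scale_fun_prop}-type convolution identities.

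\emph{Step 2: general $\nu_A$.} I will split off the atomic part, which is at most countable and locally summable, and approximate the measure on $[c,b]$. Take a Poisson random measure $N_n$, independent of $X$, with intensity $n\,\nu^d$, and set $\mu_n=\nu^a_n+\frac1n N_n$, where $\nu^a_n$ keeps finitely many atoms. Each $\mu_n$ is a.s.\ finite atomic on $[c,b]$. Conditionally on $N_n$, Step 1 gives the identity with $A^{(n)}_t=\int L^a_t\,\mu_n(da)$. Then:
\begin{itemize}
\item By joint continuity, or at least boundedness, of $a\mapsto L^a_t$ on the compact range, $\int L^a_t\,\mu_n(da)\to\int L^a_t\,\nu_A(da)$ in probability.
\item By bounded convergence, the left side of \eqref{two_sided_up} converges.
\item On the analytic side, $\mathbf{T}(\mu_n)\to\mathbf{T}(\nu_A)$ vaguely, since small atoms of size $1/n$ give $\mathbf{d}(1-e^{-1/(n\mathbf{d})})\approx 1/n$.
\end{itemize}

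The main obstacle is proving stability of \eqref{volterra_prop} under vague convergence of the driving measure: we need $W_{\mathbf{T}(\mu_n)}(x,c)\to W_{\mathbf{T}(\nu_A)}(x,c)$. I will first try a Gronwall-type iteration on $[c,b]$. It should use the uniform bound on $(1-W^{(q)}(0)\mathbf{T}(\cdot)\{x\})^{-1}$ from Remark~\ref{exi_uni_T}, together with continuity of $W^{(q)}$ away from $0$. The jump of $W^{(q)}$ at $0$ in the bounded variation case requires care near atoms, which is why the atomic part is kept deterministic and only the diffuse part is randomized.

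In the bounded variation case, the randomized diffuse part carries an additional subtlety. The local time $L^a$ charges only levels $a$ actually visited by $X$, so it must be checked that $\int L^a_t\,\frac1n N_n(da)$ still converges to $\int L^a_t\,\nu^d(da)$. I will try to obtain this from the occupation density formula, by taking expectations over $N_n$ first, which gives $\int L^a_t\,\nu^d(da)$ exactly, and then controlling the variance.
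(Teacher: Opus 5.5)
Your overall architecture is the paper's. The finite atomic case is handled by the strong Markov property and the recursion of \cite{LZ} (Theorem \ref{LiZhouProof_thm} and Lemma \ref{sf_lt_lemma}). The general case keeps $\nu_A^a$ deterministic, adds $\frac1n N_n$ with $N_n$ Poisson of intensity $n\nu_A^d$ independent of $X$, and conditions on $N_n$. Folding the truncation of the countably many atoms into $\mu_n$, instead of doing it in a separate step via Lemma \ref{equiv_lemma}, is harmless.

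\textbf{The gap.} The missing piece is the step you yourself call the main obstacle: $W^{(q)}_{\mathbf{T}(\mu_n)}(\cdot,c)\to W^{(q)}_{\mathbf{T}(\nu_A)}(\cdot,c)$. This is the technical core of the paper's proof (Lemma \ref{conv_sf}), and you leave it as ``I will first try a Gronwall-type iteration''. Moreover, the principle you aim for is false in the bounded variation case, namely stability of \eqref{volterra_prop} under vague convergence of the driving measure. The reason is that the kernel $W^{(q)}$ jumps at $0$.

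Here is a counterexample. Take $y<a$, $0<2m<\mathbf{d}$ and $\lambda_n=m(\delta_a+\delta_{a+1/n})$, which converges vaguely to $2m\delta_a$. Solving the equation atom by atom gives $W^{(q)}_{\lambda_n}(a+1/n,y)\to(1-m/\mathbf{d})^{-2}W^{(q)}(a-y)$. In contrast, $W^{(q)}_{2m\delta_a}(a,y)=(1-2m/\mathbf{d})^{-1}W^{(q)}(a-y)$. Consequently $W^{(q)}_{\lambda_n}(x,y)\not\to W^{(q)}_{2m\delta_a}(x,y)$ for every $x>a$. This is exactly the nonlinearity that $\mathbf{T}$ encodes. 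So vague convergence of $\mathbf{T}(\mu_n)$ cannot be the mechanism; the argument must use that the random atoms have mass $O(1/n)$ and that $\nu_A^d$ is diffuse.

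\textbf{How the paper closes it.} The paper proceeds in four steps:
\begin{enumerate}
\item It rewrites \eqref{volterra_approx} in predictable form, with $W^{(q)}_{\mathbf{T}(\nu_n)}(z-,y)$ integrated against $N_n$ (see \eqref{pred_volterra}).
\item It obtains a uniform bound on $\E[(W^{(q)}_{\mathbf{T}(\nu_n)}(x,y))^2]$ by localization and Gronwall (see \eqref{bound_gron}).
\item It compensates $N_n$ and uses the isometry \eqref{square_integrable}, Doob's inequality and Gronwall to get $L^2$-convergence of the supremum.
\item It passes to an a.s.\ convergent subsequence and concludes by dominated convergence, using $W(x,c)/W(b,c)\le 1$ from Lemma \ref{lemma_mon}.
\end{enumerate}

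\textbf{A deterministic alternative.} Your pathwise Gronwall plan can be made to work, but only with extra structure that your proposal does not contain.
\begin{enumerate}
\item Subtract the equation for $W^{(q)}_{\mathbf{T}(\nu_A)}$. The forcing term is then $\int_{[c,x]}W^{(q)}(x-z)W^{(q)}_{\mathbf{T}(\nu_A)}(z,c)(\mathbf{T}(\mu_n)-\mathbf{T}(\nu_A))(dz)$. Its integrand is fixed and of bounded variation in $z$, but discontinuous at $z=x$ and at the atoms of $\nu_A^a$.
\item Show that this term tends to $0$ uniformly in $x\in[c,b]$. Use $\sup_{z\in[c,b]}|\kappa_nN_n[c,z]-\nu_A^d[c,z]|\to0$ in probability, where $\kappa_n=\mathbf{d}(1-e^{-1/(n\mathbf{d})})$; Pólya's theorem applies because the limit is continuous. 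Combine this with an integration by parts.
\item Apply Gronwall, using the atom bound of Remark \ref{exi_uni_T} and $\mathbf{T}(\mu_n)[c,b]\le\nu_A[c,b]+\frac1nN_n[c,b]$.
\end{enumerate}

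\textbf{The local-time side.} Joint continuity of $a\mapsto L^a_t$ is not available in general. In the bounded variation case it is $\mathbf{d}^{-1}$ times a counting function, which has jumps. Your fallback via the conditional mean and variance given $X$ is viable; note the mean comes from Campbell's formula, not the occupation density formula. It does require checking that $\int (L^a_{\tau_b^+})^2\nu_A^d(da)<\infty$ a.s. The paper's route is cleaner. Conditionally on $X$,
\[
\E\bigl[e^{-\int g\,dN_n/n}\bigr]=\exp\Bigl\{-\int(1-e^{-g/n})\,n\,\nu_A^d\Bigr\},
\]
and $(1-e^{-g/n})n\le g$. Dominated convergence then gives Lemma \ref{new_convergence_lemma} with no continuity or moment assumptions.
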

		For the next result we define for fixed $c\in\R$, $q\geq0$ and a Radon measure $\nu$,  
		\begin{align}\label{sf_Z}
			Z^{(q)}_{\nu}(x,c):=1+\int_{[c,x]}W^{(q)}_{\nu}(x,y)\nu(dy)+q\int_c^xW^{(q)}_{\nu}(x,y)dy,\qquad x,c\in\R.
		\end{align}
		We now provide a solution to the two-sided exit problem downwards for the L\'evy process $X$ killed by the PcNAF $A$.
		\begin{theorem}\label{two_sided_down_thm}
			Let $q \geq 0$ and $x \in (c,b)$. Then
			\begin{align}\label{two_sided_down_iden}
				\E_x\left[e^{-q\tau_c^--A_{\tau_c^-}};\tau_c^-<\tau_b^+\right]=Z^{(q)}_{{\mathbf{T}}(\nu_A)}(x,c)-\frac{Z^{(q)}_{{\mathbf{T}}(\nu_A)}(b,c)}{W_{{\mathbf{T}}(\nu_A)}^{(q)}(b,c)}W_{{\mathbf{T}}(\nu_A)}^{(q)}(x,c),\qquad c\leq x\leq b,
			\end{align}
			where $Z^{(q)}_{{\mathbf{T}}(\nu_A)}$ is defined in \eqref{sf_Z} with $\nu={\mathbf{T}}(\nu_A)$. Moreover, $Z^{(q)}_{{\mathbf{T}}(\nu_A)}$ is the unique solution to the following integral equation,
			\begin{align}\label{volterra_sf_Z_new}
				Z^{(q)}_{{\mathbf{T}}(\nu_A)}(x,c)=Z^{(q)}(x-c)+\int_{[c,x]}W^{(q)}(x-u)Z^{(q)}_{{\mathbf{T}}(\nu_A)}(u,c){\mathbf{T}}(\nu_A)(du),\qquad x\geq c.
			\end{align}
		\end{theorem}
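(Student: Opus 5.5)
Theorem~\ref{two_sided_down_thm} contains two claims: the exit identity \eqref{two_sided_down_iden}, and the characterization of $Z^{(q)}_{\mathbf{T}(\nu_A)}$ by the Volterra equation \eqref{volterra_sf_Z_new}. I would prove the integral equation first, since it is purely analytic, and then derive the exit identity by the same two-step scheme used for Theorem~\ref{main_sf}: first atomic measures, then a Poisson approximation.

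\emph{Step 1: the Volterra equation.} Write $\mu=\mathbf{T}(\nu_A)$. Substitute the definition \eqref{sf_Z} of $Z^{(q)}_\mu(u,c)$ into the right-hand side of \eqref{volterra_sf_Z_new} and apply Fubini to the double integrals. This produces the terms
\[
\int_{[c,x]}\int_{[y,x]}W^{(q)}(x-u)W^{(q)}_\mu(u,y)\,\mu(du)\,\mu(dy)
\quad\text{and}\quad
q\int_c^x\int_{[y,x]}W^{(q)}(x-u)W^{(q)}_\mu(u,y)\,\mu(du)\,dy .
\]
The inner integrals can be replaced using \eqref{volterra_sf_w}, namely $\int_{[y,x]}W^{(q)}(x-u)W_\mu^{(q)}(u,y)\mu(du)=W^{(q)}_\mu(x,y)-W^{(q)}(x-y)$. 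The terms involving $W^{(q)}(x-y)$ then combine with $Z^{(q)}(x-c)=1+q\int_c^xW^{(q)}(x-y)\,dy$ to give exactly $Z^{(q)}_\mu(x,c)$. Some care is needed at the diagonal when $\mu$ has atoms in the bounded variation case, since the intervals are closed at both ends. Uniqueness comes from Proposition~\ref{exist_uni} with $h=Z^{(q)}$, $\nu=\mu$ and $\lambda=0$; its hypothesis \eqref{assum_atoms} is guaranteed by Remark~\ref{exi_uni_T}.

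\emph{Step 2: finite atomic Revuz measure.} Let $\nu_A=\sum_{i=1}^n p_i\delta_{a_i}$, so that $A=\sum_i p_iL^{a_i}$. I would proceed by induction on the number of atoms in $(c,b)$ via the strong Markov property. With $a$ the lowest atom, consider the first passage of $X$ through $a$. Below $a$ the process is killed only by $e^{-qt}$, so the classical identities \eqref{laplace_in_terms_of_z} and \eqref{resolvent} apply on $[c,a)$. At the level $a$ there are two cases.
\begin{itemize}
\item \emph{Bounded variation:} each visit to $a$ multiplies the discount by $e^{-p/\mathbf{d}}$. The resulting geometric series over returns to $a$ before exiting produces the factor $\mathbf{d}(1-e^{-p/\mathbf{d}})$.
\item \emph{Unbounded variation:} the local time at $a$ up to exit from $(c,b)$ is exponentially distributed under $\p_a$. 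Its rate is computed from $W^{(q)}$ via excursion theory, as in \cite{LZ}.
\end{itemize}
In both cases one obtains a linear relation showing that the candidate right-hand side of \eqref{two_sided_down_iden} satisfies the same recursion. Equivalently, one can verify that $x\mapsto Z_\mu(x,c)-\frac{Z_\mu(b,c)}{W_\mu(b,c)}W_\mu(x,c)$ solves the one-atom perturbation identity and then iterate. Presumably this is what the paper does in Section~\ref{sec:prelimiary_results} for $W$, so I would reuse it, using Step 1 to pass between the forms of $Z$.

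\emph{Step 3: general Radon measures.} Approximate $\nu_A$ by random atomic measures $\nu_n$ built from Poisson random measures with intensity $n\nu_A$ and atoms of mass $1/n$, taken independent of $X$. The corresponding functionals $A^n=\int L^a\,\nu_n(da)$ satisfy $A^n_t\to A_t$ in probability, using joint continuity (or occupation density) of local times together with the representation of Proposition~\ref{Prfof(2.10)}. The left-hand sides then converge by bounded convergence. On the analytic side, one needs $\mathbf{T}(\nu_n)\to\mathbf{T}(\nu_A)$ in a sense strong enough that the solutions of the Volterra equations converge. Note that the atoms of $\nu_n$ have size $1/n$, so $\mathbf{T}(\nu_n)\approx\nu_n$, while atoms of $\nu_A$ must be handled separately to recover $\mathbf{T}$ on them.

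This stability step is the main obstacle. In the bounded variation case the kernel $W^{(q)}$ jumps at $0$, which rules out plain weak convergence. I would first try a Gronwall-type bound for \eqref{volterra_prop} that is uniform in the total mass on $[c,b]$, combined with almost-sure convergence of $\int_{[y,x]} f\,d\nu_n$ for functions $f$ continuous except at $x$. Since $Z_\mu$ is built from $W_\mu$, its convergence follows from that of $W_\mu$ together with Step 1.
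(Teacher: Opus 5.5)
Your Steps 1 and 2 are essentially the paper's. Step 1 is the Fubini computation of Lemma~\ref{sf_Z_volterra}. Step 2 is the strong-Markov induction of Appendix~\ref{LiZhouProof} (done there on the highest atom), combined with Lemma~\ref{sf_lt_lemma} and uniqueness.

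The gap is in Step 3. Beyond the atomic case, the whole content of the theorem is the convergence of the random solutions $W^{(q)}_{\mathbf{T}(\nu_n)}(\cdot,c)\to W^{(q)}_{\mathbf{T}(\nu_A)}(\cdot,c)$, and of the corresponding $Z$'s. You leave this open, and the tool you sketch does not close it as stated, for two reasons.
\begin{enumerate}
\item In the bounded variation case, $W^{(q)}_{\mathbf{T}(\nu_A)}(z,y)=e^{p/\mathbf{d}}\,W^{(q)}_{\mathbf{T}(\nu_A)}(z-,y)$ at every atom $z$ of $\nu_A$ of mass $p$. These atoms may be dense, so the integrands $z\mapsto W^{(q)}(x-z)W^{(q)}_{\mathbf{T}(\nu_A)}(z,y)$ are not ``continuous except at $x$''.
\item A Gronwall argument needs the error term small uniformly in $x\in[c,b]$. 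Convergence of $\int f\,d\nu_n$ for each fixed $f$ does not by itself give this.
\end{enumerate}

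The missing ingredient in the paper is a martingale structure in the space variable. It puts only $\nu_A^d$ into the Poisson measure and keeps $\nu_A^a$ deterministic. Then a.s.\ the Poisson atoms have unit mass and avoid the atoms of $\nu_A^a$, so $W^{(q)}_{\mathbf{T}(\nu_n)}(z,y)=e^{1/(n\mathbf{d})}W^{(q)}_{\mathbf{T}(\nu_n)}(z-,y)$ at Poisson atoms. Solving out the atom at $x$ with the deterministic factor $K_a(x)$ gives the predictable form \eqref{pred_volterra}. There the Poisson term is an integral of $W^{(q)}_{\mathbf{T}(\nu_n)}(z-,y)$ against $N_n$ in the filtration $(\mathcal{G}^n_x)$. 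Three estimates follow:
\begin{itemize}
\item the compensated part is $O(n^{-1})$ in $L^2$ by \eqref{square_integrable} and Doob's inequality;
\item the compensator differs from $\nu_A^d$ only by the factor $n\mathbf{d}(e^{1/(n\mathbf{d})}-1)\to 1$;
\item Gronwall, after the localized a priori bound \eqref{bound_gron}, gives Lemma~\ref{conv_sf}.
\end{itemize}
The paper then extracts an a.s.\ convergent subsequence and concludes by dominated convergence, using $W^{(q)}_{\mathbf{T}(\nu_n)}(x,c)/W^{(q)}_{\mathbf{T}(\nu_n)}(b,c)\le 1$ from Lemma~\ref{lemma_mon}.

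Two smaller points.

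First, the treatment of atoms. If you keep the atoms of $\nu_A$ deterministic, as your last remark suggests, $\nu_n$ can have infinitely many atoms in $[c,b]$, and your Step 2 (finitely many atoms) does not apply. The paper first extends the atomic identity to countable atomic Radon measures by truncation, using Lemma~\ref{equiv_lemma} to get uniform convergence of the truncated scale functions. Taking intensity $n\nu_A$ literally avoids this, since $N_n[c,b]<\infty$ a.s. But then the masses at atoms of $\nu_A$ are $\mathrm{Poisson}(np)/n$, on which $\mathbf{T}$ acts nonlinearly. The factor $(1-W^{(q)}(0)\mathbf{T}(\nu_n)\{x\})^{-1}$ becomes random and non-predictable at those points, so they would need separate treatment.

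Second, the left-hand side. Joint continuity of $a\mapsto L^a_t$ is not available in general; in the bounded variation case $L^a_t$ is a visit count divided by $\mathbf{d}$. The robust argument is that of Lemma~\ref{new_convergence_lemma}. Conditionally on $X$, the Laplace functional of $N_n$ gives $\exp\{-\int(1-e^{-L^y_\tau/n})\,n\,\nu_A^d(dy)\}\to e^{-\int L^y_\tau\,\nu_A^d(dy)}$ by dominated convergence (with $\nu_A$ in place of $\nu_A^d$ in your version). This needs no regularity of local times in the space variable.
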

		{We now give an expression for the} resolvent measure of the L\'evy process $X$ killed by the PcNAF $A$ in terms of the scale function $W_{\mathbf{T}(\nu_A)}$. This is the content of the next result.
\begin{theorem}\label{resol_ts}
	Let $q \geq 0$, $x \in (c,b)$, and let $f:\R \to \R$ be a bounded, non-negative measurable function. Then
	\begin{align}\label{resolvent_iden}
		\E_x\left[\int_0^{\tau_b^+\wedge\tau_c^-} e^{-qt-A_t} f(X_t)\,dt\right]
		= \int_c^b f(y)\left(
		\frac{W_{{\mathbf{T}}(\nu_A)}^{(q)}(x,c)}{W_{{\mathbf{T}}(\nu_A)}^{(q)}(b,c)}
		W_{{\mathbf{T}}(\nu_A)}^{(q)}(b,y)
		- W_{{\mathbf{T}}(\nu_A)}^{(q)}(x,y)
		\right)dy.
	\end{align}
\end{theorem}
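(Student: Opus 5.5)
We prove the resolvent identity \eqref{resolvent_iden} along the same two-stage scheme used for Theorems \ref{main_sf} and \ref{two_sided_down_thm}. The first stage treats finite atomic Revuz measures, where $A$ is a finite combination of local times. The second stage handles a general Radon $\nu_A$ by Poisson approximation and a stability result for the Volterra equation \eqref{volterra_prop}. As a first reduction, by monotone class it suffices to take $f=1_{[c,y_0]}$, or any bounded continuous non-negative $f$.

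\emph{Step 1 (finitely many atoms).} Let $\nu_A=\sum_{i=1}^n p_i\delta_{a_i}$ with $c<a_1<\dots<a_n<b$, so $A_t=\sum_i p_iL^{a_i}_t$ by Proposition \ref{Prfof(2.10)}. We induct on $n$. The base case $n=0$ is the classical identity \eqref{resolvent}.

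For the induction step, write $\nu_A=\nu'+p\delta_a$ with $a$ the largest atom. Fix $x$ and split the time interval at $\tau_{\{a\}}:=\inf\{t:X_t=a\}$. The contribution from $[0,\tau_{\{a\}}\wedge\tau_b^+\wedge\tau_c^-)$ involves only $\nu'$. If $x<a$, then because there are no positive jumps, $X$ must reach $a$ before exceeding $b$. The pre-$\tau_{\{a\}}$ part is then the resolvent on $(c,a)$ for the $\nu'$-killed process, which is known by induction. Next use the strong Markov property at $\tau_{\{a\}}$. Starting from $a$, excursions away from $a$ are killed at rate $p$ per unit local time. Following \cite{LZ}, the quantity $u(a):=\E_a[\int_0^{\tau_b^+\wedge\tau_c^-}\cdots]$ solves a linear equation. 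In the unbounded variation case one obtains it by comparing with the process without the atom, perturbed via the local time. In the bounded variation case, each visit to $a$ multiplies by $e^{-p/\mathbf d}$. This is exactly where the factor $\mathbf d(1-e^{-p/\mathbf d})$ of $\mathbf T$ arises.

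Solving, we get the right-hand side of \eqref{resolvent_iden} with $W_{\mathbf T(\nu_A)}$ built from $W_{\mathbf T(\nu')}$ by one extra Volterra correction term $\mathbf d(1-e^{-p/\mathbf d})W^{(q)}(x-a)W_{\mathbf T(\nu')}(a,y)$. This is precisely the recursion \eqref{volterra_sf_w} restricted to one additional atom. Algebraically, this mirrors the proof of Theorem \ref{main_sf} for atoms, and we reuse those computations together with the expression $\E_x[e^{-q\tau_{\{a\}}-A_{\tau_{\{a\}}}};\tau_{\{a\}}<\tau_b^+\wedge\tau_c^-]$, which follows from \eqref{two_sided_up}.

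\emph{Step 2 (general $\nu_A$).} Let $N_n$ be a Poisson random measure independent of $X$ with intensity $n\,\nu_A|_{[c,b]}$, and set $\nu_n=\frac1n N_n$, which is a.s. finite and atomic. Condition on $N_n$ and apply Step 1 with $A^n_t=\int L^a_t\nu_n(da)$. We then need two limits.

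First, $A^n_t\to A_t$ in a sense allowing the limit under $\E_x$ on the left-hand side. By the law of large numbers for Poisson measures, $\int L^a_t\nu_n(da)\to\int L^a_t\nu_A(da)$, since $a\mapsto L^a_t$ is continuous in the unbounded variation case and of compact support. Bounded convergence then applies because $e^{-A}\le1$. The atoms of $\nu_A$ need separate care: there $\nu_n$ has approximately Poisson-distributed mass, and the $\mathbf T$-transformation is not linear. We handle this by approximating only the diffuse part by Poisson and keeping the atoms (truncated to finitely many) deterministic.

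Second, $W_{\mathbf T(\nu_n)}(x,y)\to W_{\mathbf T(\nu_A)}(x,y)$. This follows from a stability lemma for \eqref{volterra_prop}: if $\mu_n\to\mu$ vaguely on $[y,T]$ with uniformly controlled atoms, then the solutions converge pointwise. We prove the lemma via Grönwall-type iteration of the Volterra kernel, using the continuity of $W^{(q)}$ on $(0,\infty)$.

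The integrand in $y$ is bounded uniformly in $n$, so dominated convergence passes the limit through $\int_c^b f(y)\cdots dy$. The main obstacle is this convergence step. Near the diagonal $z=x$, $W^{(q)}(x-z)$ jumps in the bounded variation case, so vague convergence must be combined with control of masses near atoms. It is also essential that the resulting limit is the resolvent driven by $\mathbf T(\nu_A)$ rather than by $\nu_A$. I would first try to establish the stability lemma in total-variation-on-compacts form for the atomic part, and vaguely for the diffuse part, since $W^{(q)}(x-\cdot)$ is continuous off a Lebesgue- and $\nu^d$-null set.
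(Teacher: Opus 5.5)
Your architecture is the paper's. Step~1 is Theorem~\ref{LiZhouProof_thm} (proved in Appendix~\ref{LiZhouProof} by this strong Markov recursion) combined with Lemma~\ref{sf_lt_lemma}. Step~2, after your self-correction, is the paper's approximation $\nu_n=\nu_A^a+\frac1nN_n$ with $N_n$ of intensity $n\nu_A^d$, conditioned on $N_n$.

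Three details in these parts need repair.
\begin{itemize}
\item Under bounded variation the one-atom correction is $\mathbf{d}(e^{p/\mathbf{d}}-1)W^{(q)}(x-a)W^{(q)}_{\mathbf{T}(\nu')}(a,y)$, cf.\ \eqref{volterra_lt}. Your factor $\mathbf{d}(1-e^{-p/\mathbf{d}})$ is correct only in front of $W^{(q)}_{\mathbf{T}(\nu_A)}(a,y)=e^{p/\mathbf{d}}W^{(q)}_{\mathbf{T}(\nu')}(a,y)$.
\item For $x>a$, the piece before the hitting time of $\{a\}$ is not covered by the induction hypothesis. $X$ can jump below $a$ and accumulate $\nu'$-local time before returning. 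The paper splits at $\tau_a^-$ instead and uses \eqref{lema2.1}.
\item On the left-hand side, continuity of $a\mapsto L^a_t$ is false under bounded variation and not automatic under unbounded variation (Barlow's criterion governs joint continuity). It is also not needed. As in Lemma~\ref{new_convergence_lemma}, independence and the Laplace functional $\E[e^{-\frac1n\int g\,dN_n}]=\exp\{-\int(1-e^{-g(z)/n})\,n\,\nu_A^d(dz)\}$ with $g(z)=L^z_t$ give the limit using only $\int L^z_t\,\nu_A^d(dz)<\infty$.
\end{itemize}

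The genuine difference, and the real gap, is the convergence of scale functions. The paper (Lemma~\ref{conv_sf}) writes \eqref{volterra_approx} in predictable form, compensates $N_n$, and applies Gronwall to second moments. This gives $\E[\sup_{x\in[y,T]}|W^{(q)}_{\mathbf{T}(\nu_n)}(x,y)-W^{(q)}_{\mathbf{T}(\nu_A)}(x,y)|^2]\to0$ with a rate, and also supplies the integrability needed to pass the limit through the expectation over $N_n$.

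Your deterministic stability lemma would be more reusable, but it is left unproved, and as literally stated it is false. Under bounded variation, $\mu_n=p\,\delta_{a+1/n}\to\mu=p\,\delta_a$ vaguely with $pW^{(q)}(0)<1$, yet $W^{(q)}_{\mu_n}(a,y)=W^{(q)}(a-y)\neq W^{(q)}(a-y)/(1-pW^{(q)}(0))=W^{(q)}_{\mu}(a,y)$.

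The workable version is the one you hint at in your last sentence. Keep the atoms fixed (truncated), and let only $\rho_n:=\mathbf{T}(\frac1nN_n)$, whose atoms have mass at most $1/n$, converge weakly to $\nu_A^d$ on $[y,T]$ (a.s.\ along a subsequence). Set $D_n(x):=W^{(q)}_{\mathbf{T}(\nu_A)}(x,y)-W^{(q)}_{\mathbf{T}(\nu_n)}(x,y)$. Lemma~\ref{equiv_lemma} then gives
\[
D_n(x)=\epsilon_n(x)-\int_{[y,x]}W^{(q)}_{\mathbf{T}(\nu_A)}(x,u)\,D_n(u)\,(\nu_A^d-\rho_n)(du),\qquad \epsilon_n(x):=\int_{[y,x]}W^{(q)}_{\mathbf{T}(\nu_A)}(x,u)\,W^{(q)}_{\mathbf{T}(\nu_A)}(u,y)\,(\nu_A^d-\rho_n)(du).
\]
Because the Gronwall measure $\nu_A^d+\rho_n$ moves with $n$, you need $\sup_{x\in[y,T]}|\epsilon_n(x)|\to0$, not merely $\epsilon_n(x)\to0$ for each $x$. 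This can be obtained from the monotonicity of $x\mapsto W^{(q)}_{\mathbf{T}(\nu_A)}(x,u)$ (Lemma~\ref{lemma_mon}) and a P\'olya-type argument between the finitely many retained atoms, but it has to be carried out.

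Two loose ends remain.
\begin{itemize}
\item The integrand in $y$ is not bounded uniformly in $n$ pathwise by the Volterra estimates, since $\nu_n[c,b]$ is unbounded in $\omega$. One fix is to use $e^{-A^n_t}\le 1$ to dominate the killed resolvent density by the classical one in \eqref{resolvent} for a.e.\ $y$. Another is to use exponential moments of $N_n[c,b]$.
\item Since you truncate $\nu_A^a$, you must remove the truncation at the end. The monotone argument behind \eqref{lim_atomic}, plus monotone convergence on the left-hand side, does this.
\end{itemize}

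With these repairs your pathwise route is a valid alternative. The paper's martingale estimate delivers uniformity, a rate and integrability at once. Yours avoids stochastic integration against $\tilde N_n$, at the cost of handling the atoms and the uniformity in $x$ by hand.
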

		{
		\begin{remark}
	Note that, for the PcNAF \(A^{\omega}\) defined in \eqref{Af}, the occupation density formula
	(see (2) in Chapter V of \cite{B} and (1.5) in \cite{FP}) implies that its associated Revuz
	measure is
	\[
	\nu_{A^{\omega}}(dz) = \omega(z)\,dz.
	\]
	Since \(\nu_{A^{\omega}}\) is diffuse, we have \(\mathbf{T}(\nu_{A^{\omega}})=\omega(z)\,dz\).
	By \eqref{volterra_sf_w} and \eqref{volterra_sf_Z_new}, the associated scale functions
	\(W^{(q)}_{\mathbf{T}(\nu_{A^{\omega}})}\) and
	\(Z^{(q)}_{\mathbf{T}(\nu_{A^{\omega}})}\) are the unique solutions of the following integral
	equations, for \(x \ge y\):
	\begin{align*}
		W^{(q)}_{\mathbf{T}(\nu_{A^{\omega}})}(x,y)
		&= W^{(q)}(x-y)
		+ \int_{[y,x]} W^{(q)}(x-z)\,
		W^{(q)}_{\mathbf{T}(\nu_{A^{\omega}})}(z,y)\,\omega(z)\,dz, \\
		Z^{(q)}_{\mathbf{T}(\nu_{A^{\omega}})}(x,c)
		&= Z^{(q)}(x-c)
		+ \int_{[c,x]} W^{(q)}(x-z)\,
		Z^{(q)}_{\mathbf{T}(\nu_{A^{\omega}})}(z,c)\,\omega(z)\,dz.
	\end{align*}
	As a consequence, Theorems \ref{main_sf}, \ref{two_sided_down_thm}, and \ref{resol_ts}
	generalize Theorems~2.1 and~2.2 of \cite{LP}.
		\end{remark}}
		\subsection{One-sided exit problem upwards}\label{one_sided_section}
		Fix $d\in\R$. Throughout this section we will assume that the PcNAF $A$ satisfies that
		\[
		A_t=\int_0^t1_{\{X_s\geq d\}}dA^1_s+\int_0^t\eta1_{\{X_s<d\}}ds,\qquad t>0,
		\]
		where $A^1$ is a PcNAF with Revuz measure $\nu_{A^1}$. In this case, the Revuz measure $\nu_A$, associated to the PcNAF $A$,  can be written as 
		\begin{equation}\label{revu_dec_one_sided}
			\nu_A(dz)=1_{\{z\geq d\}}\nu_A^1(dz)+\eta1_{\{z<d\}}dz.
		\end{equation}
		We now provide the following auxiliary result.
		\begin{lemma}\label{lemma_one_sided_limit}
			For $q\geq 0$, let $W_{{\mathbf{T}}(\nu_A)}^{(q)}(\cdot,c)$ be the unique solution to \eqref{volterra_sf_w}. Then, as $c\to -\infty$, the function
			$
			\frac{W_{{\mathbf{T}}(\nu_A)}^{(q)}(\cdot,c)}{W^{(\eta+q)}(-c)}
			$
			converges pointwise to a function $u^{(q)}_{{\mathbf{T}}(\nu_A)}$, which is the unique solution to the integral equation
			\begin{align}\label{fun_u}
				u^{(q)}_{{\mathbf{T}}(\nu_A)}(x)
				=
				e^{\Phi(\eta+q)x}
				+\int_{[d,x]}W^{(\eta+q)}(x-z)u^{(q)}_{{\mathbf{T}}(\nu_A)}(z)\left({\mathbf{T}}(\nu_A)(dz)-\eta\,dz\right),
				\qquad x\in\mathbb{R}.
			\end{align}
		\end{lemma}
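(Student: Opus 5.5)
Throughout, write $\mu:={\mathbf{T}}(\nu_A)$. By \eqref{revu_dec_one_sided}, $\mu=1_{\{z\ge d\}}{\mathbf{T}}(\nu_A^1)(dz)+\eta 1_{\{z<d\}}dz$, because the part of $\nu_A$ below $d$ is diffuse. The plan is to reduce to a problem where the killing rate $\eta$ below $d$ is absorbed into the scale function $W^{(\eta+q)}$, and then pass to the limit $c\to-\infty$.

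\textbf{Step 1: rewrite the equation with respect to $W^{(\eta+q)}$.} Fix $c<d$. I would show that $W^{(q)}_\mu(\cdot,c)$ also solves
\begin{align*}
W^{(q)}_\mu(x,c)=W^{(\eta+q)}(x-c)+\int_{[d,x]}W^{(\eta+q)}(x-z)W^{(q)}_\mu(z,c)\,(\mu-\eta\,dz)(dz),\qquad x\ge c.
\end{align*}
The argument runs as follows. Write $\mu=\eta\,dz+(\mu-\eta\,dz)$, where the second measure is carried by $[d,\infty)$. Apply the operator $f\mapsto f+\eta\int W^{(\eta+q)}(\cdot-z)f(z)dz$ to \eqref{volterra_sf_w}, and use \eqref{scale_fun_prop} with $\lambda=\eta$ to collapse the convolution $W^{(q)}+\eta W^{(\eta+q)}*W^{(q)}=W^{(\eta+q)}$.

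This is the standard resolvent-identity manipulation. The double integrals can be swapped by Fubini, since all kernels are locally bounded and the measures are Radon on compacts. Uniqueness of the resulting equation follows from Proposition~\ref{exist_uni}, applied with $\nu=\mu|_{[d,\infty)}$ and $\lambda=\eta\,dz|_{[d,\infty)}$. Condition \eqref{assum_atoms} holds by Remark~\ref{exi_uni_T}, because $\lambda$ has no atoms.

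\textbf{Step 2: normalise and take the limit.} Set $g_c(x):=W^{(q)}_\mu(x,c)/W^{(\eta+q)}(-c)$. Dividing the equation from Step 1 by $W^{(\eta+q)}(-c)$ gives
\begin{align*}
g_c(x)=\frac{W^{(\eta+q)}(x-c)}{W^{(\eta+q)}(-c)}+\int_{[d,x]}W^{(\eta+q)}(x-z)g_c(z)\,(\mu-\eta\,dz)(dz).
\end{align*}
By \eqref{W_q_limit}, the forcing term $h_c(x):=W^{(\eta+q)}(x-c)/W^{(\eta+q)}(-c)$ converges to $e^{\Phi(\eta+q)x}$ for every $x$.

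The forcing terms are also locally uniformly bounded in $c$. Monotonicity in \eqref{W_q_limit} gives $h_c(x)\le e^{\Phi(\eta+q)x}\,\psi'(\Phi(\eta+q))^{-1}/\big(e^{-\Phi(\eta+q)(-c)}W^{(\eta+q)}(-c)\big)$. The denominator increases in $-c$, so for $-c\ge 1$ this bound is uniform in $c$.

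\textbf{Step 3: stability of the Volterra equation.} For $x<d$ the integral vanishes, so $g_c=h_c$ there and convergence is immediate. On $[d,T]$, set $\rho:=\mu-\eta\,dz$ and $\Delta_c:=g_c-u$, where $u$ is the unique solution to \eqref{fun_u}. Existence and uniqueness of $u$ on each $[d,T]$ come from Proposition~\ref{exist_uni} with $h(x)=e^{\Phi(\eta+q)x}$, and $u$ is extended by $e^{\Phi(\eta+q)x}$ below $d$. The difference then satisfies
\begin{align*}
\Delta_c(x)=\big(h_c(x)-e^{\Phi(\eta+q)x}\big)+\int_{[d,x]}W^{(\eta+q)}(x-z)\Delta_c(z)\rho(dz).
\end{align*}

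\textbf{Main obstacle.} The hard step is a continuous-dependence estimate for Volterra equations driven by a signed Radon measure with atoms: the solution should be bounded by a constant times the supremum of the forcing on $[d,T]$. I would prove this with a Gronwall-type iteration. Use $|W^{(\eta+q)}(x-z)|\le W^{(\eta+q)}(T-d)$ and bound by the total variation $|\rho|$. Handle the atoms by splitting off the diagonal term $W^{(\eta+q)}(0)\rho\{x\}$, which is controlled by \eqref{assum_atoms} through \eqref{assum_prop_ex_uni}. The point is that the same a priori bound used in the proof of Proposition~\ref{exist_uni} gives the linear estimate $\sup_{[d,T]}|\Delta_c|\le C_T\sup_{[d,T]}|h_c-e^{\Phi(\eta+q)\cdot}|$.

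The convergence $h_c\to e^{\Phi(\eta+q)\cdot}$ is actually uniform on compacts: $e^{-\Phi(\eta+q)y}W^{(\eta+q)}(y)$ converges monotonically, and $h_c(x)=e^{\Phi(\eta+q)x}\cdot\frac{e^{-\Phi(\eta+q)(x-c)}W^{(\eta+q)}(x-c)}{e^{-\Phi(\eta+q)(-c)}W^{(\eta+q)}(-c)}$. Alternatively, dominated convergence inside the integral suffices, using the uniform bound from Step 2. Either way $\Delta_c\to0$ pointwise, which identifies the limit as $u^{(q)}_{{\mathbf{T}}(\nu_A)}$.
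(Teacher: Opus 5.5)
Your proposal is correct and follows essentially the same route as the paper. You rewrite \eqref{volterra_sf_w} with the kernel $W^{(\eta+q)}$ and the signed measure $\mathbf{T}(\nu_A)-\eta\,dz$, which is carried by $[d,\infty)$; the paper gets this from Lemma~\ref{equiv_lemma} with $\nu_2=\eta\,dz$ and \eqref{scale_fun_prop}, while you verify it by a direct resolvent computation. You then split off the diagonal atom through $K(x)=(1-W^{(\eta+q)}(0)\mathbf{T}(\nu_A)\{x\})^{-1}$ and apply Gronwall to $W^{(q)}_{\mathbf{T}(\nu_A)}(\cdot,c)/W^{(\eta+q)}(-c)-u^{(q)}_{\mathbf{T}(\nu_A)}$, as the paper does. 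Your observation that $W^{(\eta+q)}(x-c)/W^{(\eta+q)}(-c)\to e^{\Phi(\eta+q)x}$ uniformly on compacts is a worthwhile tightening: Gronwall's inequality needs the supremum of the forcing over $[d,T]$, not the pointwise value the paper uses.
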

		With the previous auxiliary result we now provide the solution to the one-sided exit problem upwards along with the {associated} resolvent. 
		\begin{proposition}\label{one_sided_upwards}
			Let $x \leq b$ and $q \geq 0$. Then
			\begin{align}\label{one_sided_iden_down}
				\E_x\left[e^{-q\tau_b^+-A_{\tau_b^+}};\,\tau_b^+<\infty\right]
				= \frac{u^{(q)}_{{\mathbf{T}}(\nu_A)}(x)}{u^{(q)}_{{\mathbf{T}}(\nu_A)}(b)}.
			\end{align}
			Moreover, for any bounded, non-negative measurable function $f:\R\to\R$,
			\begin{align}\label{one_sided_resolvent_down}
				\E_x\left[\int_0^{\tau_b^+} e^{-qt-A_t} f(X_t)\,dt\right]
				= \int_{-\infty}^b f(y)\left(
				\frac{u_{{\mathbf{T}}(\nu_A)}^{(q)}(x)}{u_{{\mathbf{T}}(\nu_A)}^{(q)}(b)}
				W_{{\mathbf{T}}(\nu_A)}^{(q)}(b,y)
				- W_{{\mathbf{T}}(\nu_A)}^{(q)}(x,y)
				\right)dy,
			\end{align}
			where $W_{{\mathbf{T}}(\nu_A)}^{(q)}$ and $u^{(q)}_{{\mathbf{T}}(\nu_A)}$ are the unique solutions to \eqref{volterra_sf_w} and \eqref{fun_u}, respectively.
		\end{proposition}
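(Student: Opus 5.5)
The plan is to derive Proposition~\ref{one_sided_upwards} from the two-sided identities of Theorems~\ref{main_sf} and~\ref{resol_ts} by letting $c\to-\infty$, with the limit of the scale-function ratio supplied by Lemma~\ref{lemma_one_sided_limit}.

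\textbf{Exit identity.} Fix $x\le b$ and take $c<\min(x,d)$. By Theorem~\ref{main_sf},
\[
\E_x\left[e^{-q\tau_b^+-A_{\tau_b^+}};\tau_b^+<\tau_c^-\right]=\frac{W_{{\mathbf{T}}(\nu_A)}^{(q)}(x,c)/W^{(\eta+q)}(-c)}{W_{{\mathbf{T}}(\nu_A)}^{(q)}(b,c)/W^{(\eta+q)}(-c)}.
\]
As $c\to-\infty$ we have $\tau_c^-\to\infty$ $\p_x$-a.s., since $X$ has no positive jumps and paths are locally bounded. Hence $1_{\{\tau_b^+<\tau_c^-\}}\uparrow 1_{\{\tau_b^+<\infty\}}$, and monotone convergence gives the left-hand side of \eqref{one_sided_iden_down}.

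On the right-hand side, Lemma~\ref{lemma_one_sided_limit} says that numerator and denominator converge pointwise to $u^{(q)}_{{\mathbf{T}}(\nu_A)}(x)$ and $u^{(q)}_{{\mathbf{T}}(\nu_A)}(b)$. We must check $u^{(q)}_{{\mathbf{T}}(\nu_A)}(b)>0$. One route is to bound $W_{{\mathbf{T}}(\nu_A)}^{(q)}(b,c)$ below by a positive multiple of $W^{(\eta+q)}(b-c)$. Alternatively, note that the limit of the left side is at most $1$ while $u^{(q)}_{{\mathbf{T}}(\nu_A)}(x)>0$ for $x<d$, where it equals $e^{\Phi(\eta+q)x}$. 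This forces $u^{(q)}_{{\mathbf{T}}(\nu_A)}(b)\ge u^{(q)}_{{\mathbf{T}}(\nu_A)}(x)>0$, provided the ratio limit is taken in an order that makes sense.

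\textbf{Resolvent identity.} Start from Theorem~\ref{resol_ts} with the same $c$. The left side increases to $\E_x[\int_0^{\tau_b^+}e^{-qt-A_t}f(X_t)dt]$ by monotone convergence, since $f\ge0$. The integrand on the right is
\[
f(y)\Bigl(\tfrac{W_{{\mathbf{T}}(\nu_A)}^{(q)}(x,c)}{W_{{\mathbf{T}}(\nu_A)}^{(q)}(b,c)}W_{{\mathbf{T}}(\nu_A)}^{(q)}(b,y)-W_{{\mathbf{T}}(\nu_A)}^{(q)}(x,y)\Bigr)1_{\{y>c\}}.
\]
Note that $W_{{\mathbf{T}}(\nu_A)}^{(q)}(\cdot,y)$ does not depend on $c$. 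The integrand therefore converges pointwise, by the first part, to the integrand in \eqref{one_sided_resolvent_down}.

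\textbf{Main obstacle.} The difficulty is justifying the passage to the limit inside the $dy$-integral over the unbounded region $(-\infty,b]$. Applying the same theorem with $f$ replaced by $f1_{\{y\ge M\}}$, and using monotone convergence on the probabilistic side, the identity holds on every $[M,b]$ once the right side converges there. On the compact set $[M,b]$ this follows by dominated convergence. For that we need local boundedness of $W^{(q)}_{{\mathbf{T}}(\nu_A)}(b,y)$ and $W^{(q)}_{{\mathbf{T}}(\nu_A)}(x,y)$ jointly in $y\in[M,b]$, which comes from the Volterra equation via a Gronwall-type bound as in Proposition~\ref{exist_uni}. We also need boundedness of the ratio, which is at most $1$ since it is a probability. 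Finally, let $M\to-\infty$. On the left, monotone convergence applies. On the right, the integrand is nonnegative, because it equals a resolvent density, namely the limit of nonnegative densities, so monotone convergence applies there too.

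If nonnegativity of the density is delicate, I would instead use the explicit form for $y<d$. There the measure equals $\eta\,dy$, so the kernel reduces to classical $W^{(\eta+q)}$ expressions, which decay like $e^{\Phi(\eta+q)(x-y)}$ and give an integrable dominating function.
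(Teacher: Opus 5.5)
Your proposal is correct and follows the paper's route: let $c\to-\infty$ in Theorems~\ref{main_sf} and~\ref{resol_ts}, using Lemma~\ref{lemma_one_sided_limit} for the limit of the scale-function ratio and monotone convergence on the probabilistic side.

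The only difference is in the resolvent step. The paper avoids your truncation in $M$ (and the dominated-convergence step on $[M,b]$). It observes that $c\mapsto W^{(q)}_{\mathbf{T}(\nu_A)}(x,c)/W^{(q)}_{\mathbf{T}(\nu_A)}(b,c)$ is non-increasing, being the two-sided exit transform, so the nonnegative integrand increases as $c\downarrow-\infty$ and monotone convergence applies directly on $(-\infty,b]$. Your explicit check that $u^{(q)}_{\mathbf{T}(\nu_A)}(b)>0$ is a detail the paper leaves implicit.
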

		\begin{proof}
			Identity \eqref{one_sided_iden_down} follows by taking $c\to-\infty$ in \eqref{two_sided_up} and using Lemma \ref{lemma_one_sided_limit}. On the other hand, note that identity \eqref{two_sided_up} implies that the mapping $c\mapsto {\frac{W_{{\mathbf{T}}(\nu_A)}^{(q)}(x,c)}{W_{{\mathbf{T}}(\nu_A)}^{(q)}(b,c)}}=\E_x\left[e^{-q\tau_b^+-A_{\tau_b^+}};\tau_b^+<\tau_c^-\right]$ is non-increasing for fixed $x\leq b$. Hence, taking $c\to-\infty$ in \eqref{resolvent_iden} and using monotone convergence we obtain \eqref{one_sided_resolvent_down}.
		\end{proof}
		\subsection{One-sided exit problem downwards}
		In this last section we provide the solution to the one-sided exit problem downwards, which is the content of the next result.
		\begin{proposition}\label{one_sided_downwards}
			{(i) For $x\geq c$ and {$q\geq0$}, {the limit $C^{(q)}_{{\mathbf{T}}(\nu_A)}(c):=\lim_{b\to\infty}\frac{Z^{(q)}_{{\mathbf{T}}(\nu_A)}(b,c)}{W^{(q)}_{{\mathbf{T}}(\nu_A)}(b,c)}$ is well-defined, and}}
			\begin{align}\label{one_sided_down_iden}
				\E_x\left[e^{-q\tau_c^--A_{\tau_c^-}};\tau_c^-<\infty\right]=Z^{(q)}_{{\mathbf{T}}(\nu_A)}(x,c)-C^{(q)}_{{\mathbf{T}}(\nu_A)}(c)W_{{\mathbf{T}}(\nu_A)}^{(q)}(x,c). 
			\end{align}
			
			(ii) For $x,y\geq c$ and $q\geq0$, the limit $c^{(q)}_{{\mathbf{T}}(\nu_A)}(y,c):=\lim_{b\to\infty}\frac{W_{{\mathbf{T}}(\nu_A)}^{(q)}(b,y)}{W_{{\mathbf{T}}(\nu_A)}^{(q)}(b,c)}$ is well-defined. {Moreover, for any bounded, non-negative measurable function $f:\R\to\R$,
			\begin{align}\label{resol_down}
				\E_x\left[\int_0^{\tau_c^-}e^{-qt-A_t}f(X_t)dt\right]=\int_c^{\infty}f(y)\left(c^{(q)}_{{\mathbf{T}}(\nu_A)}(y,c)W_{{\mathbf{T}}(\nu_A)}^{(q)}(x,c)-W_{{\mathbf{T}}(\nu_A)}^{(q)}(x,y)\right)dy.
			\end{align}}
			{Additionally, we have that $c^{(q)}_{{\mathbf{T}}(\nu_A)}(y,c)<\infty$ for every $y\geq c$.} 
		\end{proposition}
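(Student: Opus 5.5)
We derive Proposition \ref{one_sided_downwards} from the two-sided identities, Theorems \ref{two_sided_down_thm} and \ref{resol_ts}, by letting $b\to\infty$. The probabilistic expressions give us both the existence of the limits and their identification.

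\textbf{Part (i).} Fix $c$ and $x\in(c,b)$. On the left-hand side of \eqref{two_sided_down_iden}, the event $\{\tau_c^-<\tau_b^+\}$ increases to $\{\tau_c^-<\infty\}$ as $b\uparrow\infty$, since $\tau_b^+\to\infty$ a.s. (no positive jumps and finite paths). By monotone convergence,
\[
\E_x\left[e^{-q\tau_c^--A_{\tau_c^-}};\tau_c^-<\tau_b^+\right]\uparrow \E_x\left[e^{-q\tau_c^--A_{\tau_c^-}};\tau_c^-<\infty\right].
\]
The right-hand side of \eqref{two_sided_down_iden} equals $Z^{(q)}_{\mathbf{T}(\nu_A)}(x,c)-\frac{Z^{(q)}_{\mathbf{T}(\nu_A)}(b,c)}{W^{(q)}_{\mathbf{T}(\nu_A)}(b,c)}W^{(q)}_{\mathbf{T}(\nu_A)}(x,c)$, and the first term does not depend on $b$. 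We pick some $x\in(c,\infty)$ with $W^{(q)}_{\mathbf{T}(\nu_A)}(x,c)>0$; positivity follows from Proposition \ref{exist_uni}, since $h=W^{(q)}\geq0$, $\mathbf{T}(\nu_A)\geq0$, and $W^{(q)}_{\mathbf{T}(\nu_A)}(x,c)\geq W^{(q)}(x-c)>0$. Then the ratio $Z/W$ at $b$ is a monotone (non-increasing) function of $b$ for $b>x$, bounded below because the expectation is at most $1$. Hence the limit $C^{(q)}_{\mathbf{T}(\nu_A)}(c)$ exists and is finite, and \eqref{one_sided_down_iden} follows, first for $x>c$. For $x=c$ it follows by the same argument or trivially in the unbounded variation case. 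Independence from the choice of $x$ is automatic, since the ratio itself does not involve $x$.

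\textbf{Part (ii).} We write $R_b(x,y)$ for the kernel in \eqref{resolvent_iden}. Applying \eqref{resolvent_iden} with $f=1_{B}$, monotonicity of $\tau_b^+$ in $b$ shows that $b\mapsto\int_B R_b(x,y)dy$ is non-decreasing for every Borel $B\subset[c,b]$. We would like to conclude pointwise monotonicity of $R_b(x,y)$, but this is delicate because $R_b$ is only defined a.e. through the measure. A cleaner route is a strong Markov argument at $\tau_y^+$. For $c\le y\le x'$, the density of the killed resolvent is determined by the exit structure, and Theorem \ref{main_sf} gives, for $y<b$,
\[
\frac{W^{(q)}_{\mathbf{T}(\nu_A)}(b,y)}{W^{(q)}_{\mathbf{T}(\nu_A)}(b,c)}=\frac{W^{(q)}_{\mathbf{T}(\nu_A)}(b,y)}{W^{(q)}_{\mathbf{T}(\nu_A)}(b,c)}.
\]
We would instead use the factorisation
\[
\frac{W(x,c)}{W(b,c)}W(b,y)=W(x,c)\,\frac{W(b,y)}{W(b,c)},
\]
and compare two levels $b<b'$ via Theorem \ref{main_sf}:
\[
\frac{W(b,c)}{W(b',c)}=\E_b\left[e^{-q\tau_{b'}^+-A};\tau_{b'}^+<\tau_c^-\right]\le \E_b\left[e^{-q\tau_{b'}^+-A};\tau_{b'}^+<\tau_y^-\right]=\frac{W(b,y)}{W(b',y)}.
\]
This holds for $y\ge c$ because $\tau_y^-\le\tau_c^-$, with the inequality in the appropriate direction. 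Consequently $b\mapsto W(b,y)/W(b,c)$ is monotone, here non-increasing, and nonnegative, so the limit $c^{(q)}_{\mathbf{T}(\nu_A)}(y,c)$ exists. Finiteness holds since the ratio is finite at any fixed $b_0>y$ and is non-increasing thereafter.

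Finally, we let $b\to\infty$ in \eqref{resolvent_iden}. The left-hand side increases to the target by monotone convergence. On the right-hand side, $f(y)R_b(x,y)1_{[c,b]}(y)\ge0$, where non-negativity follows from the left-hand side with $f$ localized, and the integrand converges pointwise to $c^{(q)}(y,c)W(x,c)-W(x,y)$. We need to justify passing to the limit under the integral. Since $W(b,y)/W(b,c)$ is non-increasing in $b$, the integrand $R_b$ is non-increasing in $b$ for each fixed $y$, while the domain $[c,b]$ grows. We therefore handle it with a truncation. For fixed $M$, restrict to $y\in[c,M]$ and use dominated convergence, with domination by $R_{b_0}$, which is bounded on compacts. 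This gives, for each $M$,
\[
\E_x\left[\int_0^{\tau_c^-}e^{-qt-A_t}f(X_t)1_{\{X_t\le M\}}dt\right]=\int_c^M f(y)\bigl(c^{(q)}(y,c)W(x,c)-W(x,y)\bigr)dy,
\]
where the left-hand side is obtained by monotone convergence in $b$. We then let $M\to\infty$ by monotone convergence on both sides, using non-negativity of the limit kernel. This last step, the interchange of the limit $b\to\infty$ with the integral over a growing domain, is the main technical point, and the truncation handles it.
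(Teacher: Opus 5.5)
Your part (i) is correct and essentially the paper's argument. You use that the left side of \eqref{two_sided_down_iden} is non-decreasing in $b$. The paper uses its nonnegativity to compare the ratio $Z^{(q)}_{\mathbf{T}(\nu_A)}/W^{(q)}_{\mathbf{T}(\nu_A)}$ at $x$ and at $b$. Both give that $b\mapsto Z^{(q)}_{\mathbf{T}(\nu_A)}(b,c)/W^{(q)}_{\mathbf{T}(\nu_A)}(b,c)$ is non-increasing.

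Part (ii) has a genuine error: your key comparison inequality points the wrong way. For $y\ge c$ you have $\tau_y^-\le\tau_c^-$, hence $\{\tau_{b'}^+<\tau_y^-\}\subseteq\{\tau_{b'}^+<\tau_c^-\}$. Theorem \ref{main_sf} then gives
\[
\frac{W^{(q)}_{\mathbf{T}(\nu_A)}(b,y)}{W^{(q)}_{\mathbf{T}(\nu_A)}(b',y)}\le\frac{W^{(q)}_{\mathbf{T}(\nu_A)}(b,c)}{W^{(q)}_{\mathbf{T}(\nu_A)}(b',c)},\qquad y\le b<b'.
\]
So $b\mapsto W^{(q)}_{\mathbf{T}(\nu_A)}(b,y)/W^{(q)}_{\mathbf{T}(\nu_A)}(b,c)$ is \emph{non-decreasing}, not non-increasing. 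As a check, take $\nu_A=0$, $q=0$ and standard Brownian motion: then $W^{(0)}(x)=2x$ and the ratio is $(b-y)/(b-c)\uparrow 1$. Two later steps rely on the wrong direction and fail as written. First, finiteness of $c^{(q)}_{\mathbf{T}(\nu_A)}(y,c)$ does not follow from ``finite at $b_0$ and non-increasing thereafter''. Second, the kernel $R_b(x,y)$ cannot be dominated by $R_{b_0}$ on $[c,M]$; in fact $R_b\ge R_{b_0}$ for $b\ge b_0$.

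The missing ingredient is an upper bound on the ratio. Lemma \ref{lemma_mon} says $y\mapsto W^{(q)}_{\mathbf{T}(\nu_A)}(b,y)$ is non-increasing, so $W^{(q)}_{\mathbf{T}(\nu_A)}(b,y)\le W^{(q)}_{\mathbf{T}(\nu_A)}(b,c)$ for $y\ge c$. Hence the ratio is at most $1$. Combined with the corrected monotonicity, this gives existence of the limit and $c^{(q)}_{\mathbf{T}(\nu_A)}(y,c)\le 1$. The integrand $f(y)R_b(x,y)1_{[c,b]}(y)$ is then nonnegative and non-decreasing in $b$. So you can let $b\to\infty$ in \eqref{resolvent_iden} by monotone convergence directly, with no truncation; this is exactly the paper's route. (If you prefer to keep the truncation, dominate by $W^{(q)}_{\mathbf{T}(\nu_A)}(x,c)$ instead of $R_{b_0}$.)

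Once the sign is fixed, your event comparison via Theorem \ref{main_sf} is a valid alternative to how the paper gets the monotonicity. The paper deduces $W^{(q)}_{\mathbf{T}(\nu_A)}(b,y)/W^{(q)}_{\mathbf{T}(\nu_A)}(b,c)\ge W^{(q)}_{\mathbf{T}(\nu_A)}(x,y)/W^{(q)}_{\mathbf{T}(\nu_A)}(x,c)$ from nonnegativity of the resolvent density in Theorem \ref{resol_ts}. Your version has the small advantage of holding pointwise in $y$ rather than only almost everywhere.
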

	\section{Some preliminary results}\label{sec:prelimiary_results}
	The proofs of the main results rely on a two-step strategy based on the representation of positive co-natural additive functionals established in Section~3. Recall that any PcNAF can be described in terms of its Revuz measure, which is a Radon measure on $\mathbb{R}$, and that this representation allows one to express the additive functional as a mixture of local times. This viewpoint enables us to formulate the problem entirely at the level of measures and associated integral equations.
		
		Our approach proceeds by first analyzing the case in which the Revuz measure is finite and atomic. In this setting, the additive functional reduces to a finite linear combination of local times, and the corresponding fluctuation identities can be obtained by extending the recursive arguments developed in \cite{LZ}, allowing in particular for both bounded and unbounded variation cases. Moreover, we show that the associated generalized scale functions can be characterized as solutions to integral equations. This case serves as the foundation of our analysis, as it provides explicit constructions of the generalized scale functions and reveals the structure of the associated integral equations.
		
		In a second step, we extend these results to general Radon measures. This is achieved by approximating the Revuz measure with a sequence of random atomic measures constructed from Poisson random measures. The corresponding generalized scale functions then satisfy random integral equations, and we establish their convergence to the deterministic solutions associated with the original measure. This approximation procedure allows us to transfer the fluctuation identities from the atomic setting to the general case.
		
		The section is organized as follows. In Section~\ref{sec:local_times} we treat the case of finite mixtures of local times and derive the corresponding fluctuation identities. Section~\ref{sec:integral equations} is devoted to the study of structural properties of the associated integral equations. Finally, in Section~\ref{sec:convergence_sf} we establish convergence results for these equations under Poissonian approximations, which are instrumental in completing the proofs of the main results.
	\subsection{Fluctuation identities for finite mixtures of local times}\label{sec:local_times}
	In this section, we establish the fluctuation identities in the case where the additive functional is a finite mixture of local times. This corresponds to the situation where the associated Revuz measure is a finite atomic measure. This case constitutes the first step in our approach and serves as the foundation for the general results obtained later.
	
	More precisely, we consider a positive co-natural additive functional $A$ of the form
	$
	A_t = \sum_{i=1}^n p_i L^{a_i}_t$,  $t \ge 0$,
	where $a_i \in \mathbb{R}$, $p_i > 0$, and $L^{a_i}$ denotes the local time of the Lévy process $X$ at level $a_i$. In this case, the associated Revuz measure is given by
$
	\nu_A(dz) = \sum_{i=1}^n p_i \delta_{a_i}(dz).
$
	
	Our goal is to derive explicit expressions for the fluctuation identities in this setting and to characterize the associated generalized scale functions. To this end, we follow and extend the recursive approach developed in \cite{LZ}, allowing in particular for both bounded and unbounded variation cases. Finally, we show that the resulting scale functions can be represented as solutions to integral equations.
	
	{                                                                                                                                                                                                                                   Throughout this section we fix $c\in\R$.}
	Now, consider $\{p_k>0\}$ a positive sequence and $\{a_k\}_{k\geq 1}$ a 
	strictly increasing sequence with $a_1>c$ for some $c\in\R$. Then, following \cite{LZ}, we start by introducing the following functions, defined inductively for {$q\geq 0$}, $x,y\in\R$ and $k\geq 2$:
	\begin{align}\label{volterra_lt}
		W^{(q,p_1,\dots,p_k)}_{(a_1,\dots,a_k)}(x,y)&:=W^{(q,p_1,\dots,p_{k-1})}_{(a_1,\dots,a_{k-1})}(x,y)+{\mathbf{d}}(e^{p_k/{\mathbf{d}}}-1)W^{(q)}(x-a_k)W^{(q,p_1,\dots,p_{k-1})}_{(a_1,\dots,a_{k-1})}(a_k,y),\\
		\label{volterra_lt_reso}
		Z^{(q,p_1,\dots,p_k)}_{(a_1,\dots,a_k)}(x,y)&:=Z^{(q,p_1,\dots,p_{k-1})}_{(a_1,\dots,a_{k-1})}(x,y)+{\mathbf{d}}(e^{p_k/{\mathbf{d}}}-1)W^{(q)}(x-a_k)Z^{(q,p_1,\dots,p_{k-1})}_{(a_1,\dots,a_{k-1})}(a_k,y),
	\end{align} 
	with initial conditions
	\begin{align}\label{volterra_lt_initial}
		W^{(q,p_1)}_{(a_1)}(x,y)&:=W^{(q)}(x-y)+{\mathbf{d}}(e^{p_1/{\mathbf{d}}}-1)W^{(q)}(x-a_1)W^{(q)}(a_1-y),\notag\\
		Z^{(q,p_1)}_{(a_1)}(x,y)&:=Z^{(q)}(x-y)+{\mathbf{d}}(e^{p_1/{\mathbf{d}}}-1)W^{(q)}(x-a_1)Z^{(q)}(a_1-y).
	\end{align}
When the Lévy process $X$ has paths of unbounded variation, these expressions are understood in the limiting sense as $\mathbf{d} \to \infty$, in which case $\mathbf{d}(e^{p/\mathbf{d}} - 1) = p$.

The following result provides the fluctuation identities for this class of additive functionals. This result extends Theorem 3.2 in \cite{LZ} to the case of bounded variation. We defer the proof to Appendix \ref{LiZhouProof}.
	\begin{theorem}\label{LiZhouProof_thm}
		For {$q\geq0$} and $x \in (c,b)$, we have 
		\begin{align}
			\E_x\left[e^{-q\tau_b^+-\sum_{i=1}^np_iL^{a_i}_{\tau_b^+}};\tau_b^+<\tau_c^-\right]=\frac{W^{(q,p_1,\dots,p_n)}_{(a_1,\dots,a_n)}(x,c)}{W^{(q,p_1,\dots,p_n)}_{(a_1,\dots,a_n)}(b,c)},
			\label{local_time_exit}
		\end{align}	
		and	
		\begin{align}\label{tsd_u_0}
			\E_x\left[e^{-q\tau_c^--\sum_{i=1}^np_iL^{a_i}_{\tau_c^-}};\tau_c^-<\tau_b^+\right]=Z^{(q,p_1,\dots,p_n)}_{(a_1,\dots,a_n)}(x,c)-\frac{W^{(q,p_1,\dots,p_n)}_{(a_1,\dots,a_n)}(x,c)}{W^{(q,p_1,\dots,p_n)}_{(a_1,\dots,a_n)}(b,c)}Z^{(q,p_1,\dots,p_n)}_{(a_1,\dots,a_n)}(b,c).
		\end{align}	
		{Additionally, for $x\in(c,b)$ and any bounded measurable function $f:\R\to\R$,
			\begin{align}
				\E_x\Bigg[\int_0^{\tau_b^+\wedge\tau_c^-}&e^{-qt-\sum_{i=1}^np_iL^{a_i}_{t}}f(X_t)dt\Bigg]\notag\\
				&=\int_c^bf(y)\left(\frac{W^{(q,p_1,\dots,p_n)}_{(a_1,\dots,a_n)}(x,c)}{W^{(q,p_1,\dots,p_n)}_{(a_1,\dots,a_n)}(b,c)}W^{(q,p_1,\dots,p_n)}_{(a_1,\dots,a_n)}(b,y)-W^{(q,p_1,\dots,p_n)}_{(a_1,\dots,a_n)}(x,y)\right)dy. \label{local_time_resolvent}
		\end{align}}
	\end{theorem}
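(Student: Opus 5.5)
We argue by induction on the number $n$ of atoms. The key tool is the strong Markov property at the first hitting time of the highest level $a_n$, combined with the classical identities \eqref{laplace_in_terms_of_z} and \eqref{resolvent}. Write $A^{(k)}_t=\sum_{i=1}^k p_iL^{a_i}_t$. We may assume $c<a_1<\dots<a_n<b$, since atoms outside $(c,b)$ do not contribute before $\tau_b^+\wedge\tau_c^-$. For $n=0$ the three identities are exactly \eqref{laplace_in_terms_of_z} and \eqref{resolvent}.

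\textbf{Step 1: the one-atom excursion quantity.} Let $T_a=\inf\{t>0:X_t=a\}$. Spectral negativity implies that $X$ started from $x<a$ can only reach $(a,\infty)$ by creeping through $a$. Set $\tau=\tau_b^+\wedge\tau_c^-$. We first compute
\[
\E_a\bigl[e^{-q\tau-pL^a_\tau};\tau_b^+<\tau_c^-\bigr]
\]
for the single-atom functional $pL^a$.

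In the bounded variation case, $L^a$ jumps by $1/\mathbf d$ at each visit to $a$, and visits are isolated. Conditioning on the number of returns to $a$ before exit yields a geometric series. The return probability is computed from \eqref{laplace_in_terms_of_z} together with the standard identity $\E_a[e^{-qT_a};T_a<\tau]=1-\frac{W^{(q)}(b-c)}{\mathbf dW^{(q)}(a-c)W^{(q)}(b-a)}$ (this uses $W^{(q)}(0)=1/\mathbf d$). Summing the series yields the factor $\mathbf d(e^{p/\mathbf d}-1)$.

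In the unbounded variation case we use excursion theory at $a$: under $\p_a$ the exit happens at an exponential local time, and a standard argument gives the factor $p$. This is consistent with the limit $\mathbf d\to\infty$. Alternatively, the unbounded variation case can be reduced to Li--Zhou [Theorem 3.2 of \cite{LZ}].

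\textbf{Step 2: the inductive step for the upward exit.} Assume the result for $a_1,\dots,a_{n-1}$. Start from $x$.
\begin{itemize}
\item If $x<a_n$, then before $T_{a_n}$ the functional $A^{(n)}$ coincides with $A^{(n-1)}$. Moreover $\{\tau_b^+<\tau_c^-\}$ requires passing $a_n$ continuously.
\item Starting from $a_n$, we decompose paths according to whether they exit upward before or after returning below $a_n$.
\end{itemize}
The strong Markov property at $T_{a_n}$ then gives
\[
\E_x\bigl[\cdot\bigr]=\frac{W^{(q,p_1..p_{n-1})}(x,c)}{W^{(q,p_1..p_{n-1})}(a_n,c)}\cdot\E_{a_n}[\cdots].
\]
The rest of the proof checks that the recursion \eqref{volterra_lt} reproduces this product form.

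The cleanest way to carry this out is to verify that $F(x)=W^{(q,p_1,\dots,p_n)}(x,c)$ makes
\[
e^{-q(t\wedge\tau)-A^{(n)}_{t\wedge\tau}}F(X_{t\wedge\tau})
\]
a martingale. This is a ``harmonic function'' verification. We use two facts:
\begin{itemize}
\item $W^{(q)}(\cdot-a_n)$ is $q$-harmonic away from $a_n$.
\item Its compensator at $a_n$ produces $-dL^{a_n}$ scaled so as to match the jump $\mathbf d(e^{p_n/\mathbf d}-1)$ (multiplicative in the bounded variation case).
\end{itemize}
Optional stopping, together with $F(X_{\tau_c^-})=0$ (since $W=0$ on negatives), gives \eqref{local_time_exit}.

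\textbf{Step 3: downward exit and resolvent.} For \eqref{tsd_u_0} we apply the same martingale argument to $G(x)=Z^{(q,p_1..p_n)}(x,c)$. On $\{X<c\}$ this function equals $1$, and subtracting the appropriate multiple of $F$ to kill the value at $b$ gives the claim. For \eqref{local_time_resolvent} we use the martingale
\[
e^{-qt-A_t}K(X_t)+\int_0^te^{-qs-A_s}f(X_s)ds,
\]
where $K$ denotes the right-hand side of \eqref{local_time_resolvent} as a function of $x$. Alternatively, we can proceed inductively by the strong Markov property at $T_{a_n}$ using \eqref{resolvent}.

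\textbf{Expected obstacle.} The main difficulty is the precise local-time compensator at $a_n$ in the bounded variation case. Here $A$ jumps, so the correct factor is $e^{p/\mathbf d}-1$ rather than $p/\mathbf d$, and this must be obtained from the geometric-series computation in Step 1 using $W^{(q)}(0)=1/\mathbf d$. I would verify the formula for $n=1$ first, then carry out the induction.
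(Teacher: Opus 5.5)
Your skeleton matches the paper's proof in Appendix~\ref{LiZhouProof}: induction on the top atom, strong Markov property at the passage times of $a_n$, and the unbounded variation case delegated to Li--Zhou. However, in both of your variants the step that actually produces the recursion \eqref{volterra_lt} is deferred (``the rest of the proof checks\dots'').

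In the strong Markov variant, your Step~1 geometric series only handles a single atom. In the inductive step the excursions below $a_n$ pass through $a_1,\dots,a_{n-1}$, so the return factor is $r_n=\E_{a_n}[e^{-qT_{a_n}-A^{(n-1)}_{T_{a_n}}};T_{a_n}<\tau]$, and Step~1 does not compute it. The missing ingredient is the following identity (it is \eqref{lema2.1} in the paper). For $a_n\le x\le b$ and $W^{(n-1)}:=W^{(q,p_1,\dots,p_{n-1})}_{(a_1,\dots,a_{n-1})}$,
\[
\E_x\left[e^{-q\tau^-_{a_n}}W^{(n-1)}(X_{\tau^-_{a_n}},c);\tau^-_{a_n}<\tau^+_b\right]=W^{(n-1)}(x,c)-\frac{W^{(q)}(x-a_n)}{W^{(q)}(b-a_n)}W^{(n-1)}(b,c).
\]
It follows from the induction hypothesis on $[c,b]$ and the strong Markov property at $\tau^-_{a_n}$, since no $A^{(n-1)}$ accrues before $\tau^-_{a_n}$. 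It gives
\[
r_n=1-\frac{W^{(q)}(0)W^{(n-1)}(b,c)}{W^{(q)}(b-a_n)W^{(n-1)}(a_n,c)},
\]
so the renewal equation $g(a_n)=W^{(q)}(0)/W^{(q)}(b-a_n)+e^{-p_n/\mathbf d}r_n\,g(a_n)$ holds. Using $W^{(q)}(0)=1/\mathbf d$, it yields $g(a_n)=e^{p_n/\mathbf d}W^{(n-1)}(a_n,c)/W^{(q,p_1,\dots,p_n)}_{(a_1,\dots,a_n)}(b,c)$. This is exactly the paper's computation, and the paper repeats it for $Z$ and for the resolvent.

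There are also two smaller slips.
\begin{itemize}
\item Your Step~2 product formula is missing the factor $e^{-p_n/\mathbf d}$. In bounded variation, $L^{a_n}$ jumps by $1/\mathbf d$ at $T_{a_n}$ itself, while under $\p_{a_n}$ the visit at time $0$ is not counted.
\item An atom at exactly $b$ is not negligible, because $X_{\tau_b^+}=b$. The reduction to $a_n<b$ still survives, but only because $W^{(q,p_1,\dots,p_n)}_{(a_1,\dots,a_n)}(b,c)=e^{p_n/\mathbf d}W^{(n-1)}(b,c)$ when $a_n=b$.
\end{itemize}

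Your ``cleanest'' martingale route is viable and genuinely different from the paper's. As written, though, it assumes the conclusion, because the normalization of the compensator is exactly what fixes the coefficient. You need two facts.

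(a) For each level $a$ and $q>0$, $e^{-qt}W^{(q)}(X_t-a)-\int_0^te^{-qs}\,dL^a_s$ is a martingale; the case $q=0$ follows by a limit. This comes from three ingredients: the $q$-potential density $\Phi'(q)e^{\Phi(q)(x-a)}-W^{(q)}(x-a)$, the martingale $e^{\Phi(q)X_t-qt}$, and the occupation density formula. Note the sign: the compensator enters with a plus sign, and only the term coming from $d(e^{-A_t})$ is negative.

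(b) Write $F(x)=W^{(q)}(x-c)+\sum_i\beta_iW^{(q)}(x-a_i)$. The finite-variation part of $e^{-qt-A_t}F(X_t)$ must vanish at each $a_i$. In unbounded variation this reads $\beta_i=p_iF(a_i)$. In bounded variation it means $e^{-A_t}F(X_t)$ does not jump at the creeping times through $a_i$. That is, $F(a_i-)=e^{-p_i/\mathbf d}F(a_i)$, equivalently $\beta_i=\mathbf d(1-e^{-p_i/\mathbf d})F(a_i)$.

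In both cases (b) is precisely the $\mathbf T$-integral equation of Lemma~\ref{sf_lt_lemma}, hence equivalent to \eqref{volterra_lt}. So in this route the factor $\mathbf d(e^{p/\mathbf d}-1)$ comes from jump matching with $W^{(q)}(0)=1/\mathbf d$, not from the geometric series; your ``expected obstacle'' paragraph mixes the two routes.

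Carried out properly, the martingale argument has real advantages. It needs no probabilistic induction, it explains where $\mathbf T$ comes from, and it handles the resolvent by integrating (a) against $f(y)\,dy$. But your sketch supplies neither (a) nor (b), and together they are the whole content of the theorem.
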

	%
We now show that $W^{(q,p_1,\dots,p_n)}_{(a_1,\dots,a_n)}$ can be characterized as the unique solution to an integral equation. {This characterization is crucial for deriving the fluctuation identities in the general case, since it enables us to approximate the scale functions associated with general PcNAFs by those corresponding to finite linear combinations of local times at distinct levels.}
	\begin{lemma}\label{sf_lt_lemma}
		The scale functions $W^{(q,p_1,\dots,p_n)}_{(a_1,\dots,a_n)}$  and $Z^{(q,p_1,\dots,p_n)}_{(a_1,\dots,a_n)}$ defined in \eqref{volterra_lt}, are the unique solutions to the following integral equations
		\begin{align}
			W^{(q,p_1,\dots,p_n)}_{(a_1,\dots,a_n)}(x,y)&=W^{(q)}(x-y)+\int_{[y,x]}W^{(q)}(x-z)W^{(q,p_1,\dots,p_n)}_{(a_1,\dots,a_n)}(z,y)\nu^{(p_1,\dots,p_n)}_{(a_1,\dots,a_n)}(dz)\label{volterra_lt_2_W}\\
			Z^{(q,p_1,\dots,p_n)}_{(a_1,\dots,a_n)}(x,y)&=Z^{(q)}(x-y)+\int_{[y,x]}W^{(q)}(x-z)Z^{(q,p_1,\dots,p_n)}_{(a_1,\dots,a_n)}(z,y)\nu^{(p_1,\dots,p_n)}_{(a_1,\dots,a_n)}(dz),\label{volterra_lt_2_Z}
		\end{align}
		for $x,y\in\R$, respectively, where
		\[
		\nu^{(p_1,\dots,p_n)}_{(a_1,\dots,a_n)}(dy):=\sum_{i=1}^n{\mathbf{d}}(1-e^{-p_i/{\mathbf{d}}})\delta_{a_i}(dy).
		\]
	\end{lemma}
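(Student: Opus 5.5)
The plan is to prove both identities by induction on $n$, using the recursions \eqref{volterra_lt}–\eqref{volterra_lt_initial}. Uniqueness will come from Proposition~\ref{exist_uni}. The arithmetic fact driving everything is the following. Put $\alpha_k:={\mathbf{d}}(1-e^{-p_k/{\mathbf{d}}})$ and $\beta_k:={\mathbf{d}}(e^{p_k/{\mathbf{d}}}-1)$. By \eqref{eq:Wqp0}, $W^{(q)}(0)=1/{\mathbf{d}}$ in the bounded variation case and $0$ otherwise, consistent with ${\mathbf{d}}=\infty$. Hence
\[
1+\beta_kW^{(q)}(0)=e^{p_k/{\mathbf{d}}},\qquad 1-\alpha_kW^{(q)}(0)=e^{-p_k/{\mathbf{d}}},\qquad \alpha_k e^{p_k/{\mathbf{d}}}=\beta_k,
\]
with the unbounded variation case read as $\alpha_k=\beta_k=p_k$. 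In particular $\inf_i(1-W^{(q)}(0)\alpha_i)>0$, so \eqref{assum_atoms} holds for the finite measure $\nu^{(p_1,\dots,p_n)}_{(a_1,\dots,a_n)}$. Proposition~\ref{exist_uni} (with $\lambda=0$, and $h=W^{(q)}$, resp.\ $h=Z^{(q)}$) therefore gives uniqueness on every $[y,T]$. It remains to check that the recursively defined functions solve the equations.

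Fix $y$ and write $F_k(x):=W^{(q,p_1,\dots,p_k)}_{(a_1,\dots,a_k)}(x,y)$, with $F_0(x):=W^{(q)}(x-y)$. Note that $F_1$ in \eqref{volterra_lt_initial} is exactly the recursion \eqref{volterra_lt} with $k=1$. Since the atomic integral is a finite sum, the induction hypothesis reads
\[
F_{k}(x)=W^{(q)}(x-y)+\sum_{i\le k,\ y\le a_i\le x}\alpha_iW^{(q)}(x-a_i)F_{k}(a_i).
\]
For $k=0$ this is trivial.

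For the inductive step, use that the $a_i$ are strictly increasing and $W^{(q)}$ vanishes on $(-\infty,0)$. Then \eqref{volterra_lt} gives $F_k=F_{k-1}$ on $(-\infty,a_k)$, so $F_k(a_i)=F_{k-1}(a_i)$ for $i<k$. At the new atom,
\[
F_k(a_k)=(1+\beta_kW^{(q)}(0))F_{k-1}(a_k)=e^{p_k/{\mathbf{d}}}F_{k-1}(a_k),
\]
and therefore $\alpha_kF_k(a_k)=\beta_kF_{k-1}(a_k)$. Substituting the hypothesis for $F_{k-1}$ into $F_k(x)=F_{k-1}(x)+\beta_kW^{(q)}(x-a_k)F_{k-1}(a_k)$ and using these two relations yields the identity for $F_k$, with the new term $\alpha_kW^{(q)}(x-a_k)F_k(a_k)$. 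That term is present exactly when $a_k\le x$, since otherwise $W^{(q)}(x-a_k)=0$.

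Atoms lying below $y$ need no separate treatment in the $W$ case. Indeed, $F_{k-1}(x)=0$ for $x<y$ by an immediate induction, so $\beta_kW^{(q)}(x-a_k)F_{k-1}(a_k)$ vanishes whenever $a_k<y$, matching the restriction of the integral to $[y,x]$. The point $z=x$ being included in $[y,x]$ is what makes the $W^{(q)}(0)$ terms consistent. This is precisely where the bounded variation case differs from \cite{LZ}.

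The $Z$ identity follows by the same computation with $G_k(x):=Z^{(q,p_1,\dots,p_k)}_{(a_1,\dots,a_k)}(x,y)$ and $G_0(x)=Z^{(q)}(x-y)$. The one point to watch, which I expect to be the only real obstacle, is that $G_{k-1}(a_i)=1\neq0$ when $a_i<y$. Such atoms would contribute to the recursion but not to the integral over $[y,x]$. I would therefore carry out the $Z$ argument for $y\le a_1$, which is the setting fixed in this section where $c<a_1$ and $Z$ is evaluated at $y=c$. Alternatively, for general $y$ one would interpret the sequence as consisting only of the atoms in $[y,\infty)$.
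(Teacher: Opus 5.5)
Your proof is correct and follows the same route as the paper. Both argue by induction on $n$ via the recursion \eqref{volterra_lt}, use $W^{(q)}(a_i-a_n)=0$ for $i<n$ together with $\mathbf{d}(1-e^{-p/\mathbf{d}})\bigl(1+\mathbf{d}(e^{p/\mathbf{d}}-1)W^{(q)}(0)\bigr)=\mathbf{d}(e^{p/\mathbf{d}}-1)$ at the new atom, and take uniqueness from Proposition \ref{exist_uni}.

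Your caveat about $Z$ is also right, and the paper's remark that the $Z$ case ``follows similar lines'' passes over it. For an atom $a_i<y$ the recursion contributes $\mathbf{d}(e^{p_i/\mathbf{d}}-1)W^{(q)}(x-a_i)Z^{(q)}(a_i-y)=\mathbf{d}(e^{p_i/\mathbf{d}}-1)W^{(q)}(x-a_i)\neq0$, whereas the integral over $[y,x]$ does not see that atom. So the $Z$ identity holds only for $y\le a_1$, which covers the paper's only use, with $y=c<a_1$.
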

	\begin{proof}
		We provide the proof only for $W^{(q, p_1, \dots, p_n)}_{(a_1, \dots, a_n)}$, as the proof for $Z^{(q, p_1, \dots, p_n)}_{(a_1, \dots, a_n)}$ follows similar lines. We prove the statement by induction. 
		
		(i) {We start with the case $n=1$. Using \eqref{volterra_lt_initial} we have for $x,y\in\R$
			\begin{align}\label{ind_n=1}
				W^{(q)}(x-y)&+\int_{[y,x]}W^{(q)}(x-z)W^{(q,p_1)}_{(a_1)}(z,y)\nu_{(a_1)}^{(p_1)}(dz) \notag\\&=W^{(q)}(x-y)+{\mathbf{d}}(1-e^{-p_1/{\mathbf{d}}})W^{(q)}(x-a_1)W^{(q,p_1)}_{(a_1)}(a_1,y)\notag\\
				&=W^{(q)}(x-y)\notag\\&+{\mathbf{d}}(1-e^{-p_1/{\mathbf{d}}})W^{(q)}(x-a_1)\left(W^{(q)}(a_1-y)+{\mathbf{d}}(e^{p_1/{\mathbf{d}}}-1)W^{(q)}(0+)W^{(q)}(a_1-y)\right)\notag\\
				&=W^{(q)}(x-y)+{\mathbf{d}}(1-e^{-p_1/{\mathbf{d}}})W^{(q)}(x-a_1)\left(W^{(q)}(a_1-y)+(e^{p_1/{\mathbf{d}}}-1)W^{(q)}(a_1-y)\right)\notag\\
				&=W^{(q)}(x-y)+{\mathbf{d}}(e^{p_1/{\mathbf{d}}}-1)W^{(q)}(x-a_1)W^{(q)}(a_1-y)\notag\\&=W^{(q,p_1)}_{(a_1)}(x,y),
			\end{align}
			which proves that indeed $W^{(q,p_1)}_{(a_1)}$ satisfies \eqref{volterra_lt_2_W} with $n=1$.}

		(ii) We now assume that $W^{(p_1,\dots,p_{n-1})}_{(a_1,\dots,a_{n-1})}$ satisfies \eqref{volterra_lt_2_W} and proceed to prove for $n$. To this end, we note that
		for $x,y\in\R$
		\begin{align*}
			&W^{(q)}(x-y)+\int_{[y,x]}W^{(q)}(x-z)W^{(q,p_1,\dots,p_n)}_{(a_1,\dots,a_n)}(z,y)\nu^{(p_1,\dots,p_n)}_{(a_1,\dots,a_n)}(dz)\notag\\&=W^{(q)}(x-y)+\sum_{i=1}^n\mathbf{d}(1-e^{-p_i/\mathbf{d}})W^{(q)}(x-a_i)W^{(q,p_1,\dots,p_n)}_{(a_1,\dots,a_n)}(a_i,y)\notag\\
			&=W^{(q)}(x-y)\notag\\&+\sum_{i=1}^n{\mathbf{d}}(1-e^{-p_i/{\mathbf{d}}})W^{(q)}(x-a_i)\left(W^{(q,p_1,\dots,p_{n-1})}_{(a_1,\dots,a_{n-1})}(a_i,y)+{\mathbf{d}}(e^{p_n/{\mathbf{d}}}-1)W^{(q)}(a_i-a_n)W^{(q,p_1,\dots,p_{n-1})}_{(a_1,\dots,a_{n-1})}(a_n,y)\right)\notag\\
			&=W^{(q)}(x-y)+\sum_{i=1}^{n-1}{\mathbf{d}}(1-e^{-p_i/{\mathbf{d}}})W^{(q)}(x-a_i)W^{(q,p_1,\dots,p_{n-1})}_{(a_1,\dots,a_{n-1})}(a_i,y)\notag\\
			&+{\mathbf{d}}(1-e^{-p_n/{\mathbf{d}}})W^{(q)}(x-a_n)\left(W^{(q,p_1,\dots,p_{n-1})}_{(a_1,\dots,a_{n-1})}(a_n,y)+{\mathbf{d}}(e^{p_n/{\mathbf{d}}}-1)W^{(q)}(0)W^{(q,p_1,\dots,p_{n-1})}_{(a_1,\dots,a_{n-1})}(a_n,y)\right)\notag\\
			&=W^{(q)}(x-y)+\sum_{i=1}^{n-1}{\mathbf{d}}(1-e^{-p_i/{\mathbf{d}}})W^{(q)}(x-a_i)W^{(q,p_1,\dots,p_{n-1})}_{(a_1,\dots,a_{n-1})}(a_i,y)\notag\\&+{\mathbf{d}}(e^{p_n/{\mathbf{d}}}-1)W^{(q)}(x-a_n)W^{(q,p_1,\dots,p_{n-1})}_{(a_1,\dots,a_{n-1})}(a_n,y)\notag\\
			&=W^{(q)}(x-y)+\int_{[y,x]}W^{(q)}(x-z)W^{(q,p_1,\dots,p_{n-1})}_{(a_1,\dots,a_{n-1})}(z,y)\nu^{(p_1,\dots,p_{n-1})}_{(a_1,\dots,a_{n-1})}(dz)\notag\\&+{\mathbf{d}}(e^{p_n/{\mathbf{d}}}-1)W^{(q)}(x-a_n)W^{(q,p_1,\dots,p_{n-1})}_{(a_1,\dots,a_{n-1})}(a_n,y)\notag\\
			&=W^{(q,p_1,\dots,p_{n-1})}_{(a_1,\dots,a_{n-1})}(x,y)+{\mathbf{d}}(e^{p_n/{\mathbf{d}}}-1)W^{(q)}(x-a_n)W^{(q,p_1,\dots,p_{n-1})}_{(a_1,\dots,a_{n-1})}(a_n,y)\notag\\
			&=W^{(q,p_1,\dots,p_n)}_{(a_1,\dots,a_n)}(x,y),
		\end{align*}
		where in sixth equality we used the induction assumption and in the second and last equality we used \eqref{volterra_lt}. 
		
		Finally the fact that $W^{(q, p_1, \dots, p_n)}_{(a_1, \dots, a_n)}$ is the unique solution to \eqref{volterra_lt_2_W} follows from Proposition \ref{exist_uni}.
	\end{proof}
	\begin{remark}\label{remark_revuz_measure} 
		Recall that the measure $\sum_{i=1}^n p_i \delta_{a_i}$ is the Revuz measure associated with the PcNAF \linebreak $\left\{\sum_{i=1}^n p_i L^{a_i}_t : t \geq 0 \right\}$. Moreover,
		$
		\nu^{(p_1,\dots,p_n)}_{(a_1,\dots,a_n)}(dz)
		= \mathbf{T}\!\left(\sum_{i=1}^n p_i \delta_{a_i}\right)(dz).
		$
		In particular, by Lemma~\ref{sf_lt_lemma}, Theorem~\ref{LiZhouProof_thm} can be viewed as a special case of Theorems~\ref{main_sf} and \ref{resol_ts}.
	\end{remark}
	\subsection{Properties of solutions to integral equations driven by Radon measures}\label{sec:integral equations}
	In this section, we provide some auxiliary results on the properties of solutions to integral equations driven by Radon measures. For any Radon measure $\nu$ such that the assumptions on Proposition \ref{exist_uni} hold (with $\lambda\equiv0$), we denote by $W^{(q)}_{\nu}$ to the solution to the following integral equation:
	\begin{align}\label{volterra_nu}
	W^{(q)}_{\nu}(x,y)=W^{(q)}(x-y)+\int_{[y,x]}W^{(q)}(x-u)W^{(q)}_{\nu}(u,y)\nu(du),\qquad x,y\in\R.
	\end{align}
	which is unique by Proposition \ref{exist_uni}.
	
	First, we will provide an alternative form of the equation \eqref{volterra_nu}.
	\begin{lemma}\label{alt_lemma}
		For $x,y\in\R$, and a Radon measure $\nu$, we have
		\begin{align}\label{alt_volterra}
			W^{(q)}_\nu(x,y)=W^{(q)}(x-y)+\int_{[y,x]}W_\nu^{(q)}(x,z)W^{(q)}(z-y)\nu(dz).
		\end{align}
	\end{lemma}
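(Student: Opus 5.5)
The strategy is to show that the function $V(x,y)$ defined by the right-hand side of \eqref{alt_volterra} solves the original equation \eqref{volterra_nu}. Uniqueness from Proposition~\ref{exist_uni} then gives $V=W^{(q)}_\nu$. Working directly with \eqref{alt_volterra} is awkward, because the unknown appears there with the \emph{first} argument fixed and the second varying. So I instead use the resolvent (Neumann series) representation of the solution of \eqref{volterra_nu}.

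\textbf{Step 1: kernel notation and the Neumann series.} Fix $y$, $T>y$, and define the kernel $K(x,z):=W^{(q)}(x-z)$ for $z\le x$. Iterating \eqref{volterra_nu} gives
\[
W^{(q)}_\nu(x,y)=\sum_{n\ge0}\int_{y\le z_1\le\cdots\le z_n\le x}W^{(q)}(x-z_n)W^{(q)}(z_n-z_{n-1})\cdots W^{(q)}(z_1-y)\,\nu(dz_1)\cdots\nu(dz_n).
\]
Coincident points may appear here, in which case $W^{(q)}(0)$ enters; this is how the atom condition \eqref{assum_atoms} shows up.

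\textbf{Step 2: convergence of the series.} I expect this to be the main obstacle, because of the atoms. In the unbounded variation case $W^{(q)}(0)=0$, and the bound is routine: $W^{(q)}$ is bounded on $[0,T-y]$ by some $M$, and the ordered simplex has $\nu^{\otimes n}$-mass at most $\nu[y,T]^n/n!$. In the bounded variation case I would first treat the atoms explicitly. The self-loops at an atom $a$ sum to a geometric series with ratio $W^{(q)}(0)\nu\{a\}<1$. After this resummation one is left with chains over \emph{distinct} points, which admit the factorial bound. This is essentially what the proof of Proposition~\ref{exist_uni} must already do. Alternatively, the $n$-th iterate $\int_{[y,x]}K^{(n)}(x,z)\cdots$ can be controlled by the same contraction argument used there. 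Either way the series converges absolutely and locally uniformly.

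\textbf{Step 3: symmetry of the expansion.} The $n$-th term is symmetric in the following sense. Splitting off the last factor $W^{(q)}(x-z_n)$ regroups the series as
\[
W^{(q)}(x-y)+\int W^{(q)}(x-z)\,W^{(q)}_\nu(z,y)\,\nu(dz),
\]
which is \eqref{volterra_nu}. Splitting off the first factor $W^{(q)}(z_1-y)$ instead, the remaining sum over chains from $z_1$ to $x$ is exactly the series for $W^{(q)}_\nu(x,z_1)$. This yields
\[
W^{(q)}_\nu(x,y)=W^{(q)}(x-y)+\int_{[y,x]}W^{(q)}_\nu(x,z)\,W^{(q)}(z-y)\,\nu(dz),
\]
which is \eqref{alt_volterra}. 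Absolute convergence from Step 2 justifies the rearrangement by Fubini/Tonelli.

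\textbf{Care with atoms in the regrouping.} When $z_1=y$ or $z_n=x$ falls on an atom, the closed intervals $[y,x]$ in both equations already include these endpoints. So the two regroupings index exactly the same set of ordered tuples, and no boundary term is lost. In the bounded variation case it is cleaner to run Step 3 on the resummed series of Step 2, or simply to require absolute summability of the original series. The latter follows from the contraction estimate once the atoms are small enough, after which the general case follows by splitting $[y,x]$ into finitely many subintervals at the large atoms.
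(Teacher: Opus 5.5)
Your proof is correct, but it takes a longer route than the paper.

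The paper never expands $W^{(q)}_\nu$. It lets $f(x,y)$ be the right-hand side of \eqref{alt_volterra}, substitutes \eqref{volterra_nu} for $W^{(q)}_\nu(x,z)$ inside the integral, and swaps the order of integration over $\{y\le z\le u\le x\}$. This gives $f(x,y)=W^{(q)}(x-y)+\int_{[y,x]}W^{(q)}(x-u)f(u,y)\,\nu(du)$, and uniqueness in Proposition~\ref{exist_uni} then gives $f=W^{(q)}_\nu$.

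So your reason for avoiding the direct approach is mistaken. One never has to solve \eqref{alt_volterra} for an unknown; one only checks that its right-hand side, built from the already known $W^{(q)}_\nu$, satisfies \eqref{volterra_nu}. That is a three-line computation with no convergence questions and no atom bookkeeping. Your regrouping of the chain series by its first instead of its last factor is the same associativity (Fubini) identity applied term by term. The extra cost is your Step~2.

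Even Step~2 is easier than you make it. Since $W^{(q)}\ge0$, $\nu\ge0$ and $W^{(q)}_\nu\ge0$ (Proposition~\ref{exist_uni}), iterating \eqref{volterra_nu} $N$ times shows that the partial sums are dominated by $W^{(q)}_\nu(x,y)$. Hence the series converges monotonically, its sum solves \eqref{volterra_nu} by Tonelli, and uniqueness identifies it with $W^{(q)}_\nu$. No resummation of self-loops at atoms is needed. Also, your factorial bound in the unbounded variation case holds only on the strict simplex, since the non-strict one carries diagonal mass at atoms of $\nu$; it is $W^{(q)}(0)=0$ that removes those terms.

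What the series buys you is that two facts become immediate: joint measurability of $(x,z)\mapsto W^{(q)}_\nu(x,z)$, and the uniform bound $W^{(q)}_\nu(x,z)\le W^{(q)}_\nu(x,y)$ for $y\le z\le x$. The paper's Fubini step and its use of uniqueness (which needs $f(\cdot,y)$ locally bounded) rely on such facts without comment.
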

	\begin{proof}
		Let us denote by $f(x,y)$ the right-hand side of \eqref{alt_volterra}, then using \eqref{volterra_nu}, and Fubini's Theorem, we obtain
		\begin{align*}
			f(x,y)&=W^{(q)}(x-y)+\int_{[y,x]}\left(W^{(q)}(x-z)+\int_{[z,x]}W^{(q)}(x-u)W^{(q)}_\nu(u,z)\nu(du)\right)W^{(q)}(z-y)\nu(dz)\notag\\
			&=W^{(q)}(x-y)+\int_{[y,x]}W^{(q)}(x-u)W^{(q)}(u-y)\nu(du)\notag\\
			&+\int_{[y,x]}W^{(q)}(x-u)\int_{[y,u]}W^{(q)}_\nu(u,z)W^{(q)}(z-y)\nu(dz)\nu(du)\notag\\
			&=W^{(q)}(x-y)+\int_{[y,x]}W^{(q)}(x-u)f(u,y)\nu(du).
		\end{align*}
		Hence, $f$ is a solution to \eqref{volterra_nu}, and by the uniqueness of the solution to \eqref{volterra_nu}, we conclude that $f(x,y)=W^{(q)}_\nu(x,y)$ for $x,y\in\R$.
	\end{proof}
{In the following result, we establish a connection between the solutions of integral equations driven by two distinct Radon measures.}
	\begin{lemma}\label{equiv_lemma}
		Let $\nu_1$ and $\nu_2$ be two Radon measures. Consider two solutions $W^{(q)}_{\nu_1}$ and $W^{(q)}_{\nu_2}$ to \eqref{volterra_nu} driven by $\nu_1$ and $\nu_2$, respectively. Then, we have
		\begin{align*}
			W^{(q)}_{\nu_1}(x,y)-W^{(q)}_{\nu_2}(x,y)=\int_{[y,x]}W^{(q)}_{\nu_1}(x,u)W^{(q)}_{\nu_2}(u,y)(\nu_1-\nu_2)(du),\qquad x,y\in\R.
		\end{align*}
	\end{lemma}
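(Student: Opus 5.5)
Following the uniqueness argument used in Lemma~\ref{alt_lemma}, I would show that the right-hand side satisfies a Volterra equation and then conclude by Proposition~\ref{exist_uni}. Fix $y$ and set
\[
D(x):=W^{(q)}_{\nu_1}(x,y)-W^{(q)}_{\nu_2}(x,y),\qquad G(x):=\int_{[y,x]}W^{(q)}_{\nu_1}(x,u)W^{(q)}_{\nu_2}(u,y)(\nu_1-\nu_2)(du).
\]
Subtracting the equations \eqref{volterra_nu} for $\nu_1$ and $\nu_2$ gives
\[
D(x)=\int_{[y,x]}W^{(q)}(x-z)W^{(q)}_{\nu_2}(z,y)(\nu_1-\nu_2)(dz)+\int_{[y,x]}W^{(q)}(x-z)D(z)\,\nu_1(dz).
\]
So $D$ solves \eqref{volterra_prop} with measure $\nu_1$ and forcing term $h(x):=\int_{[y,x]}W^{(q)}(x-z)W^{(q)}_{\nu_2}(z,y)(\nu_1-\nu_2)(dz)$. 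This $h$ is locally bounded, since $W^{(q)}_{\nu_2}(\cdot,y)$ is locally bounded and the $\nu_i$ are Radon. Strictly, $h$ depends on $(x,y)$ and not only on $x-y$, but $y$ is fixed and the existence and uniqueness argument of Proposition~\ref{exist_uni} is unaffected. I will use the atom condition \eqref{assum_atoms} for $\nu_1$ with $\lambda=0$; this holds implicitly because $W^{(q)}_{\nu_1}$ is assumed to exist.

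It then suffices to check that $G$ satisfies the same equation. Substitute the alternative form \eqref{alt_volterra} of Lemma~\ref{alt_lemma} for $W^{(q)}_{\nu_1}(x,u)$ into $G$:
\[
W^{(q)}_{\nu_1}(x,u)=W^{(q)}(x-u)+\int_{[u,x]}W^{(q)}(x-z)W^{(q)}_{\nu_1}(z,u)\nu_1(dz).
\]
Here I use the original form \eqref{volterra_nu} in the first variable, with $u$ as base point. The first term gives exactly $h(x)$. The second term gives the double integral
\[
\int_{[y,x]}\int_{[u,x]}W^{(q)}(x-z)W^{(q)}_{\nu_1}(z,u)\nu_1(dz)\,W^{(q)}_{\nu_2}(u,y)(\nu_1-\nu_2)(du).
\]
By Fubini, over the region $y\le u\le z\le x$, this becomes
\[
\int_{[y,x]}W^{(q)}(x-z)\Big(\int_{[y,z]}W^{(q)}_{\nu_1}(z,u)W^{(q)}_{\nu_2}(u,y)(\nu_1-\nu_2)(du)\Big)\nu_1(dz)=\int_{[y,x]}W^{(q)}(x-z)G(z)\,\nu_1(dz).
\]
Hence $G=h+\int W^{(q)}(x-\cdot)G\,d\nu_1$, and uniqueness on every $[y,T]$ gives $D=G$.

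The main obstacle is justifying Fubini with the signed measure $\nu_1-\nu_2$ and keeping track of the closed endpoints. For the first point, split $\nu_1-\nu_2$ into $\nu_1$ and $-\nu_2$; all integrands are locally bounded on the compact triangle and the measures are Radon, so absolute integrability holds. For the second, note that the diagonal $u=z$ is included consistently in both orderings of the closed region $\{y\le u\le z\le x\}$. At atoms this diagonal contributes terms involving $W^{(q)}(0)$ and $W^{(q)}_{\nu_1}(u,u)$, but these appear identically on both sides, so no correction term arises.

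For $x<y$ both sides vanish, because $W^{(q)}_{\nu}(x,y)=0$ there: the forcing term $W^{(q)}(x-y)$ is zero and the integral runs over an empty set.
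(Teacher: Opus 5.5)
Your proof is correct, but it is organized differently from the paper's.

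The paper never invokes uniqueness at this stage. It expands $\int_{[y,x]}W^{(q)}_{\nu_1}(x,z)W^{(q)}_{\nu_2}(z,y)\nu_2(dz)$ by inserting the second-variable equation \eqref{alt_volterra} for $W^{(q)}_{\nu_1}(x,z)$, swaps the integrals, and uses \eqref{volterra_nu} for $\nu_2$ to collapse the resulting $\nu_2$-integrals. It then applies \eqref{alt_volterra} once more to rewrite $\int_{[y,x]}W^{(q)}_{\nu_1}(x,u)W^{(q)}(u-y)\nu_1(du)$ as $W^{(q)}_{\nu_1}(x,y)-W^{(q)}(x-y)$, after which the identity drops out algebraically.

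You instead show that $D$ and $G$ solve the same linear equation in $x$, driven by the non-negative measure $\nu_1$, with the signed part $\nu_1-\nu_2$ absorbed into the forcing term $h$. You then conclude by the uniqueness part of Proposition~\ref{exist_uni}, which is the same device the paper uses to prove Lemma~\ref{alt_lemma} itself.

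Your sentence announces a substitution of \eqref{alt_volterra}, but the formula you actually insert is \eqref{volterra_nu} with base point $u$. So your route needs only the first-variable equation and bypasses Lemma~\ref{alt_lemma}. The price is that you must place $G$ in the uniqueness class, i.e.\ know that $W^{(q)}_{\nu_1}(z,u)$ is bounded jointly on $\{y\le u\le z\le T\}$. This is true: Gronwall's inequality applied to \eqref{volterra_no_atom} gives a bound uniform in the base point $u\in[y,T]$, or you can use Lemma~\ref{lemma_mon}. It is also the same fact the paper uses tacitly to justify Fubini.

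One small correction: condition \eqref{assum_atoms} for $\nu_1$ does not follow from the mere existence of $W^{(q)}_{\nu_1}$. An atom with $W^{(q)}(0)\nu_1\{a\}>1$ still yields a solution, just a negative one. The condition is the standing hypothesis under which $W^{(q)}_{\nu}$ is defined in that subsection. The paper's proof relies on it just as much, through the uniqueness behind Lemma~\ref{alt_lemma}.
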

	\begin{proof}
		For $x,y\in\R$, we note that, by \eqref{volterra_nu} together with Lemma \ref{alt_lemma} {and an application of Fubini's theorem,}
		\begin{align*}
			\int_{[y,x]}W^{(q)}_{\nu_1}(x,z)&W^{(q)}_{\nu_2}(z,y)\nu_2(dz)=\int_{[y,x]}W^{(q)}(x-z)W^{(q)}_{\nu_2}(z,y)\nu_2(dz)\notag\\
			&+\int_{[y,x]}\left(\int_{[z,x]}W^{(q)}_{\nu_1}(x,u)W^{(q)}(u-z)\nu_1(du)\right)W^{(q)}_{\nu_2}(z,y)\nu_2(dz)\notag\\
			&=\int_{[y,x]}W^{(q)}(x-z)W^{(q)}_{\nu_2}(z,y)\nu_2(dz)\notag\\
			&+\int_{[y,x]}W^{(q)}_{\nu_1}(x,u)\int_{[y,u]}W^{(q)}(u-z)W^{(q)}_{\nu_2}(z,y)\nu_2(dz)\nu_1(du)\notag\\
			&=W^{(q)}_{\nu_2}(x,y)-W^{(q)}(x-y)\notag\\
			&+\int_{[y,x]}W^{(q)}_{\nu_1}(x,u)\left(W^{(q)}_{\nu_2}(u,y)-W^{(q)}(u-y)\right)\nu_1(du)\notag\\
			&=W^{(q)}_{\nu_2}(x,y)-W^{(q)}_{\nu_1}(x,y)+\int_{[y,x]}W^{(q)}_{\nu_1}(x,u)W^{(q)}_{\nu_2}(u,y)\nu_1(du),
		\end{align*}
		which implies the result.
	\end{proof}
		In the last result of this section, we state the monotonicity property of solutions to such integral equations.
	\begin{lemma}\label{lemma_mon}
		Consider a Radon measure $\nu$, and the unique solution $W^{(q)}_{\nu}$ to {\eqref{volterra_nu}}.
		Then for fixed $y\in\R$ the mapping $x\mapsto W^{(q)}_{\nu}(x,y)$ 
		is non-decreasing. Additionally, for fixed $x\in\R$ the mapping $y\mapsto W^{(q)}_{\nu}(x,y)$ is non-increasing.
	\end{lemma}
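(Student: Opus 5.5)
Both monotonicity claims follow from the explicit integral equations, using that $\nu\ge0$, that $W^{(q)}\ge0$ is non-decreasing on $\R$ (it vanishes on $(-\infty,0)$ and is increasing on $[0,\infty)$), and that $W^{(q)}_\nu\ge0$ by the positivity part of Proposition~\ref{exist_uni}. Throughout, $W^{(q)}_\nu(x,y)=W^{(q)}(x-y)=0$ for $x<y$. This holds because the integration domain $[y,x]$ is then empty, so the equation extends to all $x,y\in\R$ consistently.

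\emph{Monotonicity in $x$.} Fix $y$ and take $x_1\le x_2$. Writing the equation \eqref{volterra_nu} at both points and subtracting gives
\begin{align*}
W^{(q)}_\nu(x_2,y)-W^{(q)}_\nu(x_1,y)&=\big(W^{(q)}(x_2-y)-W^{(q)}(x_1-y)\big)\\
&\quad+\int_{[y,x_1]}\big(W^{(q)}(x_2-u)-W^{(q)}(x_1-u)\big)W^{(q)}_\nu(u,y)\nu(du)\\
&\quad+\int_{(x_1,x_2]}W^{(q)}(x_2-u)W^{(q)}_\nu(u,y)\nu(du).
\end{align*}
Each of the three terms is non-negative, because $W^{(q)}$ is non-decreasing and non-negative, $W^{(q)}_\nu\ge0$, and $\nu\ge0$. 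Hence $x\mapsto W^{(q)}_\nu(x,y)$ is non-decreasing. If $x_1<y\le x_2$, the left-hand value is $0$ and the claim is just non-negativity.

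\emph{Monotonicity in $y$.} Here I would use the alternative form of Lemma~\ref{alt_lemma}, in which $y$ enters only through $W^{(q)}(\cdot-y)$ and the lower limit of integration. Fix $x$ and take $y_1\le y_2$. Then
\begin{align*}
W^{(q)}_\nu(x,y_1)-W^{(q)}_\nu(x,y_2)&=\big(W^{(q)}(x-y_1)-W^{(q)}(x-y_2)\big)\\
&\quad+\int_{[y_2,x]}W^{(q)}_\nu(x,z)\big(W^{(q)}(z-y_1)-W^{(q)}(z-y_2)\big)\nu(dz)\\
&\quad+\int_{[y_1,y_2)}W^{(q)}_\nu(x,z)W^{(q)}(z-y_1)\nu(dz).
\end{align*}
All three terms are again non-negative, since $W^{(q)}(\cdot-y)$ is non-increasing in $y$ and $W^{(q)}_\nu(x,z)\ge0$.

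The only point requiring care is the positivity $W^{(q)}_\nu\ge0$ on all relevant pairs, and the boundary convention when $z=x$ or $x<y$. Positivity comes from Proposition~\ref{exist_uni} with $h=W^{(q)}\ge0$ and $\lambda=0$, applied on $[y,T]$ for arbitrary $T$. The boundary convention is harmless because atoms at the endpoints enter with $W^{(q)}(0)\ge0$.
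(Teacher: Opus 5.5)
Your proposal is correct and matches the paper's proof essentially step for step. The $x$-monotonicity comes from subtracting \eqref{volterra_nu} at $x_1\le x_2$ and splitting into the same three non-negative terms. The $y$-monotonicity comes from the alternative form in Lemma~\ref{alt_lemma}, with non-negativity of $W^{(q)}_\nu$ supplied by Proposition~\ref{exist_uni}. As in the paper, this tacitly requires $\nu$ to satisfy the atom condition \eqref{assum_atoms} with $\lambda\equiv0$, so that the solution exists and is non-negative.
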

	\begin{proof}
		(i) Fix $y\in\R$ and let $x_2\geq x_1\geq y$, then by \eqref{volterra_nu} we obtain
		\begin{align*}
			W_{\nu}^{(q)}(x_2,y)-W_{\nu}^{(q)}(x_1,y)&=W^{(q)}(x_2-y)-W^{(q)}(x_1-y)+\int_{(x_1,x_2]}W^{(q)}(x_2-z)W^{(q)}_{\nu}(z,y)\nu(dz)\notag\\
			&+\int_{[y,x_1]}(W^{(q)}(x_2-z)-W^{(q)}(x_1-z))W^{(q)}_{\nu}(z,y)\nu(dz)\geq0,
		\end{align*}
		where the last inequality follows from the fact that the mapping $x\mapsto W^{(q)}(x)$ is non-decreasing, and the fact that since $W^{(q)}(x)\geq0$ for all $x\in\R$, Proposition \ref{exist_uni} implies that $W^{(q)}_{\nu}(x,y)\geq0$ for $x\geq y$. Therefore,  $W_{\nu}^{(q)}(x_2,y)\geq W_{\nu}^{(q)}(x_1,y)$ which proves the claim.
		
		(ii) Fix $x\in\R$ and let {$x\geq y_2\geq y_1$}, then by Lemma \ref{alt_lemma}, we obtain
		\begin{align}\label{volterra_bounded}
			W_{\nu}^{(q)}(x,y_1)-W_{\nu}^{(q)}(x,y_2)&=W^{(q)}(x-y_1)-W^{(q)}(x-y_2)\\
			&+\int_{[y_2,x]}W^{(q)}_{\nu}(x,z)(W^{(q)}(z-y_1)-W^{(q)}(z-y_2))\nu(dz)\notag\\&+\int_{[y_1,y_2)}W^{(q)}_{\nu}(x,z)W^{(q)}(z-y_1)\nu(dz)\geq0. 
		\end{align}
		where the last inequality follows from the fact that the mapping $x\mapsto W^{(q)}(x)$ is non-decreasing and that \linebreak $W^{(q)}_{\nu}(x,z)\geq0$ for $x\geq z$. Therefore $W_{\nu}^{(q)}(x,y_1)\geq W_{\nu}^{(q)}(x,y_2)$ which proves the claim.
	\end{proof}
	{In the next result, we characterize} the function $Z^{(q)}_{\nu}$ given in \eqref{sf_Z} as the solution to an integral equation driven by a Radon measure $\nu$.
	\begin{lemma}\label{sf_Z_volterra}
	The function $Z^{(q)}_{\nu}$ defined in \eqref{sf_Z} is the unique solution to the integral equation
	\begin{align}\label{volterra_sf_Z}
		Z^{(q)}_{\nu}(x,c)=Z^{(q)}(x-c)+\int_{[c,x]}W^{(q)}(x-u)Z^{(q)}_{\nu}(u,c)\nu(du),\qquad x\geq c.
	\end{align}
	\end{lemma}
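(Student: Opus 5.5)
Starting from the definition \eqref{sf_Z}, we substitute it into the right-hand side of \eqref{volterra_sf_Z} and, after Fubini, identify the result with $Z^{(q)}_\nu(x,c)$. Uniqueness then comes from Proposition~\ref{exist_uni}, applied with $h=Z^{(q)}(\cdot)$ (locally bounded) and $\lambda\equiv0$, under the standing assumption on the atoms of $\nu$ that makes $W^{(q)}_\nu$ well defined.

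\emph{Step 1: expand the defining formula.} Apply Lemma~\ref{alt_lemma}, in the form $W^{(q)}_\nu(x,y)=W^{(q)}(x-y)+\int_{[y,x]}W^{(q)}(x-u)W^{(q)}_\nu(u,y)\nu(du)$ (the original equation \eqref{volterra_nu}), to the two integrals in \eqref{sf_Z}. This gives
\begin{align*}
Z^{(q)}_\nu(x,c)&=1+\int_{[c,x]}W^{(q)}(x-y)\nu(dy)+q\int_c^xW^{(q)}(x-y)dy\\
&\quad+\int_{[c,x]}\int_{[y,x]}W^{(q)}(x-u)W^{(q)}_\nu(u,y)\nu(du)\nu(dy)+q\int_c^x\int_{[y,x]}W^{(q)}(x-u)W^{(q)}_\nu(u,y)\nu(du)dy .
\end{align*}
The first and third terms together equal $1+q\overline W^{(q)}(x-c)=Z^{(q)}(x-c)$.

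\emph{Step 2: swap the order of integration.} Exchange the integrals in the two double integrals, which is justified by Fubini since all integrands are non-negative and locally bounded and $\nu$ is Radon. They become
\[
\int_{[c,x]}W^{(q)}(x-u)\left(\int_{[c,u]}W^{(q)}_\nu(u,y)\nu(dy)+q\int_c^uW^{(q)}_\nu(u,y)dy\right)\nu(du).
\]
The bracket equals $Z^{(q)}_\nu(u,c)-1$. The term $-1$ inside, integrated against $W^{(q)}(x-u)\nu(du)$, cancels exactly the second term of Step~1, $\int_{[c,x]}W^{(q)}(x-y)\nu(dy)$. What remains is $Z^{(q)}(x-c)+\int_{[c,x]}W^{(q)}(x-u)Z^{(q)}_\nu(u,c)\nu(du)$, which is \eqref{volterra_sf_Z}.

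\emph{Main obstacle.} The delicate point is the bookkeeping of closed intervals when $\nu$ has atoms, including possibly at $c$ or $x$. The region $\{c\le y\le u\le x\}$ must be described consistently as $[y,x]$ in $u$ and $[c,u]$ in $y$, so that diagonal atom contributions $y=u$ are counted exactly once; with closed intervals on both sides this matches. For the Lebesgue part in $y$ the diagonal has no mass.

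Uniqueness follows from Proposition~\ref{exist_uni}. Local boundedness of $Z^{(q)}_\nu(\cdot,c)$ on $[c,T]$ holds because $W^{(q)}_\nu$ is locally bounded and $\nu$ is Radon.
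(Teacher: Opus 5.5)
Your proposal is correct and follows the paper's proof essentially line by line. You substitute \eqref{volterra_nu} into both integrals of \eqref{sf_Z}, exchange the order of integration over $\{c\le y\le u\le x\}$, regroup into $Z^{(q)}(x-c)+\int_{[c,x]}W^{(q)}(x-u)Z^{(q)}_{\nu}(u,c)\nu(du)$, and get uniqueness from Proposition~\ref{exist_uni} (one citation slip: the expansion you use is \eqref{volterra_nu} itself, not the alternative form in Lemma~\ref{alt_lemma}, which does not affect the argument).
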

	\begin{proof}
	Note that uniqueness of the solution to \eqref{volterra_sf_Z} follows from Proposition \ref{exist_uni}. {Now, substituting \eqref{volterra_nu} into \eqref{sf_Z}, {using the definition of $Z^{(q)}$ in \eqref{fun_z_sf},} and applying Fubini's theorem, we obtain, for $x \geq c$, 
	}
	\begin{align*}
		Z^{(q)}_{\nu}(x,c)&=1+\int_{[c,x]}W^{(q)}(x-z)\nu(dz)+\int_{[c,x]}\int_{[z,x]}W^{(q)}(x-u)W^{(q)}_{\nu}(u,z)\nu(du)\nu(dz)\notag\\
		&+q\int_c^xW^{(q)}(x-z)dz+q\int_c^x\int_{[z,x]}W^{(q)}(x-u)W^{(q)}_{\nu}(u,z)\nu(du)dz\notag\\
		&=1+\int_{[c,x]}W^{(q)}(x-z)\nu(dz)+\int_{[c,x]}W^{(q)}(x-u)\int_{[c,u]}W^{(q)}_{\nu}(u,z)\nu(dz)\nu(du)\notag\\
		&+q\int_c^xW^{(q)}(x-z)dz+q\int_{[c,x]}W^{(q)}(x-u)\int_c^uW^{(q)}_{\nu}(u,z)dz\nu(du)\notag\\
		&=Z^{(q)}(x-c)+\int_{[c,x]}W^{(q)}(x-u)\left(1+\int_{[c,u]}W^{(q)}_{\nu}(u,z)\nu(dz)+q\int_c^uW^{(q)}_{\nu}(u,z)dz\right)\nu(du)\notag\\
		&=Z^{(q)}(x-c)+\int_{[c,x]}W^{(q)}(x-u)Z^{(q)}_{\nu}(u,c)\nu(du),
	\end{align*}
	which proves the claim.
	\end{proof}
	\subsection{Convergence of solutions of integral equations driven by Poisson random measures}\label{sec:convergence_sf}
			Fix a Radon measure $\nu$ on $\R$. We consider the decomposition $\nu=\nu^a+\nu^d$, where $\nu^a$ denotes the atomic part of $\nu$ and $\nu^d$ its diffuse part. Let $N_n(dz)$ be a Poisson random measure on $\R$ with intensity measure $n\nu^d(dz)$, and define, for each $n\in\N$,
			\begin{align}\label{measure_nu_n}
				\nu_n(dz)=\nu^a(dz)+\frac{1}{n}N_n(dz).
			\end{align} 
			{Fix $T>0$. For each $n\geq 1$ and $x\in[0,T]$, let $\mathcal{G}_x^n:=\sigma\{N_n[0,y]:y\leq x\}$ be the natural filtration associated with $N_n$, {and define the compensated Poisson random measure by $\tilde{N}_n(dz):=N_n(dz)-n\nu^d(dz)$}. 
				
			Proceeding as in Section II.3 of \cite{IW} (see also Theorem 4.2.3 in
			\cite{APP}), if $F:[0,T]\times\Omega\to\R$ is a predictable process with
			respect to $(\mathcal{G}_x^n)_{x\in[0,T]}$ such that
			\[
			\E\left[\int_{[0,T]}F^2(y)n\nu^d(dy)\right]<\infty,
			\]
			then the process $\left(\int_{[0,x]}F(y)\tilde{N}_n(dy)\right)_{x\geq0}$ is a square integrable $\mathcal{G}_x^n$-martingale, such that
			\begin{align}\label{square_integrable}
			\E\left[\left|\int_{[0,x]}F(y)\tilde{N}_n(dy)\right|^2\right]=
			\E\left[\int_{[0,x]}F^2(y)n\nu^d(dy)\right],\qquad x\in[0,T].
			\end{align}}
		{
		Then, by \eqref{square_integrable}, we have
		\begin{align}\label{square_integrable_ineq}
		\E\left[\left|\int_{[0,x]}F(y)N_n(dy)\right|^2\right]&\leq2\left(\E\left[\left|\int_{[0,x]}F(y)\tilde{N}_n(dy)\right|^2\right]+\E\left[\left|\int_{[0,x]}F(y)n\nu^d(dy)\right|^2\right]\right)\notag\\
		&\leq 2\left(\E\left[\int_{[0,x]}F^2(y)n\nu^d(dy)\right]+\E\left[\left|\int_{[0,x]}F(y)n\nu^d(dy)\right|^2\right]\right).
		\end{align}}
			On the other hand, since $\nu(dz)$ is a Radon measure, we have
			\[
			\E\left[\nu_n[y,T]\right]=\nu^a[y,T]+\frac{1}{n}n\nu^d[y,T]=\nu[y,T]<\infty.
			\]
			Hence, $\nu_n[y,T]<\infty$ almost surely for all $n\ge1$. 
			
			Therefore, by Remark \ref{exi_uni_T}, for each $n\ge1$ there exists almost surely
			a unique solution $W_{{\mathbf{T}}(\nu_n)}^{(q)}(\cdot,y)$ on $[y,T]$ to the
			integral equation
			\begin{align}\label{volterra_approx}
				W_{{\mathbf{T}}(\nu_n)}^{(q)}(x,y)&=W^{(q)}(x-y)+\int_{[y,x]}W^{(q)}(x-z)W_{{\mathbf{T}}(\nu_n)}^{(q)}(z,y){\mathbf{T}}(\nu_n)(dz),\qquad x\in[y,T],
			\end{align}
			where ${\mathbf{T}}$ and $\nu_n$ are given by \eqref{oper_T} and \eqref{measure_nu_n}, respectively.
			
			Similarly, since $\nu[y,T]<\infty$, there exists a unique solution $W_{{\mathbf{T}}(\nu)}^{(q)}(\cdot,y)$ on $[y,T]$ to 
			\begin{align}\label{volterra_limit}
				W_{{\mathbf{T}}(\nu)}^{(q)}(x,y)&=W^{(q)}(x-y)+\int_{[y,x]}W^{(q)}(x-z)W_{{\mathbf{T}}(\nu)}^{(q)}(z,y){\mathbf{T}}(\nu)(dz),\qquad x\in[y,T].
			\end{align}
			{\begin{lemma}\label{conv_sf}
				Fix $T>y$. Consider the sequence
				$\bigl(W_{{\mathbf{T}}(\nu_n)}^{(q)}(\cdot,y)\bigr)_{n\ge1}$,
				where for each $n\ge1$,
				$W_{{\mathbf{T}}(\nu_n)}^{(q)}(\cdot,y)$ is a.s. the unique solution to
				\eqref{volterra_approx}. Then,
				\[				
				\E\left[\sup_{x\in[y,T]}
				\left|
				W_{{\mathbf{T}}(\nu_n)}^{(q)}(x,y)
				-
				W_{{\mathbf{T}}(\nu)}^{(q)}(x,y)
				\right|^2
				\right]
				\longrightarrow 0,
				\qquad \text{as $n\to\infty$},
				\]
				where $W_{{\mathbf{T}}(\nu)}^{(q)}(\cdot,y)$ is the unique solution to
				\eqref{volterra_limit}.
			\end{lemma}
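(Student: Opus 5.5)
Since $N_n$ has diffuse intensity $n\nu^d$, almost surely its atoms are simple, have unit mass, and are disjoint from the atoms of $\nu^a$. Each atom of $\frac1n N_n$ therefore has mass $1/n$, and by \eqref{oper_T}
\[
\mathbf{T}(\nu_n)=\mathbf{T}(\nu^a)+\kappa_n N_n,\qquad \kappa_n:=\mathbf{d}\bigl(1-e^{-1/(n\mathbf{d})}\bigr),
\]
with $\kappa_n=1/n$ in the unbounded variation case. Also $\mathbf{T}(\nu)=\mathbf{T}(\nu^a)+\nu^d$ and $n\kappa_n\to1$.

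The plan is to subtract \eqref{volterra_limit} from \eqref{volterra_approx}. Writing $D_n(x):=W^{(q)}_{\mathbf{T}(\nu_n)}(x,y)-W^{(q)}_{\mathbf{T}(\nu)}(x,y)$ and $g(z):=W^{(q)}_{\mathbf{T}(\nu)}(z,y)$, we get
\[
D_n(x)=\int_{[y,x]}W^{(q)}(x-z)D_n(z)\,\mathbf{T}(\nu_n)(dz)+R_n(x),\qquad
R_n(x):=\int_{[y,x]}W^{(q)}(x-z)g(z)\bigl(\kappa_n N_n-\nu^d\bigr)(dz).
\]
The function $g$ is deterministic, non-negative and bounded on $[y,T]$ by Lemma~\ref{lemma_mon}. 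We split $\kappa_nN_n-\nu^d=\kappa_n\tilde N_n+(n\kappa_n-1)\nu^d$.

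The $\nu^d$ part of $R_n$ is deterministic and bounded by $|n\kappa_n-1|\,W^{(q)}(T-y)\sup g\,\nu^d[y,T]\to0$.

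For the martingale part, the kernel depends on $x$, which prevents a direct use of Doob's inequality. To handle this, write $W^{(q)}(x-z)=W^{(q)}(0)+\int_{(0,x-z]}W^{(q)}(dr)$ and apply Fubini. This gives
\[
\int_{[y,x]}W^{(q)}(x-z)g(z)\tilde N_n(dz)=W^{(q)}(0)M_n(x)+\int_{(0,x-y]}M_n(x-r)\,W^{(q)}(dr),\qquad M_n(s):=\int_{[y,s]}g\,d\tilde N_n .
\]
Hence $\sup_{x\in[y,T]}|\kappa_n\int W^{(q)}(x-\cdot)g\,d\tilde N_n|\le W^{(q)}(T-y)\,\kappa_n\sup_{s\le T}|M_n(s)|$. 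By Doob's (or BDG) inequality together with \eqref{square_integrable}, the second moment of this supremum is $O(\kappa_n^2 n)=O(1/n)$. The same argument, using BDG with the fourth-moment formula for compensated Poisson integrals (whose jumps here have size $\kappa_n$), gives $\E[\sup_x|R_n(x)|^4]\to0$.

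Next, a pathwise Gronwall inequality for Stieltjes measures with atoms controls the first term. Since $W^{(q)}\le W^{(q)}(T-y)=:C$ on $[0,T-y]$, we have $|D_n(x)|\le \sup|R_n|+C\int_{[y,x]}|D_n(z)|\,\mathbf{T}(\nu_n)(dz)$. At an atom $z$ the integral includes $D_n(z)$ itself. This is absorbed exactly as in Proposition~\ref{exist_uni} and Remark~\ref{exi_uni_T}: solving for the atom value costs a factor $(1-W^{(q)}(0)\mathbf{T}(\nu_n)\{z\})^{-1}$, which equals $e^{p_i/\mathbf{d}}$ at atoms of $\nu^a$ and $e^{1/(n\mathbf{d})}\le e^{1/\mathbf{d}}$ at atoms of $N_n$. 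The Gronwall lemma for right-continuous increasing integrators then yields
\[
\sup_{x\in[y,T]}|D_n(x)|\le K\,\sup_{x\in[y,T]}|R_n(x)|\,\exp\bigl(K\,\mathbf{T}(\nu_n)[y,T]\bigr),
\]
where $K$ depends only on $W^{(q)}$, $T-y$ and the atoms of $\nu^a$, and is finite by \eqref{assum_prop_ex_uni}.

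Squaring and applying Cauchy--Schwarz gives
\[
\E\Bigl[\sup|D_n|^2\Bigr]\le K^2e^{2K\mathbf{T}(\nu^a)[y,T]}\,\E\Bigl[\sup|R_n|^4\Bigr]^{1/2}\,\E\bigl[e^{4K\kappa_nN_n[y,T]}\bigr]^{1/2}.
\]
The Poisson moment generating function gives $\E[e^{4K\kappa_nN_n[y,T]}]=\exp\bigl(n\nu^d[y,T](e^{4K\kappa_n}-1)\bigr)\to e^{4K\nu^d[y,T]}$, which is bounded in $n$. Combined with $\E[\sup|R_n|^4]\to0$, this proves the lemma.

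I expect the main obstacle to be the Gronwall step with atoms. It requires checking that the discrete resolvent bound holds uniformly, including at atoms of $N_n$ where $W^{(q)}(0)>0$ in the bounded variation case, and keeping the dependence of the exponent linear in the random total mass, so that the exponential moment remains controlled. A second point needing care is the fourth-moment BDG bound for $M_n$, and confirming that all integrands are deterministic (hence predictable), which is why the decomposition is done around the deterministic $g$ rather than via Lemma~\ref{equiv_lemma}.
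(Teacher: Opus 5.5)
Your proof is correct, and it follows a genuinely different route from the paper's.

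\textbf{The paper's route.} It first rewrites \eqref{volterra_approx} in predictable form: it replaces $W^{(q)}_{\mathbf{T}(\nu_n)}(z,y)$ by $W^{(q)}_{\mathbf{T}(\nu_n)}(z-,y)$ at Poisson atoms and factors out $K_a(x)$ at the atoms of $\nu^a$. It then proves a bound on $\E\bigl[(W^{(q)}_{\mathbf{T}(\nu_n)}(x,y))^2\bigr]$, uniform in $n$, using the stopping times $\tau_m$ and Gronwall. Only after that does it subtract \eqref{volterra_limit}. Its noise term is a compensated Poisson integral whose integrand $W^{(q)}_{\mathbf{T}(\nu_n)}(z-,y)$ is random and predictable, and its final Gronwall step is taken in expectation against the deterministic measure $\nu$.

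\textbf{Your route.} You centre at the deterministic limit $g$, so the noise $R_n$ has a deterministic integrand. This means you need no predictable version, no localization, and no a priori moment bound for $W^{(q)}_{\mathbf{T}(\nu_n)}$. The cost is that your Gronwall step is pathwise against the random measure $\mathbf{T}(\nu_n)$, which produces the random factor $\exp(K\kappa_n N_n[y,T])$. You control this correctly: the absorbed atom factors are uniformly bounded by $\max(C_a(T,y),e^{1/\mathbf{d}})$, and you combine fourth moments of $R_n$, Cauchy--Schwarz, and the Poisson moment generating function with $n\kappa_n\to1$.

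\textbf{The Fubini device.} Your decomposition $W^{(q)}(x-z)=W^{(q)}(0)+\int_{(0,x-z]}W^{(q)}(dr)$ gives
$\sup_{x\in[y,T]}\bigl|\int_{[y,x]}W^{(q)}(x-z)F(z)\,\tilde N_n(dz)\bigr|\le W^{(q)}(T-y)\sup_{s\in[y,T]}\bigl|\int_{[y,s]}F\,d\tilde N_n\bigr|$.
This is more than a convenience; it fills a gap in the paper's step (iii). There, Doob's maximal inequality is applied directly to $u\mapsto\int_{[y,u]}W^{(q)}(u-z)W^{(q)}_{\mathbf{T}(\nu_n)}(z-,y)\,\mathbf{d}(e^{1/n\mathbf{d}}-1)\,\tilde N_n(dz)$. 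That process is not a martingale, because the kernel depends on $u$. Replacing $W^{(q)}(u-z)$ by $W^{(q)}(T-z)$ inside a signed integral is also not justified as written. Your device works equally well for the paper's random predictable integrand, since pathwise Fubini applies there too. It yields exactly the bound the paper goes on to use.
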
}
			\begin{proof}
				
				(i) {In this step, we derive an alternative form of the equations \eqref{volterra_approx} and \eqref{volterra_limit}, suitable for applying Gronwall's lemma in the form given in \cite[Appendix, Theorem 5.1]{EK}. To this end, note that, since the measure $\nu^d$ is diffuse, it follows from \cite[Theorem VI.2.17]{Cinlar} that the Poisson random measure {$N_n$} is almost surely a random counting measure. In particular, each of its
					atoms has unit mass almost surely.} Therefore,
				\begin{align}\label{atoms_Poisson}
				{\mathbf{T}}\left(\frac{1}{n}N_n\right)(dz)={{\mathbf{d}}(1-e^{-1/n{\mathbf{d}}})N_n(dz)}\qquad \text{$\mathbb{P}$-a.s.}
				\end{align}
				Let $A^a:=\{x\in\R:\nu^a(\{x\})>0\}$ denote the set of atoms of $\nu^a$.
				Then, combining \eqref{volterra_approx} with \eqref{oper_T} and
				\eqref{atoms_Poisson}, we deduce that for every $x\ge y$, 
				\begin{align}\label{jump_sf}
				W_{{\mathbf{T}}(\nu_n)}^{(q)}(x,y)-W_{{\mathbf{T}}(\nu_n)}^{(q)}(x-,y)&=W^{(q)}(0)W_{{\mathbf{T}}(\nu_n)}^{(q)}(x,y){\mathbf{T}}(\nu_n)(\{x\})\notag\\
				&=W_{{\mathbf{T}}(\nu_n)}^{(q)}(x,y)\left((1-e^{-\nu^a(\{x\})/{\mathbf{d}}})1_{\{x\in A^a\}}+(1-e^{-1/n{\mathbf{d}}})N_n(\{x\})\right),\quad  \text{{$\mathbb{P}$-a.s.}}
				\end{align}
				Indeed, since $\nu^d$ is diffuse, the sets of atoms of the measure $\nu^a$ and $N_n$ are disjoint almost surely. 
				Therefore, by \eqref{jump_sf} we obtain {for $x\in[y,T]$,}
								{\begin{align}\label{jump_sf_2}
				W_{\mathbf{T}(\nu_n)}^{(q)}(x,y)=W_{\mathbf{T}(\nu_n)}^{(q)}(x-,y)\left\{\left(e^{\nu^a(\{x\})/{\mathbf{d}}}1_{\{x\in A^a\}}+e^{1/n{\mathbf{d}}}N_n(\{x\})\right)\lor 1\right\},\qquad \text{{$\mathbb{P}$-a.s.}} 
				\end{align}
				Since the atoms of $\nu^a$ and $N_n$ are almost surely distinct, it follows from
				\eqref{atoms_Poisson} and \eqref{jump_sf_2} that \eqref{volterra_approx} can be
				rewritten in the following form: 
				\begin{align*}
				W_{{\mathbf{T}}(\nu_n)}^{(q)}(x,y)&=W^{(q)}(x-y)+\int_{[y,x]}W^{(q)}(x-z)W_{{\mathbf{T}}(\nu_n)}^{(q)}(z,y){\mathbf{T}}(\nu^a)(dz)\notag\\
				&+\int_{[y,x]}W^{(q)}(x-z)W_{{\mathbf{T}}(\nu_n)}^{(q)}(z,y){\mathbf{T}}\left(\frac{1}{n}N_n\right)(dz)\notag\\
				&=W^{(q)}(x-y)+\int_{[y,x)}W^{(q)}(x-z)W_{{\mathbf{T}}(\nu_n)}^{(q)}(z,y){\mathbf{T}}(\nu^a)(dz)+W^{(q)}(0)W_{{\mathbf{T}}(\nu_n)}^{(q)}(x,y){\mathbf{T}}(\nu^a)(\{x\})\notag\\
				&+\int_{[y,x]}W^{(q)}(x-z)W_{{\mathbf{T}}(\nu_n)}^{(q)}(z-,y){\mathbf{d}}(e^{1/n{\mathbf{d}}}-1)N_n(dz)
				,\qquad \text{{$x\in[y,T]$, $\mathbb{P}$-a.s.}}
				\end{align*}
				Solving for $W_{{\mathbf{T}}(\nu_n)}^{(q)}(x,y)$ yields
				\begin{align}\label{pred_volterra}
					W_{{\mathbf{T}}(\nu_n)}^{(q)}(x,y)&=K_a (x)\Bigg[W^{(q)}(x-y)+\int_{[y,x)}W^{(q)}(x-z)W_{{\mathbf{T}}(\nu_n)}^{(q)}(z,y){\mathbf{T}}(\nu^a)(dz)\notag\\
					&+\int_{[y,x]}W^{(q)}(x-z)W_{{\mathbf{T}}(\nu_n)}^{(q)}(z-,y){\mathbf{d}}(e^{1/n{\mathbf{d}}}-1)N_n(dz)\Bigg]
					,\qquad \text{{$x\in[y,T]$, \ $\mathbb{P}$-a.s.,}}
				\end{align}
				where $K_a(x):=(1-W^{(q)}(0){\mathbf{T}}(\nu^a)(\{x\}))^{-1}$.
				Similarly, by \eqref{volterra_limit}, we have for $x\in[y,T]$,
				\begin{align}\label{pred_volterra_2}
					W_{{\mathbf{T}}(\nu)}^{(q)}(x,y)&=K_a (x)\Bigg[W^{(q)}(x-y)+\int_{[y,x)}W^{(q)}(x-z)W_{{\mathbf{T}}(\nu)}^{(q)}(z,y){\mathbf{T}}(\nu^a)(dz)\notag\\
					&+\int_{[y,x]}W^{(q)}(x-z)W_{{\mathbf{T}}(\nu)}^{(q)}(z-,y)\nu^d(dz)\Bigg]
					,
				\end{align}}

				(ii) In this step, we derive estimates for  $\E\big[(W_{\mathbf{T}(\nu_n)}^{(q)}(x,y))^2\big]$, for $n\geq 1$. We first construct a predictable version of the process $\{W_{\mathbf{T}(\nu_n)}^{(q)}(x-,y): x\in[y,T]\}$ for each $n\geq 1$.
				
				{We denote by $\Omega_0$ the subset of $\Omega$ on which $\nu_n[y,T]<\infty$ and \eqref{atoms_Poisson} holds for all $n\in\N$. 
                Note that $\p(\Omega_0)=1$.
					{In particular, on $\Omega_0$ the measure $N_n$ has a finite number of atoms of unit mass on $[y,T]$.}	
					By Remark \ref{exi_uni_T}, 
					it follows that for every $\omega\in\Omega_0$ and $n\geq1$, the function {$W_{{\mathbf{T}}(\nu_n)}^{(q)}(\cdot,y)[\omega]$} is well defined as the unique solution to \eqref{volterra_approx} on $[y,T]$. 
					For each $n\ge1$ and $x\in[y,T]$, we define
					\begin{align*}
						W_{{\mathbf{T}}(\nu_n)}^{(q)}(x,y)[\omega]=
						\begin{cases} 
							W_{{\mathbf{T}}(\nu_n)}^{(q)}(x,y)[\omega]  &\text { if $\omega\in\Omega_0$, }  \\ 
							0 &  \text { if $\omega\in\Omega\backslash\Omega_0$. } 
						\end{cases}
					\end{align*}
					Then, by the proof of Proposition \ref{exist_uni} 
					together with
					\eqref{jump_sf_2}
					, the process {$\left\{W_{\mathbf{T}(\nu_n)}^{(q)}(x-,y):x\in[y,T]\right\}$} is $\mathcal{G}_x^n$-adapted and left-continuous. Hence, it is predictable
					(see Lemma 22.1 in \cite{Ka}).} Moreover, for each $m\ge1$, we define the stopping time $\tau_m:=\inf\{z\geq0:W_{{\mathbf{T}}(\nu_n)}^{(q)}(z,y)\geq m\}$. 
				
				By the definition of the operator ${\mathbf{T}}$, as in \eqref{oper_T}, we have
				\begin{align*}
					(1-W^{(q)}(0){\mathbf{T}}(\nu^a)(\{x\}))^{-1}=(1-(1-e^{-\nu^a(\{x\})/{\mathbf{d}}})1_{\{x\in A^a\}})^{-1}=1_{\{x\not\in A^a\}}+e^{\nu^a(\{x\})/{\mathbf{d}}}1_{\{x\in A^a\}}.
				\end{align*}
				Hence, the fact that $\nu^a$ is a Radon measure, implies that
				\begin{align}\label{constant_nu_a}
					C_a(T,y):=\sup_{x\in[y,T]}(1-W^{(q)}(0){\mathbf{T}}(\nu^a)(\{x\}))^{-1}=\sup_{x\in[y,T]}\left(1_{\{x\not\in A^a\}}+e^{\nu^a(\{x\})/{\mathbf{d}}}1_{\{x\in A^a\}}\right)<\infty.
				\end{align}
				Using \eqref{pred_volterra}, we have 
				\begin{align*}
					(W_{{\mathbf{T}}(\nu_n)}^{(q)}(x\wedge\tau_m,y))^2&\leq 3C_a^2(T,y)\Bigg[(W^{(q)}(x-y))^2+\left(\int_{[y,x\wedge\tau_m)}W^{(q)}(x-z)W_{{\mathbf{T}}(\nu_n)}^{(q)}(z,y){\mathbf{T}}(\nu^a)(dz)\right)^2\notag\\
					&+\left(\int_{[y,x\wedge\tau_m]}W^{(q)}(x-z)W_{{\mathbf{T}}(\nu_n)}^{(q)}(z-,y){\mathbf{d}}(e^{1/n{\mathbf{d}}}-1)N_n(dz)\right)^2\Bigg],\quad \text{{$x\in[y,T]$, \ $\mathbb{P}$-a.s.}}
				\end{align*}
				Then, taking expectations in the previous inequality and applying
				\eqref{square_integrable_ineq}, we obtain, for all $x\in[y,T]$,
				\begin{align*}
					\E\left[(W_{{\mathbf{T}}(\nu_n)}^{(q)}(x\wedge\tau_m,y))^2\right]&\leq {3}C_a^2(T,y)\Bigg\{(W^{(q)}(x-y))^2+\E\left[\left(\int_{[y,x\wedge\tau_m)}W^{(q)}(x-z)W_{{\mathbf{T}}(\nu_n)}^{(q)}(z,y){\mathbf{T}}(\nu^a)(dz)\right)^2\right]\notag\\
					&+{2}\E\left[\int_{[y,x\wedge\tau_m]}\left(W^{(q)}(x-z)W_{{\mathbf{T}}(\nu_n)}^{(q)}(z-,y){\mathbf{d}}(e^{1/n{\mathbf{d}}}-1)\right)^2n\nu^d(dz)\right]\notag\\
					&+{2}\E\Bigg[\left(\int_{[y,x\wedge\tau_m]}W^{(q)}(x-z)W_{{\mathbf{T}}(\nu_n)}^{(q)}(z-,y)n{\mathbf{d}}(e^{1/n{\mathbf{d}}}-1)\nu^d(dz)\right)^2\Bigg]\Bigg\}.
				\end{align*}
				Applying \eqref{ine_T}, the monotonicity of the function $x\mapsto W^{(q)}(x)$,
				and Jensen's inequality, together with the bound $1-e^{-u}\leq u$ for $u>0$,
				we obtain, for all $x\in[y,T]$,
				\begin{align}\label{bound_lemma_conv_1}
					\E\left[(W_{{\mathbf{T}}(\nu_n)}^{(q)}(x\wedge\tau_m,y))^2\right]&\leq {3}C_a^2(T,y)\Bigg\{(W^{(q)}(x-y))^2+\nu^a[y, x)\E\left[\int_{[y,x\wedge\tau_m)}\left(W^{(q)}(x-z)W_{{\mathbf{T}}(\nu_n)}^{(q)}(z,y)\right)^2\nu^a(dz)\right]
					\notag\\
					&{+\frac{2e^{2/n{\mathbf{d}}}}{n}\E\left[\int_{[y,x\wedge\tau_m]}\left(W^{(q)}(x-z)W_{{\mathbf{T}}(\nu_n)}^{(q)}(z-,y)n{\mathbf{d}}{(1-e^{-1/n{\mathbf{d}}})}\right)^2\nu^d(dz)\right]}\notag\\
					&{+{2e^{2/n{\mathbf{d}}}}\nu^d[y,x]\E\Bigg[\int_{[y,x\wedge\tau_m]}\left(W^{(q)}(x-z)W_{{\mathbf{T}}(\nu_n)}^{(q)}(z-,y)n{\mathbf{d}}(1-e^{-1/n{\mathbf{d}}})\right)^2\nu^d(dz)\Bigg]\Bigg\}}\notag\\
					&\leq {3}C_a^2(T,y)\Bigg[(W^{(q)}(x-y))^2\notag\\
					&+{2}e^{2/n{\mathbf{d}}}\left(1+\nu([y,x))\right)\E\left[\int_{{[y,x\wedge\tau_m)}}(W^{(q)}(x-z))^2(W_{{\mathbf{T}}(\nu_n)}^{(q)}(z,y))^2
					\nu
					(dz)\right]\Bigg],\notag\\
					&\leq {3}C_a^2(T,y)\Bigg[(W^{(q)}(T-y))^2\notag\\
					&+{2}e^{2/n{\mathbf{d}}}\left(1+\nu([y,T))\right)(W^{(q)}(T-y))^2\E\left[\int_{{[y,x\wedge\tau_m)}}(W_{{\mathbf{T}}(\nu_n)}^{(q)}(z,y))^2
					\nu
					(dz)\right]\Bigg]\notag\\
					&\leq {3}C_a^2(T,y)\Bigg[(W^{(q)}(T-y))^2\notag\\
					&+{2}e^{2/n{\mathbf{d}}}\left(1+\nu([y,T))\right)(W^{(q)}(T-y))^2\int_{{[y,x)}}\E\left[(W_{{\mathbf{T}}(\nu_n)}^{(q)}(z\wedge\tau_m,y))^2\right]
					\nu
					(dz)\Bigg],
				\end{align}
				where we recall that $\nu=\nu^a+\nu^d$, and in the second inequality we used the fact that the measure $\nu^d$ is diffuse. 
{Observe that, by the definition of the stopping time $\tau_m$, $W_{{\mathbf{T}}(\nu_n)}^{(q)}((x\wedge\tau_m)-,y)\leq m$ for $x\in[y,T]$. Consequently, in view of \eqref{jump_sf_2}, we obtain that for every
	$z\in[y,x]$, 
	 \begin{align*}\E\left[(W_{{\mathbf{T}}(\nu_n)}^{(q)}(z\wedge\tau_m,y))^2\right]\leq m^2\left(\sup_{z\in[y,x]}e^{\nu^a(\{z\})/{\mathbf{d}}}+e^{1/n{\mathbf{d}}}\right)^2<\infty.
	 \end{align*}}

			Then, applying Gronwall's inequality (see \cite[Appendix, Theorem 5.1]{EK}) to
			\eqref{bound_lemma_conv_1}, letting $m\uparrow\infty$, and using Lemma
			\ref{lemma_mon} together with the monotone convergence theorem, we obtain,
			for all $x\in[y,T]$,
				{
				\begin{align}\label{bound_gron}
					\E\Bigg[(&W_{{\mathbf{T}}(\nu_n)}^{(q)}(x,y))^2\Bigg]\notag\\
					&\leq  {3}C_a^2(T,y)(W^{(q)}(T-y))^2\exp\left({6C_a^2(T,y)e^{2/{\mathbf{d}}}}(W^{(q)}(T-y))^2\nu([y,T])(1+\nu([y,T]))\right)<\infty.
				\end{align}}
				(iii) In this step, we prove the result. Using \eqref{pred_volterra}, \eqref{pred_volterra_2} and the fact that $\nu^d$ is diffuse, we have for $x\in[y,T]$
\begin{align}
 W_{{\mathbf{T}}(\nu_n)}^{(q)}(x,y)&-W_{{\mathbf{T}}(\nu)}^{(q)}(x,y)\\
 &= K_a(x)\Bigg[\int_{{[y,x)}}W^{(q)}(x-z)(W_{{\mathbf{T}}(\nu_n)}^{(q)}(z,y)-W_{{\mathbf{T}}(\nu)}^{(q)}(z,y)){\mathbf{T}}(\nu^a)(dz)\\
 &+{\int_{[y,x]}W^{(q)}(x-z)W_{{\mathbf{T}}(\nu_n)}^{(q)}(z-,y){\mathbf{d}}(e^{1/n{\mathbf{d}}}-1)\tilde{N}_n(dz)}\\
 &+\int_{[y,x]}W^{(q)}(x-z)n{\mathbf{d}}(e^{1/n{\mathbf{d}}}-1)(W_{{\mathbf{T}}(\nu_n)}^{(q)}(z,y)-W_{{\mathbf{T}}(\nu)}^{(q)}(z,y))\nu^d(dz)\\
 &+\int_{[y,x]}W^{(q)}(x-z)\left(n{\mathbf{d}}(e^{1/n{\mathbf{d}}}-1)-1\right)W_{{\mathbf{T}}(\nu)}^{(q)}(z,y)\nu^d(dz)\Bigg],\quad \text{{$\mathbb{P}$-a.s.}}
\end{align}
{Using {\eqref{ine_T}}, the fact that the mapping $x\mapsto W^{(q)}(x)$ {is not decreasing} and Jensen's inequality, we get, for $x\in[y,T]$, $\mathbb{P}$-a.s., 
				\begin{align*}
					\sup_{y\leq u\leq x} \Big|W_{{\mathbf{T}}(\nu_n)}^{(q)}&(u,y)-W_{{\mathbf{T}}(\nu)}^{(q)}(u,y)\Big|^2\notag\\
					&\leq 4C_a^2(T,y)\Bigg[\sup_{y\leq u\leq x}\bigg(\int_{[y,u]}W^{(q)}(u-z)W_{{\mathbf{T}}(\nu_n)}^{(q)}(z-,y){\mathbf{d}}(e^{1/n{\mathbf{d}}}-1)\tilde{N}_n(dz)\bigg)^2
					\notag\\
					&+{\nu^a}[y, T]\left(W^{(q)}(T-y)\right)^2\int_{{[y,x)}}
					\left(W_{{\mathbf{T}}(\nu)}^{(q)}(z,y)-W_{{\mathbf{T}}(\nu_n)}^{(q)}(z,y)\right)^2\nu^a(dz)\notag\\
					&+{\nu^d}[y, T]\left(n{\mathbf{d}}(e^{1/n{\mathbf{d}}}-1)\right)^2\left(W^{(q)}(T-y)\right)^2\int_{[y,x]} \left(W_{{\mathbf{T}}(\nu)}^{(q)}(z,y)-W_{{\mathbf{T}}(\nu_n)}^{(q)}(z,y)\right)^2\nu^d(dz)\notag\\
					&+{\nu^d}[y, T]\left(W^{(q)}(T-y)\right)^2\left(n{\mathbf{d}}(e^{1/n{\mathbf{d}}}-1)-1\right)^2\int_{[y,x]}\left(W_{{\mathbf{T}}(\nu)}^{(q)}(z,y)\right)^2\nu^d(dz)\Bigg].
				\end{align*}
Now, taking expectations in the previous inequality and applying Doob's maximal inequality along with \eqref{square_integrable}, we obtain that for $x\in[y,T]$
				\begin{align*}
					\E&\Bigg[\sup_{y\leq u\leq x}\bigg|W_{{\mathbf{T}}(\nu_n)}^{(q)}(u,y)-W_{{\mathbf{T}}(\nu)}^{(q)}(u,y)\bigg|^2\Bigg]\\
	&				\leq4C_a^2(T,y)\Bigg[
	4\E\bigg[\bigg(\int_{[y,x]}W^{(q)}(T-z)W_{{\mathbf{T}}(\nu_n)}^{(q)}(z-,y){\mathbf{d}}(e^{1/n{\mathbf{d}}}-1)\tilde{N}_n(dz)\bigg)^2\bigg]
					\notag\\
					&+{\nu^a}[y, T]\left(W^{(q)}(T-y)\right)^2\int_{{[y,x)}}
					\E\left[\left(W_{{\mathbf{T}}(\nu)}^{(q)}(z,y)-W_{{\mathbf{T}}(\nu_n)}^{(q)}(z,y)\right)^2\right]\nu^a(dz)\notag\\
					&+{\nu^d}[y, T]\left(W^{(q)}(T-y)\right)^2e^{2/n{\mathbf{d}}}\left(n{\mathbf{d}}(1-e^{-1/n{\mathbf{d}}})\right)^2\int_{[y,x]} \E\left[\left(W_{{\mathbf{T}}(\nu)}^{(q)}(z,y)-W_{{\mathbf{T}}(\nu_n)}^{(q)}(z,y)\right)^2\right]\nu^d(dz)\notag\\
					&+{\nu^d}[y, T]\left(W^{(q)}(T-y)\right)^2\left(n{\mathbf{d}}(e^{1/n{\mathbf{d}}}-1)-1\right)^2\int_{[y,x]}\left(W_{{\mathbf{T}}(\nu)}^{(q)}(z,y)\right)^2\nu^d(dz)\Bigg]\notag\\
					&\leq4C_a^2(T,y)\left(W^{(q)}(T-y)\right)^2\Bigg[
	\frac{4}{n}e^{2/n{\mathbf{d}}}\left(n{\mathbf{d}}(1-e^{-1/n{\mathbf{d}}})\right)^2\E\left[\int_{[y,x]}\Big(W_{{\mathbf{T}}(\nu_n)}^{(q)}(z-,y)\Big)^2\nu^d(dz)\right]
					\notag\\
					&+{\nu^a}[y, T]\int_{{[y,x)}}
					\E\left[\left(W_{{\mathbf{T}}(\nu)}^{(q)}(z,y)-W_{{\mathbf{T}}(\nu_n)}^{(q)}(z,y)\right)^2\right]\nu^a(dz)\notag\\
					&+{\nu^d}[y, T]e^{2/n{\mathbf{d}}}\left(n{\mathbf{d}}(1-e^{-1/n{\mathbf{d}}})\right)^2\int_{[y,x]} \E\left[\left(W_{{\mathbf{T}}(\nu)}^{(q)}(z,y)-W_{{\mathbf{T}}(\nu_n)}^{(q)}(z,y)\right)^2\right]\nu^d(dz)\notag\\
					&+{\nu^d}[y, T]\left(n{\mathbf{d}}(e^{1/n{\mathbf{d}}}-1)-1\right)^2\int_{[y,x]}\left(W_{{\mathbf{T}}(\nu)}^{(q)}(z,y)\right)^2\nu^d(dz)\Bigg].\notag
				\end{align*}
				Using the fact that $(1-e^{-x})\leq x$ for {$x>0$}, gives for $x\in[y,T]$ 
								\begin{align*}
					\E\Bigg[\sup_{y\leq u\leq x}\bigg|W_{{\mathbf{T}}(\nu_n)}^{(q)}&(u,y)-W_{{\mathbf{T}}(\nu)}^{(q)}(u,y)\bigg|^2\Bigg]\\
	&				\leq4C_a^2(T,y)\left(W^{(q)}(T-y)\right)^2\Bigg[
	\frac{4}{n}e^{2/{\mathbf{d}}}\E\left[\int_{[y,T]}\Big(W_{{\mathbf{T}}(\nu_n)}^{(q)}(z-,y)\Big)^2\nu^d(dz)\right]
					\notag\\
					&+\nu[y, T]e^{2/{\mathbf{d}}}\int_{{[y,x)}} \E\left[\left(W_{{\mathbf{T}}(\nu)}^{(q)}(z,y)-W_{{\mathbf{T}}(\nu_n)}^{(q)}(z,y)\right)^2\right]\nu(dz)\notag\\
					&+{\nu^d}[y, T]\left(n{\mathbf{d}}(e^{1/n{\mathbf{d}}}-1)-1\right)^2\int_{[y,T]}\left(W_{{\mathbf{T}}(\nu)}^{(q)}(z,y)\right)^2\nu^d(dz)\Bigg]\notag\\
					&\leq4C_a^2(T,y)\left(W^{(q)}(T-y)\right)^2\Bigg[
	\frac{4}{n}C_1(T,y)
					\notag\\
					&+\nu[y, T]e^{2/{\mathbf{d}}}\int_{{[y,x)}}\E\left[\left(W_{{\mathbf{T}}(\nu)}^{(q)}(z,y)-W_{{\mathbf{T}}(\nu_n)}^{(q)}(z,y)\right)^2\right]\nu(dz)\notag\\
					&+{\nu^d}[y, T]\left(n{\mathbf{d}}(e^{1/n{\mathbf{d}}}-1)-1\right)^2\int_{[y,T]}\left(W_{{\mathbf{T}}(\nu)}^{(q)}(z,y)\right)^2\nu^d(dz)\Bigg],\notag
				\end{align*}
				where $C_1(T,y)$ is a finite constant, which is obtained using \eqref{bound_gron}.}

				Finally, an application of Gronwall's inequality (see \cite[Appendix, Theorem 5.1]{EK}), gives
				\begin{align}\label{gron_2}
					\E\Bigg[\sup_{y\leq u\leq x} &\left|W_{{\mathbf{T}}(\nu_n)}^{(q)}(u,y)-W_{{\mathbf{T}}(\nu)}^{(q)}(u,y)\right|^2\Bigg]\notag\\&\leq \left[\frac{{K_1(T,y)}}{n}+\left(n{\mathbf{d}}(1-e^{-1/n{\mathbf{d}}})-1\right)^2{K_2(T,y)}\right]e^{{{K_3}(T,y)}},
				\end{align}
				{where $K_1(T,y)$, $K_2(T,y)$, and $K_3(T,y)$ are finite constants.}
				
				Therefore, by taking $n\to\infty$ in \eqref{gron_2} we get
				\begin{align*}
					\lim_{n\to\infty}\E\left[\sup_{y\leq u\leq x} \left|W_{{\mathbf{T}}(\nu_n)}^{(q)}(u,y)-W_{{\mathbf{T}}(\nu)}^{(q)}(u,y)\right|^2\right]=0.
				\end{align*}
			\end{proof}
			{For each $n\geq1$, and $T\geq y$, define
			\begin{align}\label{volterra_limit_Z_2}
				Z^{(q)}_{{\mathbf{T}}(\nu_n)}(x,y):&=1+\int_{[y,x]}W^{(q)}_{{\mathbf{T}}(\nu_n)}(x,u){\mathbf{T}}(\nu_n)(du)+q\int_y^xW^{(q)}_{{\mathbf{T}}(\nu_n)}(x,u)du,\qquad x\in[y,T].
			\end{align}
			Here, $W^{(q)}_{{\mathbf{T}}(\nu_n)}$ denotes the unique solution to
			\eqref{volterra_approx}, and the operator ${\mathbf{T}}$ and the measures
			$\nu_n$ are defined in \eqref{oper_T} and \eqref{measure_nu_n}, respectively.
			
			Similarly, let
			\begin{align}\label{volterra_limit_Z}
				Z^{(q)}_{{\mathbf{T}}(\nu)}(x,c):&=1+\int_{[y,x]}W^{(q)}_{{\mathbf{T}}(\nu)}(x,u)\mathbf{T}(\nu)(du)+q\int_y^xW^{(q)}_{{\mathbf{T}}(\nu)}(x,u)du,\qquad x\in[y,T],
			\end{align}
			where {$W^{(q)}_{{\mathbf{T}}(\nu)}$} is the unique solution to \eqref{volterra_limit}.
			
			The following lemma establishes the convergence of the sequence of scale
			functions $\bigl(Z^{(q)}_{{\mathbf{T}}(\nu_n)}\bigr)_{n\geq 1}$ toward
			$Z^{(q)}_{{\mathbf{T}}(\nu)}$ as $n\to\infty$. By Lemma \ref{sf_Z_volterra}, the proof is entirely analogous to that of Lemma \ref{conv_sf}, and is therefore omitted.
			\begin{lemma}\label{conv_sf_Z}
				For $T>0$. Consider the sequence $\left(Z_{{\mathbf{T}}(\nu_n)}^{(q)}\right)_{n\geq 1}$, where for each $n\geq1$, $Z_{{\mathbf{T}}(\nu_n)}^{(q)}$ is given by \eqref{volterra_limit_Z_2}. Then,
				\[				
				\E\left[\sup_{x\in[y,T]}
				\left|
				Z_{{\mathbf{T}}(\nu_n)}^{(q)}(x,y)
				-
				Z_{{\mathbf{T}}(\nu)}^{(q)}(x,y)
				\right|^2
				\right]
				\longrightarrow 0,
				\qquad \text{as $n\to\infty$},
				\]
				where $Z^{(q)}_{{\mathbf{T}}(\nu)}$ is given by \eqref{volterra_limit_Z}. 
				
			\end{lemma}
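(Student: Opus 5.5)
The plan is to reproduce the argument of Lemma~\ref{conv_sf}, with the role of the Volterra equation for $W$ taken over by the equation for $Z$ from Lemma~\ref{sf_Z_volterra}. By Lemma~\ref{sf_Z_volterra}, applied pathwise to the a.s.\ finite measure $\mathbf{T}(\nu_n)$ on $[y,T]$ and to $\mathbf{T}(\nu)$, the functions in \eqref{volterra_limit_Z_2} and \eqref{volterra_limit_Z} are the unique solutions of
\begin{align*}
Z^{(q)}_{\mathbf{T}(\nu_n)}(x,y)&=Z^{(q)}(x-y)+\int_{[y,x]}W^{(q)}(x-u)Z^{(q)}_{\mathbf{T}(\nu_n)}(u,y)\mathbf{T}(\nu_n)(du),\\
Z^{(q)}_{\mathbf{T}(\nu)}(x,y)&=Z^{(q)}(x-y)+\int_{[y,x]}W^{(q)}(x-u)Z^{(q)}_{\mathbf{T}(\nu)}(u,y)\mathbf{T}(\nu)(du),
\end{align*}
for $x\in[y,T]$. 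These are exactly \eqref{volterra_approx} and \eqref{volterra_limit}, except that the forcing term $W^{(q)}(x-y)$ is replaced by $Z^{(q)}(x-y)$. The new forcing term is deterministic, nonnegative, nondecreasing and bounded on $[y,T]$ by $Z^{(q)}(T-y)$. Nonnegativity and monotonicity of $Z^{(q)}_{\mathbf{T}(\nu_n)}(\cdot,y)$ in $x$ follow as in Lemma~\ref{lemma_mon}(i), using the positivity part of Proposition~\ref{exist_uni}.

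The steps are then as follows.

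\textbf{Step 1: jump relation and predictable form.} At atoms of $\nu^a$ and of $N_n$ (which are a.s.\ disjoint), the jump relation \eqref{jump_sf_2} holds verbatim for $Z^{(q)}_{\mathbf{T}(\nu_n)}$. Solving for $Z^{(q)}_{\mathbf{T}(\nu_n)}(x,y)$ gives the analogues of \eqref{pred_volterra} and \eqref{pred_volterra_2}. These carry the factor $K_a(x)$, use $Z^{(q)}(x-y)$ as the leading term, and use the left limits $Z^{(q)}_{\mathbf{T}(\nu_n)}(z-,y)$ inside the $N_n$-integral. The left-limit process is $\mathcal{G}^n_x$-adapted and left-continuous, hence predictable.

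\textbf{Step 2: uniform second-moment bound.} Localize with $\tau_m$ and use \eqref{square_integrable_ineq}, \eqref{ine_T}, Jensen's inequality and $1-e^{-u}\le u$. Gronwall's inequality \cite[Appendix, Theorem 5.1]{EK} then gives a bound of the form \eqref{bound_gron}, with the prefactor $(W^{(q)}(T-y))^2$ replaced by $(Z^{(q)}(T-y))^2$. Monotone convergence as $m\uparrow\infty$ removes the localization, using the monotonicity of $Z^{(q)}_{\mathbf{T}(\nu_n)}(\cdot,y)$.

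\textbf{Step 3: difference equation.} Subtract the two predictable forms. The forcing terms $Z^{(q)}(x-y)$ cancel exactly, so the difference equation coincides term by term with the one in step (iii) of Lemma~\ref{conv_sf}, with $W$ replaced by $Z$ inside the integrals. There are four terms:
\begin{itemize}
\item a $\nu^a$-integral of the difference;
\item a compensated $\tilde N_n$-martingale term, controlled by Doob's inequality, \eqref{square_integrable} and Step 2, and of size $O(1/n)$;
\item a $\nu^d$-integral of the difference;
\item a bias term carrying the factor $(n\mathbf{d}(e^{1/n\mathbf{d}}-1)-1)^2\to0$.
\end{itemize}
For the bias term we need $\int_{[y,T]}(Z^{(q)}_{\mathbf{T}(\nu)}(z,y))^2\nu^d(dz)<\infty$. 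This holds because $Z^{(q)}_{\mathbf{T}(\nu)}$ is locally bounded by Proposition~\ref{exist_uni} and $\nu$ is Radon. A second application of Gronwall's inequality yields a bound of the form \eqref{gron_2}, and letting $n\to\infty$ gives the claim.

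The main obstacle is Step 1. The alternative definition \eqref{volterra_limit_Z_2} in terms of $W_{\mathbf{T}(\nu_n)}$ must be rigorously matched with the Volterra form, pathwise on $\Omega_0$. The jump relation at atoms also has to be checked, which requires care with the closed interval $[y,x]$ and the $W^{(q)}(0)$ term in the bounded variation case.

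If one prefers to avoid redoing the argument, there is a direct alternative. Write
\[
Z^{(q)}_{\mathbf{T}(\nu_n)}-Z^{(q)}_{\mathbf{T}(\nu)}=\int_{[y,x]}\bigl(W^{(q)}_{\mathbf{T}(\nu_n)}(x,u)-W^{(q)}_{\mathbf{T}(\nu)}(x,u)\bigr)\mathbf{T}(\nu_n)(du)+\cdots
\]
and bound this using Lemma~\ref{conv_sf} uniformly in the second argument. However, that lemma fixes $y$, so the Volterra route above is the one I would follow.
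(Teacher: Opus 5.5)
Your proposal is correct and follows the paper's route. The paper also uses Lemma~\ref{sf_Z_volterra} to recast $Z^{(q)}_{\mathbf{T}(\nu_n)}$ and $Z^{(q)}_{\mathbf{T}(\nu)}$ as solutions of the Volterra equations with forcing term $Z^{(q)}(x-y)$, and then states that the proof is entirely analogous to that of Lemma~\ref{conv_sf}; what you spell out (jump relation and predictable form, localized second-moment bound via Gronwall, and the difference equation in which the deterministic forcing terms cancel) is exactly that omitted argument.
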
}
			\section{Proof of the main results}\label{sec:proof_main_results}
			In this section, we prove the results stated in Section \ref{sec:main_results}. The key idea is to apply Theorem~\ref{LiZhouProof_thm}, which provides the desired fluctuation identities in the case where the PcNAF is given by a finite mixture of local times, that is, when its Revuz measure is a finite atomic measure.
					
					By \eqref{representation_equation}, any PcNAF $A$ can be represented as a mixture of local times with respect to its Revuz measure $\nu_A$. We extend the fluctuation identities to the general setting by approximating $\nu_A$ with a sequence of random finite atomic measures constructed from Poisson random measures independent of the Lévy process $X$. Since each element of this sequence is almost surely a finite sum of Dirac masses, Theorem~\ref{LiZhouProof_thm} applies conditionally on the corresponding Poisson random measure.
					
					We then show that the resulting sequence of fluctuation identities converges to those associated with the original PcNAF $A$. At the same time, the corresponding generalized scale functions converge to the unique solution of the limiting integral equation driven by the Revuz measure $\nu_A$.
					
					More precisely, throughout this section, given a PcNAF $A$ with Revuz measure $\nu_A$, we consider the sequence of random measures $(\nu_n)_{n\geq1}$ defined by
					\[
					\nu_n(dz)=\nu_A^{a}(dz)+\frac{1}{n}N_n(dz), \qquad n\geq1,
					\]
					where $\nu_A^{a}$ and $\nu_A^{d}$ denote the atomic and diffuse parts of $\nu_A$, respectively, and $N_n$ is a Poisson random measure with intensity $n\nu_A^{d}$, independent of $X$.
					
					We first establish an auxiliary convergence result for the joint Laplace transforms of the first passage times and the associated additive functionals under the approximating sequence $(\nu_n)_{n\geq1}$. This result is the key step in extending the fluctuation identities to the general setting.
				\begin{lemma} \label{new_convergence_lemma}
					For $q\geq0$, and $x\in(c,b)$,
					\begin{align}\label{new_conv_measure_0}
						\lim_{n\to\infty}\E_x\bigg[e^{-q\tau_b^+-\int_{\R}L^y_{\tau_b^+}\nu_{n}(dy)};\tau_b^+<\tau_c^-\bigg]=\E_x\bigg[e^{-q\tau_b^+-\int_{\R}L^y_{\tau_b^+}{\nu_A}(dy)};\tau_b^+<\tau_c^-\bigg].
					\end{align}
					\begin{align}\label{new_conv_measure_1}
						\lim_{n\to\infty}\E_x\bigg[e^{-q\tau_c^--\int_{\R}L^y_{\tau_c^-}\nu_{n}(dy)};\tau_c^-<\tau_b^+\bigg]=\E_x\bigg[e^{-q\tau_c^--\int_{\R}L^y_{\tau_c^-}{\nu_A}(dy)};\tau_c^-<\tau_b^+\bigg].
					\end{align}
					Additionally, for any bounded measurable function $f:\R\to\R$, we have
					\begin{align}\label{new_conv_measure_2}
						\lim_{n\to\infty}\E_x\bigg[e^{-\int_{\R}L^y_{t}\nu_{n}(dy)}f(X_t)1_{\{
							t<\tau_c^-\wedge\tau_b^+\}}\bigg]=\E_x\bigg[e^{-\int_{\R}L^y_{t}{\nu_A}(dy)}f(X_t)1_{\{
							t<\tau_c^-\wedge\tau_b^+\}}\bigg].
					\end{align}
				\end{lemma}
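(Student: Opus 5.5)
Since $N_n$ is independent of $X$, the expectation $\E_x$ on the left-hand sides is taken jointly over $X$ and $N_n$. The key observation is that $\nu_n$ differs from $\nu_A$ only in its diffuse part, because $\nu_n-\nu_A=\frac1n\tilde N_n$. Hence
\[
\int_\R L^y_t\,\nu_n(dy)-\int_\R L^y_t\,\nu_A(dy)=\frac1n\int_\R L^y_t\,\tilde N_n(dy),
\]
with $\tilde N_n=N_n-n\nu_A^d$. I will show that, for every stopping time $\tau$ of $X$ (in particular $\tau_b^+$, $\tau_c^-$, or a deterministic $t$), this difference tends to $0$ in probability on the event $\{\tau\le \tau_b^+\wedge\tau_c^-\}$. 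Once this is known, the three limits follow by bounded convergence. The integrands $e^{-q\tau-\int L^y_\tau\nu(dy)}$ are bounded by $1$ (and by $\|f\|_\infty$ in \eqref{new_conv_measure_2}), and $x\mapsto e^{-x}$ is uniformly continuous on $[0,\infty)$. So it suffices that $\int L^y_\tau\nu_n(dy)\to\int L^y_\tau\nu_A(dy)$ in probability on the relevant event.

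To prove this convergence, I condition on $X$. On $\{\tau\le\tau_b^+\wedge\tau_c^-\}$ the path stays in $[c,b]$ before $\tau$ (up to a possible jump below $c$ at $\tau_c^-$, which does not add local time at levels above $c$). Thus $y\mapsto L^y_\tau$ is supported in $[c,b]$, and almost surely it is bounded there, by joint regularity of local times for the unbounded-variation case; in the bounded variation case $L^y_\tau$ is a counting function which is finite on compact sets. Write $\Lambda:=\sup_{y\in[c,b]}L^y_\tau$. By the isometry \eqref{square_integrable}, applied conditionally on $X$ with the deterministic integrand $F(y)=L^y_\tau 1_{[c,b]}(y)$, we get
\[
\E\Big[\Big(\tfrac1n\int L^y_\tau\tilde N_n(dy)\Big)^2\,\Big|\,X\Big]=\frac1{n^2}\int (L^y_\tau)^2\,n\nu_A^d(dy)\le \frac{\Lambda^2\nu_A[c,b]}{n}\to0 .
\]
Because the conditional second moment tends to $0$ almost surely, conditional Chebyshev followed by dominated convergence yields convergence in probability under $\p_x$. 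The Fubini step is justified because the additive functional $\int L^y\nu_A(dy)$ is finite almost surely by Proposition \ref{Prfof(2.10)} and condition (3). Note that $\nu_A$ is Radon by Proposition \ref{PropD2}, so $\nu_A[c,b]<\infty$.

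The main obstacle is the almost-sure boundedness of $y\mapsto L^y_\tau$ on $[c,b]$. If this is unavailable, I will avoid sup-norm bounds altogether and argue in expectation. First, $\E_x[\int_{[c,b]}(L^y_{\tau})^2\nu^d_A(dy)]$ is finite because $\E_x[(L^y_{\tau_b^+\wedge\tau_c^-})^2]$ is bounded uniformly in $y\in[c,b]$. This holds since $L^y$ at the exit time is exponentially distributed, the number of returns to $y$ before exit being geometric, with a parameter bounded away from $0$ that can be expressed by scale functions. Second, Fubini over the independent $N_n$ then gives $L^2$ convergence directly. For a deterministic time $t$ in \eqref{new_conv_measure_2}, I will use $L^y_t\le L^y_{\tau_b^+\wedge\tau_c^-}$ on $\{t<\tau_c^-\wedge\tau_b^+\}$.
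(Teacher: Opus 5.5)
Your argument is correct once the fallback in your last paragraph becomes the main line. The first route is not justified as written. For spectrally negative processes of unbounded variation, $y\mapsto L^y_t$ need not be jointly continuous, or even locally bounded. Barlow's entropy criterion can fail, for instance with $\sigma=0$ and $\Pi(dx)=|x|^{-2}dx$ on $(-1,0)$. So $\sup_{y\in[c,b]}L^y_\tau<\infty$ cannot be taken from ``joint regularity''.

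You never need that bound. All that is required is $\int_{[c,b]}(L^y_\tau)^2\,\nu_A^d(dy)<\infty$ a.s., and your expectation bound gives it. Under $\p_y$, the local time at $y$ accumulated before $\tau_b^+\wedge\tau_c^-$ has the following law:
\begin{itemize}
\item in the unbounded variation case it is exponential, by the strong Markov property at the inverse local time, not by counting returns;
\item in the bounded variation case it is $\mathbf{d}^{-1}$ times a geometric variable.
\end{itemize}
In both cases its mean is the resolvent density read off from \eqref{resolvent},
\[
u(y,y)=\frac{W^{(0)}(y-c)W^{(0)}(b-y)}{W^{(0)}(b-c)}-W^{(0)}(0)\le W^{(0)}(b-c).
\]
Hence $\sup_{y\in[c,b]}\E_x[(L^y_{\tau_b^+\wedge\tau_c^-})^2]<\infty$, and
\[
\E_x\Big[\Big(\tfrac1n\int_\R L^y_\tau\,\tilde N_n(dy)\Big)^2\Big]\le \frac{C\,\nu_A[c,b]}{n}\longrightarrow 0 .
\]
A small point: in the bounded variation case $L^{\cdot}_{\tau_c^-}$ also charges the landing point $X_{\tau_c^-}<c$, so ``supported in $[c,b]$'' is not literally true. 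This is harmless, since conditionally on $X$ that point is $\nu_A^d$-null and a.s. not an atom of $N_n$.

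With this fix your route is genuinely different from the paper's. The paper never looks at the fluctuation $\frac1n\int L^y_\tau\tilde N_n(dy)$. Conditioning on $X$, it computes the $N_n$-expectation exactly through the Laplace functional,
\[
\E\big[\exp(-\tfrac1n\textstyle\int g\,dN_n)\big]=\exp\big(-\textstyle\int n(1-e^{-g/n})\,d\nu_A^d\big),\qquad g(y)=L^y_\tau .
\]
It then uses $n(1-e^{-g/n})\to g$ with domination by $g$, where $\int g\,d\nu_A^d\le A_\tau<\infty$ a.s., followed by one more dominated convergence under $\E_x$. That needs only almost sure finiteness of the additive functional at $\tau$: no moments of local times, no regularity in the level variable, and no use of the two-sided killing.

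Your approach pays for a second-moment bound on local times at the exit time of $[c,b]$, which is where the killing is really used. In exchange it proves more: the exponents $\int L^y_\tau\,\nu_n(dy)$ converge to $\int L^y_\tau\,\nu_A(dy)$ in $L^2(\p_x)$ at rate $n^{-1/2}$. The conclusion would therefore hold for any bounded continuous functional of the exponent, not just $e^{-x}$.
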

				\begin{proof}
					(i) We only prove \eqref{new_conv_measure_0}, since the proof of
					\eqref{new_conv_measure_1} is analogous. To this end, note first that
					\begin{align}\label{new_conv_measure}
						\Bigg|\E_x&\bigg[e^{-q\tau_b^+-\int_{\R}L^{y}_{\tau_b^+}\nu^a_A(dy)-\int_{\R}L^{y}_{\tau_b^+}\frac{1}{n}N_{n}(dy)};\tau_b^+<\tau_c^-\bigg]-\E_x\bigg[e^{-q{\tau_b^+}-\int_{\R}L^{y}_{\tau_b^+}\nu^a_A(dy)-\int_{\R}L^{y}_{\tau_b^+}\nu_A^d(dy)};\tau_b^+<\tau_c^-\bigg]\Bigg|\notag\\
						&=\Bigg|\E_x\bigg[e^{-q{\tau_b^+}-\int_{\R}L^{y}_{\tau_b^+}\nu^a_A(dy)}
						\bigg{(}\E\bigg{[}e^{-\int_{\R}g(y)\frac{1}{n}N_{n}(dy)}\bigg{]}\bigg{|}_{g(y)=L^{y}_{\tau_b^+}}-e^{-\int_{\R}L^{y}_{\tau_b^+}\nu_A^d(dy)} \bigg{)};\tau_b^+<\tau_c^-\bigg]\Bigg|\notag\\
						&=\Bigg|\E_x\bigg[e^{-q{\tau_b^+}-\int_{\R}L^{y}_{\tau_b^+}\nu^a_A(dy)}
						\bigg{(}\exp\bigg{\{}-\int_{\R}\bigg{(}1-e^{-\frac{1}{n}L^{y}_{\tau_b^+}}\bigg{)}n \nu_A^d(dy) \bigg{\}}-e^{-\int_{\R}L^{y}_{\tau_b^+}\nu_A^d(dy)} \bigg{)};\tau_b^+<\tau_c^-\bigg]\Bigg|.
					\end{align}	
					In the last equality, we used the Laplace functional of a Poisson random
					measure (see \cite[Chapter VI, Theorem 2.9]{Cinlar}), together with the
					fact that $N_n$ is independent of $X$.
					{On the other hand, note that 
						\begin{align*}
							\int_{\R}\bigg{(}1-e^{-\frac{1}{n}L^{y}_{\tau_b^+}}\bigg{)}n \nu_A^d(dy)\leq \int_{\R}L^{y}_{\tau_b^+}\nu_A^d(dy)<\infty,\qquad \text{$\mathbb{P}$-a.s.}
						\end{align*}
						Thus, by the dominated convergence theorem,
						\begin{align*}
							\lim_{n\to\infty}\exp\bigg{(}-\int_{\R}\bigg{(}1-e^{-\frac{1}{n}L^{y}_{\tau_b^+}}\bigg{)}n \nu_A^d(dy) \bigg{)}
							=\exp\bigg{(}-\int_{\R}L^{y}_{\tau_b^+}\nu_A^d(dy) \bigg{)},\qquad \text{$\mathbb{P}$-a.s.}
					\end{align*}}
					Therefore, letting $n\to\infty$ in \eqref{new_conv_measure} and applying dominated convergence once more yields \eqref{new_conv_measure_0}.
					
					(ii) We now prove \eqref{new_conv_measure_2}. Observe that
					\begin{align}\label{new_conv_measure_4}
						\Bigg|\E_x&\bigg[e^{-\int_{\R}L^{y}_{t}\nu^a_A(dy)-\int_{\R}L^{y}_{t}\frac{1}{n}N_{n}(dy)}f(X_t)1_{\{t<\tau_c^-\land\tau_b^+\}}\bigg]-\E_x\bigg[e^{-\int_{\R}L^{y}_{t}\nu^a_A(dy)-\int_{\R}L^{y}_{t}{\nu_A^d}(dy)}f(X_t)1_{\{t<\tau_c^-\land\tau_b^+\}}\bigg]\Bigg|\notag\\
						&=\Bigg|\E_x\bigg[e^{-\int_{\R}L^{y}_{t}\nu^a_A(dy)}
						\bigg{(}\E\bigg{[}e^{-\int_{\R}g(y)\frac{1}{n}N_{n}(dy)}\bigg{]}\bigg{|}_{g(y)=L^{y}_{t}}-e^{-\int_{\R}L^{y}_{t}{\nu_A^d}(dy)} \bigg{)}f(X_t)1_{\{t<\tau_c^-\land\tau_b^+\}}\bigg]\Bigg|\notag\\
						&=\Bigg|\E_x\bigg[e^{-\int_{\R}L^{y}_{t}\nu^a_A(dy)}
						\bigg{(}\exp\bigg{\{}-\int_{\R}\bigg{(}1-e^{-\frac{1}{n}L^{y}_t}\bigg{)}n \nu_A^d(dy) \bigg{\}}-e^{-\int_{\R}L^{y}_{t}{\nu_A^d}(dy)} \bigg{)}f(X_t)1_{\{t<\tau_c^-\land\tau_b^+\}}\bigg]\Bigg|
						\notag\\
						&\leq\|f\|_{\infty}\E_x\bigg[e^{-\int_{\R}L^{y}_{t}\nu^a_A(dy)}
						\bigg{|}\exp\bigg{\{}-\int_{\R}\bigg{(}1-e^{-\frac{1}{n}L^{y}_t}\bigg{)}n \nu_A^d(dy) \bigg{\}}-e^{-\int_{\R}L^{y}_{t}{\nu_A^d}(dy)} \bigg{|}1_{\{t<\tau_c^-\land\tau_b^+\}}\bigg].
					\end{align}
					{In the second equality, we used Theorem~2.9 in Chapter~VI of \cite{Cinlar}, together with the fact that
					the Poisson random measure $N_n$ is independent of $X$.
					
					Proceeding as in step~(i), we may let $n\to\infty$ in \eqref{new_conv_measure_4}.
					An application of the dominated convergence theorem then yields \eqref{new_conv_measure_2}.}
			\end{proof}	
			\subsection{Proof of Theorem \ref{main_sf}}
			(i) {We begin by extending identity \eqref{local_time_exit} of Theorem~\ref{LiZhouProof_thm} to the case of a (possibly infinite) mixture of local times at different levels. 
			To this end,} let $\mu$ be an atomic Radon measure of the form $\mu(dz)=\sum_{i=1}^{\infty}q_i\delta_{b_i}(dz)$ with $q_i>0$ and $b_i\in\R$ for every $i\in\mathbb{N}$. By Remark \ref{exi_uni_T}, there exists a unique solution to the following integral equation:
			\begin{align}\label{volterra_fin}
				W_{{\mathbf{T}}(\mu)}^{(q)}(x,c)=W^{(q)}(x-c)+\int_{[c,x]}W^{(q)}(x-z)W_{{\mathbf{T}}(\mu)}^{(q)}(z,c){\mathbf{T}}(\mu)(dz),\qquad x\in[c,b],
			\end{align}
			Moreover, if we define the measure $\mu_n(dz)=\sum_{i=1}^{n}q_i\delta_{b_i}(dz)$ for $n\in\mathbb{N}$, then Remark \ref{exi_uni_T}, implies that there exists a unique solution to:
			\begin{align}\label{volterra_fin_approx}
				W_{{\mathbf{T}}(\mu_n)}^{(q)}(x,c)=W^{(q)}(x-c)+\int_{[c,x]}W^{(q)}(x-z)W_{{\mathbf{T}}(\mu_n)}^{(q)}(z,c){\mathbf{T}}(\mu_n)(dz),\qquad x\in[c,b],
			\end{align}
			Since $W^{(q)}(u)\geq0$ for all $u\in\R$, and since ${\mathbf{T}}(\mu)$ and ${\mathbf{T}}(\mu_n)$ are non-negative measures, it follows from Proposition \ref{exist_uni} that $W_{{\mathbf{T}}(\mu)}^{(q)}(\cdot,y)$ and $W_{{\mathbf{T}}(\mu_n)}^{(q)}(x,y)$ are non-negative on $[c,b]$. Therefore, using Lemma \ref{equiv_lemma}, we obtain that for $x\in[c,b]$:
			\begin{align}\label{approx_atom}
			W_{{\mathbf{T}}(\mu)}^{(q)}(x,c)-W_{{\mathbf{T}}(\mu_n)}^{(q)}(x,c)=\int_{[c,x]}W_{{\mathbf{T}}(\mu)}^{(q)}(x,z)W_{{\mathbf{T}}(\mu_n)}^{(q)}(z,c)({\mathbf{T}}(\mu)-{\mathbf{T}}(\mu_n))(dz)\geq0.
			\end{align}
			{By Lemma \ref{lemma_mon}, the mapping $x\mapsto W_{{\mathbf{T}}(\mu)}^{(q)}(x,c)$ is non-decreasing}
			, and thus $W_{{\mathbf{T}}(\mu)}^{(q)}(\cdot,c)$ is bounded on $[c,b]$. Thus, using \eqref{ine_T} and \eqref{approx_atom}, we conclude that for $x\in[c,b]$:
			\begin{align*}
			W_{{\mathbf{T}}(\mu)}^{(q)}(x,c)-W_{{\mathbf{T}}(\mu_n)}^{(q)}(x,c)&\leq (W_{{\mathbf{T}}(\mu)}^{(q)}(b,c))^2\sum_{i=n+1}^{\infty}{\mathbf{d}} (1-e^{-q_i/{\mathbf{d}}})1_{\{b_i\in[c,b]\}}\notag\\
			&\leq (W_{{\mathbf{T}}(\mu)}^{(q)}(b,c))^2\sum_{i=n+1}^{\infty}q_i1_{\{b_i\in[c,b]\}}.
			\end{align*}
			Since $\mu[c,b]<\infty$, we have 
			\[
			\sum_{i=n+1}^{\infty}q_i1_{\{b_i\in[c,b]\}}\downarrow0, \qquad \text{as $n\to\infty$}.
			\]
			Thus, 
			\begin{align}\label{lim_atomic}
			{W_{{\mathbf{T}}(\mu)}^{(q)}(x,c)-W_{{\mathbf{T}}(\mu_n)}^{(q)}(x,c)\to0},\qquad \text{as $n\to\infty$, uniformly in $[c,b]$}.
			\end{align}
			Note that 
			\[
			\int_{\R}L_{\tau_b^+}^y\mu_n(dy)={\sum_{i=1}^{n}L_{\tau_b^+}^{b_i}q_i}.
			\]
			{Then, }
			\begin{align}\label{lt_conv}
			\int_{\R}L_{\tau_b^+}^y\mu_n(dy)\to\int_{\R}L_{\tau_b^+}^y\mu(dy),\qquad\text{{as $n\to\infty$}}.
			\end{align}
			On the other hand, by Theorem~\ref{LiZhouProof_thm}, Lemma~\ref{sf_lt_lemma}, and Remark~\ref{remark_revuz_measure}, we obtain, for all $x\in[c,b]$,
			\begin{align*}
				\E_x\left[e^{-q\tau_b^+-\int_{\R}L^y_{\tau_b^+}\mu_n(dy)};\tau_b^+<\tau_c^-\right]=\frac{W_{{\mathbf{T}}(\mu_n)}^{(q)}(x,c)}{W_{{\mathbf{T}}(\mu_n)}^{(q)}(b,c)}.
			\end{align*}
			Therefore, using dominated convergence, \eqref{lim_atomic}, and \eqref{lt_conv}, we obtain that for $x\in[c,b]$:
			\begin{align}\label{sf_point_m}
				\E_x\left[e^{-q\tau_b^+-\int_{\R}L^y_{\tau_b^+}\mu(dy)};\tau_b^+<\tau_c^-\right]=\frac{W_{{\mathbf{T}}(\mu)}^{(q)}(x,c)}{W_{{\mathbf{T}}(\mu)}^{(q)}(b,c)}.
			\end{align}
				(ii) {{By Proposition~\ref{PropD2}, the Revuz measure of $A$, $\nu_A$, is a Radon measure.} Consider the decomposition of $\nu_A$ in its atomic and diffuse parts, denoted by $\nu_A^a$ and $\nu_A^d$, respectively. {In addition, recall that $N_n(dz)$ is a Poisson random measure  with intensity measure $n\nu_A^d(dz)$, independent of the process $X$, and that}}
				\[
				\nu_n(dz)={\nu^a_A}(dz)+\frac{1}{n}N_n(dz).
				\]
				{Since the measure $n\nu_A^d$ is diffuse, it follows from \cite[Theorem VI.2.17]{Cinlar}
					that the Poisson random measure $N_n$ is almost surely a random counting measure.
					Therefore, conditioning on $N_n$ and applying \eqref{sf_point_m} (which is justified
					because $\nu_n$ is almost surely
					a counting measure), we obtain}
				\begin{align}\label{lim_2}
					\E_x\bigg[&e^{-q\tau_b^+-\int_{\R}L^y_{\tau_b^+}\nu_n(dy)};\tau_b^+<\tau_c^-\bigg]\notag\\&=\E_x\left[\bE\left[e^{-q\tau_b^+-\int_{\R}L^y_{\tau_b^+}\nu^a_A(dy)-\int_{\R}L^y_{\tau_b^+}\frac{1}{n}N_n(dy)};\tau_b^+<\tau_c^-\Big|N_n\right]\right]=\E_x\left[\frac{W_{{\mathbf{T}}(\nu_n)}^{(q)}(x,c)}{W_{{\mathbf{T}}(\nu_n)}^{(q)}(b,c)}\right],
				\end{align}
				where $W_{{\mathbf{T}}(\nu_n)}^{(q)}$ is, almost surely, the unique solution to \eqref{volterra_approx}.

				By Lemma \ref{conv_sf}, we may extract a subsequence, which we denote by
				$(W_{{\mathbf{T}}(\nu_{n_k})}^{(q)}(\cdot,c))_{k\geq 1}$, that converges
				uniformly on compact subsets of $[c,T]$ almost surely to
				$W_{{\mathbf{T}}(\nu_A)}^{(q)}(\cdot,c)$ as $k\to\infty$.
				Here, $W_{{\mathbf{T}}(\nu_A)}^{(q)}(\cdot,c)$ is the unique solution to
				\eqref{volterra_sf_w}.
				Moreover, by Lemma \ref{lemma_mon}, we have $\frac{W_{{\mathbf{T}}(\nu_{n_k})}^{(q)}(x, c)}{W_{{\mathbf{T}}(\nu_{n_k})}^{(q)}(b, c)} \leq 1$ for $x \leq b$. Therefore, {using} \eqref{lim_2} and applying
				Lemma \ref{new_convergence_lemma} together with the dominated convergence theorem
				yields
				\begin{align*}
					\E_x\left[e^{-q\tau_b^+-\int_{\R}L^y_{\tau_b^+}\nu_A(dy)};\tau_b^+<\tau_c^-\right]&=
					{\lim_{k\to\infty}\E_x\left[e^{-q\tau_b^+-\int_{\R}L^y_{\tau_b^+}\nu_{n_k}(dy)};\tau_b^+<\tau_c^-\right]}\notag\\&=\lim_{k\to\infty}\E_x\left[\frac{W_{{\mathbf{T}}(\nu_{n_k})}^{(q)}(x,c)}{W_{{\mathbf{T}}(\nu_{n_k})}^{(q)}(b,c)}\right]\notag\\
					&=\frac{W_{{\mathbf{T}}(\nu_A)}^{(q)}(x,c)}{W_{{\mathbf{T}}(\nu_A)}^{(q)}(b,c)},
				\end{align*}
				{which proves the claim.} 
				\qedsymbol 
				\subsection{Proof of Theorem \ref{two_sided_down_thm}}
				(i) {We begin by extending identity \eqref{tsd_u_0} in Theorem~\ref{LiZhouProof_thm} to the case of a (possibly infinite) mixture of local times at different levels. 
				Let $\mu$ and $(\mu_n)_{n\geq1}$ be the measures introduced in part (i) of the proof of Theorem~\ref{main_sf}.}
				
				For $x\in[c,b]$, let
				\begin{align*}
				Z^{(q)}_{{\mathbf{T}}(\mu_n)}(x,c):&=1+\int_{[c,x]}W^{(q)}_{{\mathbf{T}}(\mu_n)}(x,y){\mathbf{T}}(\mu_n)(dy)+q\int_c^xW^{(q)}_{{\mathbf{T}}(\mu_n)}(x,y)dy,\qquad x\in[c,b].
				\end{align*}
				Using \eqref{lim_atomic} together with {dominated} convergence, we obtain, for $x\in [c,b]$,
				\begin{align}\label{lim_atom_Z}
				\lim_{n\to\infty}Z^{(q)}_{{\mathbf{T}}(\mu_n)}(x,c)&=1+\lim_{n\to\infty}\sum_{i=1}^{\infty}\mathbf{d}(1-e^{-q_i/\mathbf{d}})W^{(q)}_{{\mathbf{T}}(\mu_n)}(x,b_i)1_{\{b_i\in[c,b]\}}1_{\{i\leq n\}}+\lim_{n\to\infty}q\int_c^xW^{(q)}_{{\mathbf{T}}(\mu_n)}(x,y)dy\notag\\
				&=1+\sum_{i=1}^{\infty}\mathbf{d}(1-e^{-q_i/\mathbf{d}})W^{(q)}_{{\mathbf{T}}(\mu)}(x,b_i)1_{\{b_i\in[c,b]\}}+q\int_c^xW^{(q)}_{{\mathbf{T}}(\mu)}(x,y)dy\notag\\
				&=1+\int_{[c,x]}W^{(q)}_{{\mathbf{T}}(\mu)}(x,y){\mathbf{T}}(\mu)(dy)+q\int_c^xW^{(q)}_{{\mathbf{T}}(\mu)}(x,y)dy\notag\\
				&=Z^{(q)}_{{\mathbf{T}}(\mu)}(x,c).
				\end{align}
				
				By Lemma \ref{sf_Z_volterra}, for each $n\geq 1$, the function $Z^{(q)}_{{\mathbf{T}}(\mu_n)}(\cdot,c)$ is the unique solution to the following integral equation:
					\begin{align}\label{volterra_fin_Z}
						Z_{{\mathbf{T}}(\mu_n)}^{(q)}{(x,c)}={Z^{(q)}(x-c)}+\int_{[c,x]}W^{(q)}(x-z)Z_{{\mathbf{T}}(\mu_n)}^{(q)}{(z,c)}{\mathbf{T}}(\mu_n)({dz}),\qquad x\in[c,b].
					\end{align}
				Therefore, by Theorem~\ref{LiZhouProof_thm}, Lemma~\ref{sf_lt_lemma}, and Remark~\ref{remark_revuz_measure}, we obtain, for $x\in[c,b]$,
				\begin{align}\label{fluc_iden_dis_lt_Z}
					\E_x\left[e^{-q\tau_c^--\int_{\R}L^y_{\tau_c^-}\mu_n(dy)};\tau_c^-<\tau_b^+\right]=Z^{(q)}_{{\mathbf{T}}(\mu_n)}(x,c)-\frac{Z^{(q)}_{{\mathbf{T}}(\mu_n)}(b,c)}{W^{(q)}_{{\mathbf{T}}(\mu_n)}(b,c)}W^{(q)}_{{\mathbf{T}}(\mu_n)}(x,c).
				\end{align}
				By a similar argument to that leading to identity \eqref{lt_conv}, 
				we have 
				\begin{align}\label{lt_conv_Z}
					\int_{\R}L_{\tau_c^-}^y\mu_n(dy)\to\int_{\R}L_{\tau_c^-}^y\mu(dy)\qquad\text{{as $n\to\infty$}}.
				\end{align}
				Hence, combining \eqref{lim_atomic}, \eqref{lim_atom_Z}, and \eqref{lt_conv_Z} in \eqref{fluc_iden_dis_lt_Z} and applying the dominated convergence theorem, we obtain for $x\in[c,b]$ that
					\begin{align}\label{fluc_iden_dis_lt_Z_2}
						\E_x\left[e^{-q\tau_c^--\int_{\R}L^y_{\tau_c^-}\mu(dy)};\tau_c^-<\tau_b^+\right]=Z^{(q)}_{{\mathbf{T}}(\mu)}(x,c)-\frac{Z^{(q)}_{{\mathbf{T}}(\mu)}(b,c)}{W^{(q)}_{{\mathbf{T}}(\mu)}(b,c)}W^{(q)}_{{\mathbf{T}}(\mu)}(x,c).
					\end{align}

					(ii) 
					%
					Recall that $\nu_A$ denotes the Revuz measure of the PcNAF $A$, which, by Proposition \ref{PropD2}, is a Radon measure. 
						We denote by $(\nu_n)_{n\ge1}$ the sequence of random measures introduced in part (ii) of the proof of Theorem~\ref{main_sf}.
						
						Using \eqref{fluc_iden_dis_lt_Z_2} and following the same line of reasoning that leads to \eqref{lim_2}, we obtain
					\begin{align}\label{lim_3}
						\E_x\left[e^{-q\tau_c^--\int_{\R}L^y_{\tau_c^-}\nu_n(dy)};\tau_c^-<\tau_b^+\right]&=\E_x\left[\E\left[e^{-q\tau_c^--\int_{\R}L^y_{\tau_c^-}\nu_A^a(dy)-\int_{\R}L^y_{\tau_c^-}\frac{1}{n}N_n(dy)};\tau_c^-<\tau_b^+\Big|{N_n}\right]\right]\notag\\
						&=\E_x\left[Z^{(q)}_{{\mathbf{T}}(\nu_{n})}(x,c)-\frac{W_{{\mathbf{T}}(\nu_n)}^{(q)}(x,c)}{W_{{\mathbf{T}}(\nu_n)}^{(q)}(b,c)}Z^{(q)}_{{\mathbf{T}}(\nu_{n})}(b,c)\right],
					\end{align}
					{where $Z^{(q)}_{{\mathbf{T}}(\nu_{n})}$ is given by \eqref{volterra_limit_Z_2}}.
				
By Lemma~\ref{conv_sf_Z}, we have
					\begin{align}\label{lim_Z_L_2}
						\lim_{k\to\infty}\E_x\left[Z^{(q)}_{{\mathbf{T}}(\nu_{n_k})}(x,c)\right]=Z^{(q)}_{{\mathbf{T}}(\nu_A)}(x,c),\qquad x\geq c.
					\end{align} 
					{Moreover, by Lemmas~\ref{conv_sf} and \ref{conv_sf_Z}, we may extract subsequences
						$\bigl(Z^{(q)}_{{\mathbf{T}}(\nu_{n_k})}(x,c)\bigr)_{k\geq 1}$ and
						$\bigl(W^{(q)}_{{\mathbf{T}}(\nu_{n_k})}(x,c)\bigr)_{k\geq 1}$
						which converge uniformly on compact subsets of $[c,T]$ almost surely, as $k\to\infty$, to
						$Z^{(q)}_{{\mathbf{T}}(\nu_A)}(x,c)$ and
						$W^{(q)}_{{\mathbf{T}}(\nu_A)}(x,c)$, respectively.
					In addition, since $\frac{W_{{\mathbf{T}}(\nu_{n_k})}^{(q)}(x,c)}{W_{{\mathbf{T}}(\nu_{n_k})}^{(q)}(b,c)}\leq 1$ we may apply the generalized dominated convergence theorem together with
					\eqref{lim_Z_L_2} to obtain}
					\begin{align}\label{lim_Z_L_3}
						\lim_{k\to\infty}\E_x\left[\frac{W_{{\mathbf{T}}(\nu_{n_k})}^{(q)}(x,c)}{W_{{\mathbf{T}}(\nu_{n_k})}^{(q)}(b,c)}Z^{(q)}_{{\mathbf{T}}(\nu_{n_k})}(b,c)\right]=\frac{W_{{\mathbf{T}}(\nu_A)}^{(q)}(x,c)}{W_{{\mathbf{T}}(\nu_A)}^{(q)}(b,c)}Z^{(q)}_{{\mathbf{T}}(\nu_A)}(b,c),\qquad c\leq x\leq b.
					\end{align}
						{Therefore, by \eqref{lim_3}, Lemma~\ref{new_convergence_lemma}, and \eqref{lim_Z_L_2}--\eqref{lim_Z_L_3}, we obtain}  
					\begin{align*}
						\E_x\left[e^{-q\tau_c^--\int_{\R}L^y_{\tau_c^-}\nu_{A}(dy)};\tau_c^-<\tau_b^+\right]
						&=\lim_{k\to\infty}\E_x\bigg[e^{-q\tau_c^--\int_{\R}L^y_{\tau_c^-}\nu_{n_k}(dy)};\tau_c^-<\tau_b^+\bigg]\notag\\&=\lim_{k\to\infty}\E_x\left[Z^{(q)}_{{\mathbf{T}}(\nu_{n_k})}(x,c)-\frac{W_{{\mathbf{T}}(\nu_{n_k})}^{(q)}(x,c)}{W_{{\mathbf{T}}(\nu_{n_k})}^{(q)}(b,c)}Z^{(q)}_{{\mathbf{T}}(\nu_{n_k})}(b,c)\right]\notag\\
						&=Z^{(q)}_{{\mathbf{T}}(\nu_A)}(x,c)-\frac{W_{{\mathbf{T}}(\nu_A)}^{(q)}(x,c)}{W_{{\mathbf{T}}(\nu_A)}^{(q)}(b,c)}Z^{(q)}_{{\mathbf{T}}(\nu_A)}(b,c).
					\end{align*}
					The fact that $Z^{(q)}_{{\mathbf{T}}(\nu_A)}$ is the unique solution to \eqref{volterra_sf_Z_new} follows from Lemma \ref{sf_Z_volterra}.
					\qedsymbol 
					\subsection{Proof of Theorem \ref{resol_ts}}
					(i) {As in the proof of Theorem~\ref{main_sf}, we begin by extending identity \eqref{local_time_resolvent} of
					Theorem~\ref{LiZhouProof_thm} to the case of a (possibly infinite) mixture of
					local times at different levels. {Let $\mu$ and $(\mu_n)_{n\geq1}$ be the measures introduced in part (i) of the proof of Theorem~\ref{main_sf}.} 
					
					Then, by Theorem~\ref{LiZhouProof_thm}, Lemma~\ref{sf_lt_lemma}, and
					Remark~\ref{remark_revuz_measure}, we obtain that for $x\in(c,b)$ and any bounded,
					 measurable function $f:\R\to\R$, 
					\begin{align*}
						\E_x\left[\int_0^{\tau_b^+\wedge\tau_c^-}e^{-qt-\int_{\R}L^y_t\mu_n(dy)}f(X_t)dt\right]=\int_{c}^bf(y)\left[\frac{W_{{\mathbf{T}}(\mu_n)}^{(q)}(x,c)}{W_{{\mathbf{T}}(\mu_n)}^{(q)}(b,c)}W_{{\mathbf{T}}(\mu_n)}^{(q)}(b,y)-W_{{\mathbf{T}}(\mu_n)}^{(q)}(x,y)\right]dy,
					\end{align*}
					where $W_{{\mathbf{T}}(\mu_n)}^{(q)}$ is given as the unique solution to \eqref{volterra_fin_approx}.
					
					Next, using the dominated convergence theorem together with
					\eqref{lim_atomic} and \eqref{lt_conv}, we may pass to the limit as $n\to\infty$
					to obtain, for $x\in[c,b]$,{
					\begin{align}\label{fluc_iden_resol_infinite}
						\E_x\left[\int_0^{\tau_b^+\wedge\tau_c^-}e^{-qt-\int_{\R}L^y_t\mu(dy)}f(X_t)dt\right]=\int_{c}^bf(y)\left[\frac{W_{{\mathbf{T}}(\mu)}^{(q)}(x,c)}{W_{{\mathbf{T}}(\mu)}^{(q)}(b,c)}W_{{\mathbf{T}}(\mu)}^{(q)}(b,y)-W_{{\mathbf{T}}(\mu)}^{(q)}(x,y)\right]dy,
					\end{align}}
					
							(ii) {By Proposition \ref{PropD2}, the measure $\nu_A$ is Radon.} 
							%
							{We denote by $(\nu_n)_{n\ge1}$ the sequence of random measures introduced in part (ii) of the proof of Theorem~\ref{main_sf}.}
							
							{Using \eqref{fluc_iden_resol_infinite} and following the same line of reasoning that leads to \eqref{lim_2}, we obtain}
							\begin{align}\label{lim_0_resol}
								\E_x\Bigg[\int_0^{\tau_b^+\wedge\tau_c^-}&e^{-qt-\int_{\R}L^y_t\nu_n(dy)}f(X_t)dt\Bigg]\notag\\
&=\E_x\left[\E\left[\int_0^{\tau_b^+\wedge\tau_c^-}e^{-qt-\int_{\R}L^y_t\nu_n(dy)}f(X_t) dt \bigg{|}N_n\right]\right]\notag\\
								&={\E_x\left[\int_{c}^bf(y)\left(\frac{W_{{\mathbf{T}}(\nu_n)}^{(q)}(x,c)}{W_{{\mathbf{T}}(\nu_n)}^{(q)}(b,c)}W_{{\mathbf{T}}(\nu_n)}^{(q)}(b,y)-W_{{\mathbf{T}}(\nu_n)}^{(q)}(x,y)\right)dy\right]}\notag\\
								&=\int_{c}^bf(y)\E_x\left[\frac{W_{{\mathbf{T}}(\nu_n)}^{(q)}(x,c)}{W_{{\mathbf{T}}(\nu_n)}^{(q)}(b,c)}W_{{\mathbf{T}}(\nu_n)}^{(q)}(b,y)-W_{{\mathbf{T}}(\nu_n)}^{(q)}(x,y)\right]dy,
							\end{align}}
							where for each $n\geq1$, $W_{{\mathbf{T}}(\nu_n)}^{(q)}$ is almost surely the unique solution to \eqref{volterra_approx}.
						{Proceeding as in the proof of Theorem \ref{main_sf} and using Lemma \ref{conv_sf}, we can extract a subsequence  $(W_{{\mathbf{T}}(\nu_{n_k})}^{(q)}(\cdot, c))_{k\geq 1}$ such that
							\begin{align*}
								\lim_{k\to\infty}\E_x\left[W^{(q)}_{{\mathbf{T}}(\nu_{n_k})}(x,c)\right]=W^{(q)}_{{\mathbf{T}}(\nu_A)}(x,c),\qquad x\geq c,
							\end{align*}
							and
							\begin{align*}
								\lim_{k\to\infty}\E_x\left[\frac{W_{{\mathbf{T}}(\nu_{n_k})}^{(q)}(x,c)}{W_{{\mathbf{T}}(\nu_{n_k})}^{(q)}(b,c)}W^{(q)}_{{\mathbf{T}}(\nu_{n_k})}(b,y)\right]=\frac{W_{{\mathbf{T}}(\nu_A)}^{(q)}(x,c)}{W_{{\mathbf{T}}(\nu_A)}^{(q)}(b,c)}W^{(q)}_{{\mathbf{T}}(\nu_A)}(b,y),\qquad c\leq x\leq b.
							\end{align*}}
							Hence, for any $x,y\in(c,b)$,
							\begin{align}\label{lim_2_resol}
								\lim_{k\to\infty}\E\Bigg[\frac{W_{{\mathbf{T}}(\nu_{n_k})}^{(q)}(x,c)}{W_{{\mathbf{T}}(\nu_{n_k})}^{(q)}(b,c)}W_{{\mathbf{T}}(\nu_{n_k})}^{(q)}(b,y)&-W_{{\mathbf{T}}(\nu_{n_k})}^{(q)}(x,y)\Bigg]\notag\\&=\frac{W_{{\mathbf{T}}(\nu_A)}^{(q)}(x,c)}{W_{{\mathbf{T}}(\nu_A)}^{(q)}(b,c)}W_{{\mathbf{T}}(\nu_A)}^{(q)}(b,y)-W_{{\mathbf{T}}(\nu_A)}^{(q)}(x,y).
							\end{align}
							{Therefore, by \eqref{lim_0_resol}, Lemma~\ref{new_convergence_lemma}, and \eqref{lim_2_resol}, we obtain for any bounded measurable function $f$}
							{\begin{align*}
								\E_x\Bigg[\int_0^{\tau_b^+\wedge\tau_c^-}&e^{-qt-\int_{\R}L^y_t\nu_A(dy)}f(X_t)dt\Bigg]\notag\\
								&=\int_0^{\infty}e^{-qt}\E_x\left[e^{-\int_{\R}L^y_t\nu_A(dy)}f(X_t)1_{\{t<\tau_b^+\wedge\tau_c^-\}}\right]dt\notag\\
								&=\int_0^{\infty}e^{-qt}\lim_{k\to\infty}\E_x\left[e^{-\int_{\R}L^y_t\nu_{n_k}(dy)}f(X_t)1_{\{t<\tau_b^+\wedge\tau_c^-\}}\right]dt\notag\\
								&=\lim_{k\to\infty}\E_x\left[\int_0^{\tau_b^+\wedge\tau_c^-}e^{-qt-\int_{\R}L^y_t\nu_{n_k}(dy)}f(X_t)dt\right]\notag\\
								&=\lim_{k\to\infty}\int_{c}^bf(y)\E\left[\frac{W_{{\mathbf{T}}(\nu_{n_k})}^{(q)}(x,c)}{W_{{\mathbf{T}}(\nu_{n_k})}^{(q)}(b,c)}W_{{\mathbf{T}}(\nu_{n_k})}^{(q)}(b,y)-W_{{\mathbf{T}}(\nu_{n_k})}^{(q)}(x,y)\right]dy\notag\\
								&=\int_{c}^bf(y)\left(\frac{W_{{\mathbf{T}}(\nu_{A})}^{(q)}(x,c)}{W_{{\mathbf{T}}(\nu_{A})}^{(q)}(b,c)}W_{{\mathbf{T}}(\nu_{A})}^{(q)}(b,y)-W_{{\mathbf{T}}(\nu_{A})}^{(q)}(x,y)\right)dy.
							\end{align*}}
							Here $W_{{\mathbf{T}}(\nu_A)}^{(q)}$ is the unique solution to
							\eqref{volterra_sf_w}. The interchange of limit and integration is justified by
							the generalized dominated convergence theorem, since Lemma~\ref{lemma_mon}
							implies that
							\begin{align*}
								\E\left[\left|\frac{W_{{\mathbf{T}}(\nu_{n_k})}^{(q)}(x,c)}{W_{{\mathbf{T}}(\nu_{n_k})}^{(q)}(b,c)}W_{{\mathbf{T}}(\nu_{n_k})}^{(q)}(b,y)-W_{{\mathbf{T}}(\nu_{n_k})}^{(q)}(x,y)\right|\right]\leq \E\left[W_{{\mathbf{T}}(\nu_{n_k})}^{(q)}(b,c)\right].
							\end{align*}
				\qedsymbol 
					\subsection{Proof of Lemma \ref{lemma_one_sided_limit}}
					 
					 (i) Fix $T>d$. Note that, {by Proposition \ref{PropD2},} $\nu_A(dz)$ and $\eta dz$ are both Radon measures. Then, using \eqref{ine_T}, it follows that ${\mathbf{T}}(\nu_A)(dz)$ is also a Radon measure. Moreover, proceeding like in Remark \ref{exi_uni_T}, and since {$\nu_A$} is Radon, we obtain that
					 \begin{align}\label{assum_prop_ex_uni_new}
					 	\sup_{x\in[d,T]}(1-{W^{(q+\eta)}(0)}({\mathbf{T}}(\nu_A)-\eta dz)\{x\})^{-1}=\sup_{x\in[d,T]}(1-{W^{(q+\eta)}(0)}{\mathbf{T}}(\nu_A)\{x\})^{-1}<\infty.
					 \end{align}
					 Hence, by Proposition \ref{exist_uni}, there exists a unique solution to \eqref{fun_u} on $[d,T]$.
					
					(ii) 
					{Let $c\le d$.}
					By combining \eqref{scale_fun_prop} and \eqref{volterra_sf_w} with
					Lemma~\ref{equiv_lemma}, we obtain, for $x\in[c,T]$,
					\begin{align*}
					W_{{\mathbf{T}}(\nu_A)}^{(q)}(x,c)&=W^{(\eta+q)}(x-c)+\int_{[c,x]}W^{(\eta+q)}(x-z)W_{{\mathbf{T}}(\nu_A)}^{(q)}(z,c)\left({\mathbf{T}}(\nu_A)(dz)-\eta dz\right)\notag\\
					&=W^{(\eta+q)}(x-c)+\int_{[c,x)}W^{(\eta+q)}(x-z)W_{{\mathbf{T}}(\nu_A)}^{(q)}(z,c)\left({\mathbf{T}}(\nu_A)(dz)-\eta dz\right)\notag\\&+W^{(\eta+q)}(0)W_{{\mathbf{T}}(\nu_A)}^{(q)}(x,c){\mathbf{T}}(\nu_A)\{x\}.
					\end{align*}
					Therefore, solving for $W_{{\mathbf{T}}(\nu_A)}^{(q)}(x,c)$ shows that
					$W_{{\mathbf{T}}(\nu_A)}^{(q)}(\cdot,c)$ satisfies, for $x \in [c,T]$, the integral equation
					\begin{align*}
						W_{{\mathbf{T}}(\nu_A)}^{(q)}(x,c)&=K(x)\left[W^{(\eta+q)}(x-c)+\int_{[c,x)}W^{(\eta+q)}(x-z)W_{{\mathbf{T}}(\nu_A)}^{(q)}(z,c)\left({\mathbf{T}}(\nu_A)(dz)-\eta dz\right)\right],
					\end{align*}
					where {$K(x)=\left(1-W^{(\eta+q)}(0){\mathbf{T}}(\nu_A)\{x\}\right)^{-1}$}. 
					
					{Note that, by the definition of $\nu_A$ in \eqref{revu_dec_one_sided} {and by \eqref{assum_prop_ex_uni_new}}, we obtain that
					\begin{align*}
						C(T):=\sup_{x\leq T} K(x)=\sup_{x\in[d,T]} K(x)<\infty.
					\end{align*}}
					Proceeding in the same way, we find that $u^{(q)}_{{\mathbf{T}}(\nu_A)}$
					satisfies the following integral equation for $x\in[c,T]$:
					\begin{align}
						u^{(q)}_{{\mathbf{T}}(\nu_A)}(x)=K(x)\left[e^{\Phi(\eta+q)x}+\int_{[d,x)}W^{(\eta+q)}(x-z)u^{(q)}_{{\mathbf{T}}(\nu_A)}(z)\left({\mathbf{T}}(\nu_A)(dz)-\eta dz\right)\right],\qquad x\in\R.
					\end{align}
					Thus, for $c<d$ and $x\in[c,T]$, we obtain
					\begin{align*}
						\frac{W_{{\mathbf{T}}(\nu_A)}^{(q)}(x,c)}{{W^{(\eta+q)}(-c)}}-u_{{\mathbf{T}}(\nu_A)}^{(q)}(x)&=K(x)\Bigg[\frac{W^{(\eta+q)}(x-c)}{{W^{(\eta+q)}(-c)}}-e^{\Phi(\eta+q)x}\notag\\&+\int_{[d,x)}W^{(\eta+q)}(x-z)\left(\frac{W_{{\mathbf{T}}(\nu_A)}^{(q)}(z,c)}{{W^{(\eta+q)}(-c)}}-u_{{\mathbf{T}}(\nu_A)}^{(q)}(z)\right)\left({\mathbf{T}}(\nu_A)(dz)-\eta dz\right)\Bigg],
					\end{align*}
					where we used that $\int_{[c,d)}W^{(\eta+q)}(x-z)\frac{W_{{\mathbf{T}}(\nu_A)}^{(q)}(z,c)}{{W^{(\eta+q)}(-c)}}\left({\mathbf{T}}(\nu_A)(dz)-\eta dz\right)=0$, which follows from the definition of $\nu_A$ in \eqref{revu_dec_one_sided}.
					Consequently,
					\begin{align*}
						\left|\frac{W_{{\mathbf{T}}(\nu_A)}^{(q)}(x,c)}{{W^{(\eta+q)}(-c)}}-u_{{\mathbf{T}}(\nu_A)}^{(q)}(x)\right|&\leq {C(T)}\Bigg[\left|\frac{W^{(\eta+q)}(x-c)}{W^{(\eta+q)}(-c)}-e^{\Phi(\eta+q)x}\right|\notag\\&+W^{(\eta+q)}(T-d)\int_{[d,x)}\left|\frac{W_{{\mathbf{T}}(\nu_A)}^{(q)}(z,c)}{{W^{(\eta+q)}(-c)}}-u_{{\mathbf{T}}(\nu_A)}^{(q)}(z)\right|\left({\mathbf{T}}(\nu_A)(dz)+\eta dz\right)\Bigg].
					\end{align*}
					Then, using Gronwall's inequality (see \cite[Appendix, Theorem 5.1]{EK}) and \eqref{W_q_limit}, gives for $x\leq T$
					\begin{align*}
						\lim_{c\to-\infty}&\left|\frac{W_{{\mathbf{T}}(\nu_A)}^{(q)}(x,c)}{{W^{(\eta+q)}(-c)}}-u_{{\mathbf{T}}(\nu_A)}^{(q)}(x)\right|\notag\\&\leq\lim_{c\to-\infty}\left|\frac{W^{(\eta+q)}(x-c)}{W^{(\eta+q)}(-c)}-e^{\Phi(\eta+q)x}\right|{C(T)}\exp\left\{{C(T)}W^{(\eta+q)}(T-d){\left[{\mathbf{T}}(\nu_A)[d, T]+\eta(T-d)\right]}\right\}\notag\\&=0,
					\end{align*}
					as stated. \qedsymbol 
					
					We finish this section by providing the proof of Proposition \ref{one_sided_downwards}.
					\subsection{Proof of Proposition \ref{one_sided_downwards}}
					\begin{proof}
						(i) By identity \eqref{two_sided_down_iden} we have that 
						\begin{align*}
							Z^{(q)}_{{\mathbf{T}}(\nu_A)}(x,c)-\frac{Z^{(q)}_{{\mathbf{T}}(\nu_A)}(b,c)}{W_{{\mathbf{T}}(\nu_A)}^{(q)}(b,c)}W_{{\mathbf{T}}(\nu_A)}^{(q)}(x,c)\geq0,\qquad x\leq b,
						\end{align*}	
						which implies that
						\begin{align*}
							\frac{Z^{(q)}_{{\mathbf{T}}(\nu_A)}(x,c)}{W_{{\mathbf{T}}(\nu_A)}^{(q)}(x,c)}\geq \frac{Z^{(q)}_{{\mathbf{T}}(\nu_A)}(b,c)}{W_{{\mathbf{T}}(\nu_A)}^{(q)}(b,c)},\qquad x\leq b.
						\end{align*}
						Thus, the mapping $b\mapsto \frac{Z^{(q)}_{{\mathbf{T}}(\nu_A)}(b,c)}{W_{{\mathbf{T}}(\nu_A)}^{(q)}(b,c)}$ is non-increasing and therefore convergent as $b\to\infty$, and thus $C^{(q)}_{{\mathbf{T}}(\nu_A)}$ is well-defined. Then taking $b\to\infty$ in \eqref{two_sided_down_iden} and using monotone convergence yields \eqref{one_sided_down_iden}.
						
						(ii) Note that by {Theorem} \ref{resol_ts} we have that 
						\[
						\frac{W_{{\mathbf{T}}(\nu_A)}^{(q)}(x,c)}{W_{{\mathbf{T}}(\nu_A)}^{(q)}(b,c)}W_{{\mathbf{T}}(\nu_A)}^{(q)}(b,y)-W_{{\mathbf{T}}(\nu_A)}^{(q)}(x,y)\geq0,\qquad x,y\in[c,b],
						\]
						given that it is the resolvent density of the processes killed by the additive functional $A$. This inequality implies that
						\[
						\frac{W_{{\mathbf{T}}(\nu_A)}^{(q)}(b,y)}{W_{{\mathbf{T}}(\nu_A)}^{(q)}(b,c)}\geq \frac{W_{{\mathbf{T}}(\nu_A)}^{(q)}(x,y)}{W_{{\mathbf{T}}(\nu_A)}^{(q)}(x,c)},\qquad x,y\in[c,b].
						\]
						Therefore, the mapping {$b\mapsto \frac{W_{{\mathbf{T}}(\nu_A)}^{(q)}(b,y)}{W_{{\mathbf{T}}(\nu_A)}^{(q)}(b,c)}$} is non-decreasing. {Since, by Lemma \ref{lemma_mon}, this ratio is bounded by $1$,} the limit
						\[
						{c^{(q)}_{{\mathbf{T}}(\nu_A)}}(y,c):=\lim_{b\to\infty}\frac{W_{{\mathbf{T}}(\nu_A)}^{(q)}(b,y)}{W_{{\mathbf{T}}(\nu_A)}^{(q)}(b,c)},\qquad y\geq c,
						\]
						exists and is well defined. { Moreover, letting $b\to\infty$ in
							\eqref{resolvent_iden} and applying the monotone convergence theorem
							yields \eqref{resol_down}.}
						
	By the definition of $c^{(q)}_{{\mathbf{T}}(\nu_A)}$ and by the monotonicity of the mapping $y\mapsto W_{{\mathbf{T}}(\nu_A)}^{(q)}(b,y)$ from Lemma \ref{lemma_mon}, the function $y\mapsto {c^{(q)}_{{\mathbf{T}}(\nu_A)}}(y,c)$ is non-increasing. Thus, $c^{(q)}_{{\mathbf{T}}(\nu_A)}(y,c)\leq c^{(q)}_{{\mathbf{T}}(\nu_A)}(c,c)=1$ for every $y\geq c$. 
					\end{proof}
\section*{Acknowledgements}

J. L. P\'erez gratefully acknowledges the support of the Fulbright Program during his sabbatical stay at Arizona State University. He also wishes to thank the faculty and staff of the School of Mathematical and Statistical Sciences at ASU for their hospitality. 
				\appendix
\section{Some properties of the additive functionals and their Revuz measures for spectrally negative L\'evy processes}\label{prop_PCNAF}
{In this section, we provide some properties of PcNAFs associated with spectrally negative L\'evy processes and of their corresponding Revuz measures, as defined in Section \ref{PcNAFforLevy}.

By identity (8.10) in \cite{Getoor}, the Revuz measure of a PcNAF of $X$ coincides with that of $X^{\Ei}$, where $X^{\Ei}$ denotes the L\'evy process $X$ killed at an independent exponential time $\Ei$ with rate $1$. Consequently, in several proofs throughout this section, we will work with PcNAFs and their Revuz measures associated with $X^{\Ei}$. For a PcNAF $A$, we define the Revuz measure $\nu_A$ as}
\begin{align}
\nu_A(B)=\lim_{t\downarrow0}\frac{1}{t}
\int_\R {\E_x}\left(\int_{(0, t]}1_B(X^{\Ei}_s)d A_s\right) dx
,\quad B\in\mathcal{B}(\R). 
\label{Revuz_measure_Ei}
\end{align} 
{We denote by $\Lambda$ the Lebesgue measure, which we use here as the reference measure. 
{Note that $\Lambda$ is a potential} (for the definition of potentials, see \cite[p.6]{Getoor}), since}
\begin{align}
\int_\bR \E_x\left[\int_0^\infty 1_B(X^{\Ei}_t) dt\right] dx
=&\int_\bR \E_x\left[\int_0^\infty e^{-t}1_B(X_t) dt\right] dx\\
=&\int_0^\infty e^{-t}\E\left[\int_\bR 1_B(X_t+x)dx\right] dt \\
=&\int_0^\infty e^{-t}\Lambda( B) dt \\
=&\Lambda(B),\quad B\in\mathcal{B}(\R). \label{dis}
\end{align}
	Of course, one may choose a reference measure other than $\Lambda$ for $X$ or $X^{\Ei}$. However, throughout this paper we adopt $\Lambda$ as the reference measure for both $X$ and $X^{\Ei}$, since $\Lambda$ is a potential with respect to $X^{\Ei}$. Combining this fact with \cite[Remark 1, p.~227]{BB} {and Proposition 8.11 in \cite{Getoor}}, the Revuz measure defined in \cite[p.~231]{BB} can be regarded as consistent with the definition used here.
	Consequently, in the proofs of several results below, we may freely use both the definition of the Revuz measure given above and the results obtained in \cite[Section 6.5]{BB}.

{We now recall some facts about PcNAFs, following \cite[Section 6.5]{BB}. Our goal is to verify that the associated Revuz measure is a Radon measure. In \cite[Section 6.5]{BB}, this is carried out in an abstract setting via the relationship between Revuz measures and $\Lambda$-strongly smooth measures (for the definition of $\Lambda$-strongly smooth, see \cite[p.217]{BB}).

In that definition, the terms ``$\xi$-polar'' and ``$\rho$-negligible'' are used. 
Here, ``the set $B\in\B(\R)$ is $\Lambda$-polar'' means that 
\begin{align}
\int_\R \p_x (T_B <\infty) dx =0, 
\end{align}
where $T_B:=\inf\{t\geq 0: X^{\Ei}_t \in B\}$. 
By \cite[Proposition 1.7.4 and Theorem 1.8.5]{BB}, this definition coincides with that of the $\Lambda$-polar set defined by \cite[p.54]{BB}.
	{Note that for any $x \in \R$, since $X^{\Ei}$ has no positive jumps and by \cite[Theorem 3.12]{K}, we have}
\begin{align}
\p_y(T_{\{x\}}<\infty)=\p_y(T_{[x, \infty)} <\infty)= e^{-\Phi(1)(x-y)}>0,\qquad y\in(-\infty,x),
\end{align}
and thus for any non-empty set $B\in\B(\R)$, 
\begin{align}
\int_\R \p_x (T_B <\infty) dx
\geq \int_{-\infty}^{b} \p_x (T_{\{b\}} <\infty) dx
=\int_0^\infty e^{-\Phi(1)x}dx>0,
\end{align}
where $b$ is a point in $B$. {Hence, the only $\Lambda$-polar set for $X^{\Ei}$ is the empty set.}

{By this fact, and since the measure $\rho$ in \cite[p.217]{BB} corresponds to $\Lambda$ due to \eqref{dis}, the terms 
``$\xi$-polar'' and ``$\rho$-negligible'' in \cite[p.217]{BB} correspond respectively to ``empty'' and ``$\Lambda$-negligible'', in our setting. Thus, we first need to establish the equivalence between being $\Lambda$-strongly smooth and being a Radon measure.}
\begin{proposition}\label{PropD1}
For an $\sigma$-finite measure $\xi$, the following are equivalent.
\begin{enumerate}
\item $\xi$ is $\Lambda$-strongly smooth. 
\item $\xi$ is a Radon measure.
\end{enumerate}
\end{proposition}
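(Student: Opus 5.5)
The plan is to use the explicit $1$-resolvent density of $X^{\Ei}$ in both directions, together with the fact established just above that the only $\Lambda$-polar set is $\emptyset$. By \eqref{resolvent} with $q=1$, letting $c\to-\infty$ and $b\to\infty$ and using \eqref{W_q_limit}, the potential density is
\[
r(x,y)=\Phi'(1)e^{-\Phi(1)(y-x)}-W^{(1)}(y-x),\qquad x,y\in\R,
\]
where $\Phi'(1)=\psi'(\Phi(1))^{-1}$. For a measure $\mu$, the potential is then $U\mu(x)=\int r(x,y)\,\mu(dy)$.

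First I will record three properties of $r$.
\begin{itemize}
\item[(a)] $r$ is bounded. For $y>x$ it is at most $\Phi'(1)$. For $y\le x$ it equals $e^{\Phi(1)z}\bigl(\Phi'(1)-e^{-\Phi(1)z}W^{(1)}(z)\bigr)$ with $z=x-y\ge0$, which stays bounded by \eqref{W_q_limit} and the usual exponential rate of convergence there; I will check this at this step.
\item[(b)] $r$ is strictly positive. The map $z\mapsto e^{-\Phi(1)z}W^{(1)}(z)$ increases strictly to $\Phi'(1)$.
\item[(c)] $r$ is continuous away from the diagonal. On a compact rectangle it is therefore bounded below by a positive constant, up to the diagonal jump $W^{(1)}(0)$. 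In the bounded variation case the jump is harmless because $\psi'(\Phi(1))<\mathbf{d}$, so $\Phi'(1)>W^{(1)}(0)$.
\end{itemize}

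(2)$\Rightarrow$(1). Let $\xi$ be Radon and take the increasing Borel (indeed open) sets $B_n=(-n,n)$, whose union is $\R$. Then $U(1_{B_n}\xi)\le \|r\|_\infty\,\xi[-n,n]<\infty$ everywhere, so each $1_{B_n}\xi$ has a bounded potential. Because $\Lambda$-polar sets are empty, $\xi$ charges no polar set, and the exceptional set where the union fails to cover is empty. This produces exactly the nest required in the definition of $\Lambda$-strong smoothness on \cite[p.217]{BB}, with ``$\xi$-polar'' read as ``empty'' and ``$\rho$-negligible'' as ``$\Lambda$-negligible''.

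(1)$\Rightarrow$(2). Suppose $\xi$ is $\Lambda$-strongly smooth. Then there is an increasing sequence $G_n$ of finely open (or nearly Borel) sets covering $\R$, since the exceptional set is polar and hence empty, such that $U(1_{G_n}\xi)\le M_n$. For a compact $K=[a,b]$, property (b)--(c) gives $r(a-1,y)\ge c_K>0$ for $y\in K$. Evaluating the potential at $x=a-1$ then yields $\xi(G_n\cap K)\le M_n/c_K$, so each $1_{G_n}\xi$ is Radon.

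The main obstacle is upgrading this to $\xi(K)<\infty$, since $M_n$ may blow up and $K$ need not lie inside a single $G_n$. I would first try to prove that every $G_n$ contains a Euclidean open set around each of its points, so that compactness applies. The upward drift or creeping of a spectrally negative process makes $(x,x+\delta)$ immediately visited from $x$, so a finely open set containing $x$ contains $[x,x+\delta)$ for some $\delta>0$. A downward analogue, via regularity of $(-\infty,x)$ in the unbounded variation case, or a Baire-category argument on the closed sets $K\setminus G_n^{\circ}$, should then give that finitely many $G_n$ cover $K$, and hence $\xi(K)<\infty$.

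If this fine-topology route fails in the bounded variation case, I would instead argue by contradiction. Assuming $\xi(K)=\infty$, I would pick points $x_k\in K$ with $\xi$ accumulating at some $x_*$, and use the positivity of $r(x_*,\cdot)$ near $x_*$. This should show that $x_*$ belongs to no $G_n$ with finite potential, which contradicts that the exceptional set is empty. Finally, $\sigma$-finiteness of $\xi$ is only needed to match the standing assumption in \cite{BB}.
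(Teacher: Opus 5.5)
Your direction (1)$\Rightarrow$(2) has a genuine gap. You use only that the sets $G_n$ increase to $\R$ and carry bounded potentials. You never use the nest (exit) condition that is part of $\Lambda$-strong smoothness, and without it the implication is false when $X$ has bounded variation. (With unbounded variation the fine topology is Euclidean, and your compactness argument would go through.)

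In the bounded variation case, $X_t-x\sim\mathbf{d}t$ as $t\downarrow 0$ and $X$ creeps upward. Hence a set is finely open if and only if, with each of its points $x$, it contains some $[x,x+\delta)$. For this half-open topology $[a,b]$ is not compact, so neither compactness nor a Baire argument on $K\setminus G_n^{\circ}$ can produce a single $G_N\supset K$.

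Concretely, take $G_n=(-\infty,1-1/n)\cup[1,\infty)$ and $\xi(dy)=(1-y)^{-2}1_{\{y<1\}}\,dy$. These are finely open sets $G_n\uparrow\R$ with $\xi(G_n)=n$ and $U(1_{G_n}\xi)\le\Phi'(1)\,n$ (in fact $r\le\Phi'(1)$ everywhere), yet $\xi([0,1])=\infty$. Your fallback fails on the same example. The accumulation point $x_*=1$ lies in every $G_n$ and the potentials there are finite, because each $G_n$ simply omits the strip $[1-1/n,1)$ that carries the mass.

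The missing idea is to use the nest condition. This is what the paper does, and it makes the resolvent density unnecessary. By the definition and \cite[Theorem 1.8.5]{BB} there are Borel sets $B_n\uparrow$ with $\xi(B_n)<\infty$ and $\inf_n\E_x\bigl[u(X^{\Ei}_{T_n})1_{\{T_n<\infty\}}\bigr]=0$ for every $x\in\R$, the exceptional set being empty. Here $T_n:=T_{\R\setminus B_n}$ and $u=Uf$ with $f\in(0,1]$; take $f\equiv 1$, so $u\equiv 1$.

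Suppose $[-a,a]\not\subset B_n$ for all $n$. A path started at $-a$ visits every point of $[-a,a]$ before exceeding $a$, so $\p_{-a}(T_n<\infty)\ge\p_{-a}(T_{(a,\infty)}<\infty)=e^{-2\Phi(1)a}$ for all $n$. This contradicts the nest condition at $-a$. Hence $[-a,a]\subset B_N$ for some $N$, and $\xi([-a,a])\le\xi(B_N)<\infty$. In the example above it is precisely this condition that fails, since from $0$ the set $[1-1/n,1)$ is hit before $\tau_1^+$.

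The same conflation affects your (2)$\Rightarrow$(1). Covering plus bounded potentials is not the nest property. You still must check that $\E_x\bigl[u(X^{\Ei}_{T_n})1_{\{T_n<\infty\}}\bigr]\le\p_x(T_n<\infty)\to 0$ for $B_n=(-n,n)$. This holds because the exit time of $X$ from $(-n,n)$ tends to infinity while the killing time is an independent exponential, and it is the actual content of that direction. (Also, your density should read $\Phi'(1)e^{-\Phi(1)(y-x)}-W^{(1)}(x-y)$.)
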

\begin{proof}
We prove (1)$\Rightarrow$(2). 
It is sufficient to prove that $\xi ([-a, a])<\infty$ for any $a>0$ under the assumption that $\xi$ is $\Lambda$-strongly smooth. 
By the definition of $\Lambda$-strongly smooth and by \cite[Theorem 1.8.5]{BB}, there exists an increasing sequence $\{B_n\}_{n\in\N}\subset \mathcal{B}(\R)$ such that $\xi(B_n)<\infty$ and 
\begin{align}
\left\{x \in\R:   \inf_{n\in\N}\E_x\left[u(X^{\Ei}_{T_{\R\backslash B_n}}) 1_{\{T_{\R\backslash B_n}<\infty\}} \right]>0\right\}=\varnothing
\label{nest}
\end{align}
for any excessive function $u$ which has the expression  $u(x):=\E_x\left[\int_0^\infty f(X^{\Ei}_t)dt \right]$ for some measurable function $f$ with $f \in (0,1]$. 
We assume $[-a, a]\not\subset B_n$ for all $n\in\N$ and show that this does not hold. In fact, if $f\equiv 1$ {then} $u\equiv 1$, and 
\begin{align}
\E_{-a}\left[u(X^{\Ei}_{T_{\R\backslash B_n}}){1_{\{T_{\R\backslash B_n}<\infty\}}}\right]
=\p_{-a}\left(T_{\R\backslash B_n}<\infty \right)
\geq \p_{-a}\left(T_{(a, \infty)}<\infty \right)=e^{-2\Phi(1) a}
\end{align}
for all $n\in\N$,  {where the inequality uses the fact that if $X$ starts from $-a$ and reaches $(a,\infty)$, it must pass through all values in $[-a, a]$ beforehand, since $X$ has no positive jumps. Hence, we have that  \[-a\in \left\{x \in\R:   \inf_{n\in\N}\E_x\left[u(X^{\Ei}_{T_{\R\backslash B_n}}) \right]>0\right\}\] which is a contradiction to \eqref{nest}.}
%
Thus, $[-a, a]\subset B_n$ for large enough $n$ and $\xi([-a, a])<\infty$ holds. 

We prove (2)$\Rightarrow$(1). 
It is sufficient to show the existence of a sequence $\{B_n\}_{n\in\N}$ satisfying the condition stated previously, under the assumption that $\xi$ is a Radon measure.
We take $B_n=[-n, n]$, then $\xi(B_n)<\infty$. 
Then, for $x \in \bR$ and any excessive function $u$ which has the expression above, we have, for $n\in\N$ such that $|x|<n$,
\begin{align}
\E_x\left[u(X^{\Ei}_{T_{\R\backslash B_n}}) \right]
\leq\p_0\left[T_{(-\infty, -n-x)\cup(n-x, \infty)}<\infty \right]
=\E_0\left[e^{-(\tau^-_{-n-x}\land\tau^+_{n-x})}\right]\to 0 \quad\text{as}\quad n\to\infty. 
\end{align}
Thus, \eqref{nest} holds. 
\end{proof}
\subsection{Proof of Proposition \ref{PropD2}}
{We will use some results about the Revuz measure presented in \cite[Section 6.5]{BB}. 
		However, in \cite[Section 6.5]{BB}, a theory is developed concerning the Revuz measures corresponding to positive left additive functionals (PLAFs).
 Therefore, we will first explain that, in the case of $X^{\Ei}$, the Revuz measures associated with PcNAFs and PLAFs coincide.} 
By \eqref{dis} and Proposition 8.11 in \cite{Getoor}, 
it holds 
\begin{align}
\nu_A(B)=\int_\R\E_x\left[\int_{(0, \infty)}1_B(X^{\Ei}_t)dA_t \right]dx
,\quad B\in\mathcal{B}(\R). \label{another_def}
\end{align}
{On the other hand,} by \cite[Proposition 6.5.5]{BB}, there exists PLAF $\hat{A}=\{\hat{A}_t:t\geq 0\}$ such that 
\begin{align}
\label{euivLR}
A_t(\omega)=\hat{A}_{t+}(\omega)-\hat{A}_{0+}(\omega),\quad\omega\in\Omega, \ t\geq 0.
\end{align}

{First,} we want to confirm that $\left\{x\in\R:\E_x\left[\hat{A}_{0+}\right]>0 \right\}$ is at most countable, and thus that  $\int_\R\E_x\left[\hat{A}_{0+}\right]dx=0$ in the case of spectrally negative L\'evy processes. 
{By the definition of a PLAF (see page 229 in \cite{BB}) and the discussion at the bottom of page 231 in \cite{BB}}, there exists function $f_{\hat{A}}$ 
from $\R$ to $[0, \infty)$ such that $\sum_{s\in[0,t)} (\hat{A}_{s+}-\hat{A}_{s})=\sum_{s\in[0,t)}f_{\hat{A}}(X_s) $ for $t \geq 0$ and thus $\E_x\left[\hat{A}_{0+}\right]=f_{\hat{A}}(x)$ for $x \in \bR$. 
{We take $a, b \in \R$ and consider a path of $X^{\Ei}$ starting from $a$ up to the first hitting time of $b$. Then, by the absence of positive jumps, the path of $X^{\Ei}$ visits all values in $[a, b)$; that is, under $\p_a$,}
\begin{align}
\sum_{x\in[a, b)}f_{\hat{A}}(x)\leq 
\sum_{t\leq T_{[b,\infty)}}f_{\hat{A}}(X^{\Ei}_t)\leq\hat{A}_{T_{[b, \infty)}} < \infty, \qquad\text{on }\{{T_{[b,\infty)}<\infty}\}. 
\end{align} 
Thus, $\{x\in[a, b):f_{\hat{A}}(x)>0\}$ is at most countable for any $a, b \in\bR$ and we get the conclusion above. 

Therefore, by \eqref{euivLR}, {and using the facts that $A_0=0$ and that $A$ is right-continuous 
}, we have 
\begin{align}
\nu_A(B)&=\int_\R\E_x\left[\int_{[0, \infty)}1_B(X^{\Ei}_t)dA_t \right]dx\notag\\&=\int_\R\E_x\left[\int_{[0, \infty)}1_B(X^{\Ei}_t)d\hat{A}_t \right]dx-\int_\R1_B(x)\E_x\left[\hat{A}_{0+}\right]dx
\\&=\int_\R\E_x\left[\int_{[0, \infty)}1_B(X^{\Ei}_t)d\hat{A}_t \right]dx
=:\nu_{\hat{A}}(B),\qquad\qquad B\in\mathcal{B}(\R).
\label{eqRev}
\end{align}
	{Recall that in \eqref{dis}, we showed that $\Lambda$, which we are using as the reference measure here, is a potential that can be expressed in terms of a potential kernel. Combining this fact with \cite[Remark 1 in p.227]{BB}, we conclude that $\nu_{\hat{A}}$ coincides with the Revuz measure associated with $\hat{A}$ defined in \cite[p.231]{BB}}. 
By \cite[Corollary 6.5.12]{BB}, the measure $\nu_{\hat{A}}$ is $\Lambda$-strongly smooth. Thus, by Proposition \ref{PropD1} and \eqref{eqRev}, the proof is complete. 
\subsection{Proof of Proposition \ref{conv_radon}}
	By \cite[Corollary 6.5.4]{BB}, there exists a proper optional homogeneous random measure  $k_\nu $ whose Revuz measure is $\nu $. {Moreover, using} \cite[Corollary 4.5.12]{BB}, there also exists a proper optional perfect homogeneous random measure $k'_\nu $ {whose resolvent is a regular supermedian kernel, and} whose Revuz measure, in the sense of the definition in \cite[p.~226]{BB}, coincides with $ \nu $. {Furthermore, since the measure $\nu $ is $\Lambda $-strongly smooth (by Proposition~\ref{PropD1}),} it follows from \cite[Theorem 6.5.9]{BB} that there exists a PLAF $\hat{A}$ such that its Revuz measure is $\nu$. 
By \cite[Proposition 6.5.5]{BB}, we can take a PcNAF $A$ satisfying \eqref{euivLR} and thus by \eqref{eqRev}, the Revuz measure associated with $A$ is $\nu$. 
\subsection{Proof of Proposition \ref{Prfof(2.10)}}
	It is easy to confirm that the Revuz measure of $L^a$ is $\delta_a$.  
			Let $\nu_A$ be the Revuz measure of the PcNAF $A$, and consider the PcNAF $A^\prime=\{A^\prime_t:t\geq0\}$ defined by 
			\begin{align}
				A^\prime_t:=\int_\R L^a_t \nu_A(da),\qquad t\geq0,
				\label{rep_A}
			\end{align}
			{and denote by $\nu_{A^\prime}$ to its Revuz measure.} 
			Then, for a non-negative measurable function $f$, 
			\begin{align}
			\int_\R f(y)\nu_A(dy)
			= \int_\R  \int_\R f(z)\delta_y(dz)\nu_A(dy)
			&=\int_\R  \left( \lim_{t\downarrow 0}\frac{1}{t}\int_\R\E_x\left[\int_{(0,t]}f({X^{\Ei}_s}) dL^y_s\right]dx\right)\nu_A(dy)
			\\
			&=\lim_{t\downarrow 0}\frac{1}{t}\int_\R  \left( \int_\R\E_x\left[\int_{(0,t]}f({X^{\Ei}_s}) dL^y_s\right]dx\right)\nu_A(dy)
			\\
			&=\lim_{t\downarrow 0}\frac{1}{t}\int_\R\E_x\left[\int_{(0,t]}f({X^{\Ei}_s}) dA^\prime_s\right]dx= \int_\R f(y)\nu_{A^\prime}(dy),
			\end{align}
			In the second equality above, we used \eqref{Revuz_measure}, and in the third equality, we applied the monotone convergence theorem, since the inner term in the integral is non-decreasing in $t$ by (8.9) in \cite{Getoor}. In other words, the respective Revuz measures of $A$ and $A'$ coincide.
			
		{Let us define the PLAFs $\hat{A}$ and $\hat{A}^\prime$ for $X^{\Ei}$ in the same way as in \eqref{euivLR}, using the PcNAFs $A$ and $A^\prime$ for $X^{\Ei}$, respectively.
		Then, by the proof of Proposition \ref{PropD2}, the Revuz measures of $\hat{A}$ and $\hat{A}^\prime$ are both equal to $\nu_A$. Moreover, $\nu_A$ coincides with the Revuz measure associated with both $\hat{A}$ and $\hat{A}^\prime$, as defined in \cite[p.231]{BB}, by the same discussion following identity \eqref{eqRev}.} 
{On the other hand, note that exceptional sets are $\Lambda$-strongly inessential (see p.229 of \cite{BB}). Moreover, $\Lambda$-strongly inessential are $\Lambda$-polar (see p.168 of \cite{BB}), and hence are empty in this case.}
			{Hence,} by \cite[Theorem 6.5.11 ii) and iii)]{BB} 
		we have 
		\begin{align}
		\p_x\left(\{\hat{A}_t\neq\hat{A}^\prime_t\text{ for some }t\geq 0\} \right)=0,\quad x \in\R,
		\end{align}
		and thus 
		\begin{align}
		\p_x\left(\{A_t\neq A^\prime_t\text{ for some }t\geq 0\} \right)=0,\quad x \in\R. 
		\end{align}
As a result, we have 
			\eqref{representation_equation}. 
			\section{Proof of Theorem \ref{LiZhouProof_thm}}\label{LiZhouProof}
		We assume that the L\'evy process $X$ has paths of bounded variation and prove the statement of Theorem \ref{LiZhouProof_thm}. 
				We proceed by induction on $n$. 
                Since there is no loss of generality in assuming that $a_n<b$, we proceed under this assumption.
                Assume that
				\eqref{local_time_exit}, \eqref{tsd_u_0}, and \eqref{local_time_resolvent}
				hold for $n=k-1$, for some $k\in\mathbb{N}$.
				We then establish these identities for $n=k$.
				The base case $n=1$ follows directly from the general argument,
				and we therefore omit its proof.
				
				(i) [Two-sided exit problem].- For $c<x<b$, define
				\begin{align*}
					g(x;b,c):=\E_{x}\left[e^{-q\tau_b^+-\sum_{i=1}^kp_iL^{a_i}_{\tau_b^+}};\tau_b^+<\tau_c^-\right].
				\end{align*}
				{Then, for $x<a_k$
				, it follows from the strong Markov property,
					\eqref{lt_bv}, and the induction hypothesis (see \eqref{local_time_exit})
					that}
				\begin{align}\label{ts_1}
					g(x;b,c)&=\E_{x}\left[e^{-q\tau_{a_k}^+-\sum_{i=1}^kp_iL^{a_i}_{\tau_{a_k}^+}}g(a_k;b,c);\tau_{a_k}^+<\tau_c^-\right]\notag\\
					&=g(a_k;b,c)\E_{x}\left[e^{-q\tau_{a_k}^+-\sum_{i=1}^{k-1}p_iL^{a_i}_{\tau_{a_k}^+}}e^{-p_k/{\mathbf{d}}};\tau_{a_k}^+<\tau_c^-\right]\notag\\
					&=e^{-p_k/{\mathbf{d}}}g(a_k;b,c)\frac{W^{(q,p_1,\dots,p_{k-1})}_{(a_1,\dots,a_{k-1})}(x,c)}{W^{(q,p_1,\dots,p_{k-1})}_{(a_1,\dots,a_{k-1})}(a_k,c)}.
				\end{align}
				On the other hand, for $x\geq a_k$, using the strong Markov property together with \eqref{ts_1} gives
				\begin{align}\label{lt_bv_5}
					g(x;&b,c)=\E_{x}\left[e^{-q\tau_{b}^+};\tau_{b}^+<\tau_{a_k}^-\right]+\E_{x}\left[e^{-q\tau_{a_k}^-}g(X_{\tau_{a_k}^-};b,c);\tau_{a_k}^-<\tau_b^+\right]\notag\\
					&=\frac{W^{(q)}(x-a_k)}{W^{(q)}(b-a_k)}+e^{-p_k/{\mathbf{d}}}g(a_k;b,c)\E_{x}\left[e^{-q\tau_{a_k}^-}\frac{W^{(q,p_1,\dots,p_{k-1})}_{(a_1,\dots,a_{k-1})}(X_{\tau_{a_k}^-},c)}{W^{(q,p_1,\dots,p_{k-1})}_{(a_1,\dots,a_{k-1})}(a_k,c)};\tau_{a_k}^-<\tau_b^+\right]\notag\\
					&=\frac{W^{(q)}(x-a_k)}{W^{(q)}(b-a_k)}+e^{-p_k/{\mathbf{d}}}g(a_k;b,c)\left(\frac{W^{(q,p_1,\dots,p_{k-1})}_{(a_1,\dots,a_{k-1})}(x,c)}{W^{(q,p_1,\dots,p_{k-1})}_{(a_1,\dots,a_{k-1})}(a_k,c)}-\frac{W^{(q)}(x-a_k)}{W^{(q)}(b-a_k)}\frac{W^{(q,p_1,\dots,p_{k-1})}_{(a_1,\dots,a_{k-1})}(b,c)}{W^{(q,p_1,\dots,p_{k-1})}_{(a_1,\dots,a_{k-1})}(a_k,c)}\right),
				\end{align}
				{where, in the third equality, we used 
				the fact that the Markov property and the induction hypothesis imply that for $c\leq a_k\leq x\leq b$
				\begin{align}\label{lema2.1}
				&\frac{W^{(q,p_1,\dots,p_{k-1})}_{(a_1,\dots,a_{k-1})}(x,c)}{W^{(q,p_1,\dots,p_{k-1})}_{(a_1,\dots,a_{k-1})}(b,c)}=\E_{x}\left[e^{-q\tau_b^+-\sum_{i=1}^{k-1}p_iL^{a_i}_{\tau_b^+}};\tau_{b}^+<\tau_{c}^-\right]\notag\\&\qquad\qquad=\E_{x}\left[e^{-q\tau_b^+};\tau_{b}^+<\tau_{a_k}^-\right]+\E_{x}\left[e^{-q\tau_{a_k}^-}\E_{X_{\tau_{a_k}^-}}\left[e^{-q\tau_b^+-\sum_{i=1}^{k-1}p_iL^{a_i}_{\tau_b^+}};\tau_{b}^+<\tau_{c}^-\right];\tau_{a_k}^-<\tau_b^+\right]\notag\\
				&\qquad\qquad=\frac{W^{(q)}(x-a_k)}{W^{(q)}(b-a_k)}+\E_{x}\left[e^{-q\tau_{a_k}^-}\frac{W^{(q,p_1,\dots,p_{k-1})}_{(a_1,\dots,a_{k-1})}(X_{\tau_{a_k}^-},c)}{W^{(q,p_1,\dots,p_{k-1})}_{(a_1,\dots,a_{k-1})}(b,c)};\tau_{a_k}^-<\tau_b^+\right].
				\end{align}
			}
				Taking $x=a_k$ in \eqref{lt_bv_5} and solving for $g(a_k;b,c)$, we obtain that
				\begin{align}\label{lt_bv_4}
					g(a_k;b,c)&=e^{p_k/{\mathbf{d}}}\left((e^{p_k/{\mathbf{d}}}-1)\frac{W^{(q)}(b-a_k)}{W^{(q)}(0)}+\frac{W^{(q,p_1,\dots,p_{k-1})}_{(a_1,\dots,a_{k-1})}(b,c)}{W^{(q,p_1,\dots,p_{k-1})}_{(a_1,\dots,a_{k-1})}(a_k,c)}\right)^{-1}\notag\\
					&=e^{p_k/{\mathbf{d}}}\frac{W^{(q,p_1,\dots,p_{k-1})}_{(a_1,\dots,a_{k-1})}(a_k,c)}{W^{(q,p_1,\dots,p_{k})}_{(a_1,\dots,a_{k})}(b,c)},
				\end{align}
				where in the last equality we used that $W^{(q)}(0)={\mathbf{d}}^{-1}$ together with \eqref{volterra_lt}.
				
				Then, using \eqref{lt_bv_4} in \eqref{lt_bv_5} gives , for $x\geq a_k$,
				\begin{align*}
					g(x;b,c)&=\frac{W^{(q)}(x-a_k)}{W^{(q)}(b-a_k)}\left(1-\frac{W^{(q,p_1,\dots,p_{k-1})}_{(a_1,\dots,a_{k-1})}(b,c)}{W^{(q,p_1,\dots,p_{k})}_{(a_1,\dots,a_{k})}(b,c)}\right)+\frac{W^{(q,p_1,\dots,p_{k-1})}_{(a_1,\dots,a_{k-1})}(x,c)}{W^{(q,p_1,\dots,p_{k})}_{(a_1,\dots,a_{k})}(b,c)}\notag\\
					&=\frac{{\mathbf{d}}(e^{p_k/{\mathbf{d}}}-1)W^{(q)}(x-a_k)W^{(q,p_1,\dots,p_{k-1})}_{(a_1,\dots,a_{k-1})}(a_k,c)+W^{(q,p_1,\dots,p_{k-1})}_{(a_1,\dots,a_{k-1})}(x,c)}{W^{(q,p_1,\dots,p_{k})}_{(a_1,\dots,a_{k})}(b,c)}\notag\\
					&=\frac{W^{(q,p_1,\dots,p_{k})}_{(a_1,\dots,a_{k})}(x,c)}{W^{(q,p_1,\dots,p_{k})}_{(a_1,\dots,a_{k})}(b,c)},
				\end{align*}
				where in the second and third equalities we used \eqref{volterra_lt}.

				We also use \eqref{lt_bv_4} in \eqref{ts_1} and obtain, for $x<a_k$, 
				\begin{align}
					g(x;b,c)=\frac{W^{(q,p_1,\dots,p_{k-1})}_{(a_1,\dots,a_{k-1})}(a_k,c)}{W^{(q,p_1,\dots,p_{k})}_{(a_1,\dots,a_{k})}(b,c)}\frac{W^{(q,p_1,\dots,p_{k-1})}_{(a_1,\dots,a_{k-1})}(x,c)}{W^{(q,p_1,\dots,p_{k-1})}_{(a_1,\dots,a_{k-1})}(a_k,c)}
					=\frac{W^{(q,p_1,\dots,p_{k})}_{(a_1,\dots,a_{k})}(x,c)}{W^{(q,p_1,\dots,p_{k})}_{(a_1,\dots,a_{k})}(b,c)}.
				\end{align}
				
				(ii) [Two-sided exit problem downwards].- For $c<x<b$, define
				\begin{align*}
				\tilde{g}(x;b,c):=\E_{x}\left[e^{-q\tau_c^--\sum_{i=1}^kp_iL^{a_i}_{\tau_c^-}};\tau_c^-<\tau_b^+\right].
				\end{align*}{
				{By the strong Markov property,
				\eqref{lt_bv}, and the induction hypothesis (see \eqref{tsd_u_0}), we obtain, for $x<a_k$
				, that}}
				\begin{align}\label{tsd_1}
				\tilde{g}(x;b,c)&=\E_x\left[e^{-q\tau_c^--\sum_{i=1}^{k-1}p_iL^{a_i}_{\tau_c^-}};\tau_c^-<\tau_{a_k}^+\right]+\tilde{g}(a_k;b,c)\E_x\left[e^{-q\tau_{a_k}^+-\sum_{i=1}^{k-1}p_iL^{a_i}_{\tau_{a_k}^+}}e^{-p_k/{\mathbf{d}}};\tau_{a_k}^+<\tau_c^-\right]\notag\\
				&
				\begin{aligned}
				=Z^{(q,p_1,\dots,p_{k-1})}_{(a_1,\dots,a_{k-1})}(x,c)-&\frac{W^{(q,p_1,\dots,p_{k-1})}_{(a_1,\dots,a_{k-1})}(x,c)}{W^{(q,p_1,\dots,p_{k-1})}_{(a_1,\dots,a_{k-1})}(a_k,c)}Z^{(q,p_1,\dots,p_{k-1})}_{(a_1,\dots,a_{k-1})}(a_k,c)\\
				&\qquad +\frac{W^{(q,p_1,\dots,p_{k-1})}_{(a_1,\dots,a_{k-1})}(x,c)}{W^{(q,p_1,\dots,p_{k-1})}_{(a_1,\dots,a_{k-1})}(a_k,c)}e^{-p_k/{\mathbf{d}}}\tilde{g}(a_k;b,c).
				\end{aligned}
				\end{align} 
				{
					Applying the strong Markov property along with the induction hypothesis and \eqref{lema2.1}, we obtain for $c\leq a_k\leq x\leq b$, that
				\begin{align*}
					Z^{(q,p_1,\dots,p_{k-1})}_{(a_1,\dots,a_{k-1})}(x,c)-&\frac{W^{(q,p_1,\dots,p_{k-1})}_{(a_1,\dots,a_{k-1})}(x,c)}{W^{(q,p_1,\dots,p_{k-1})}_{(a_1,\dots,a_{k-1})}(b,c)}Z^{(q,p_1,\dots,p_{k-1})}_{(a_1,\dots,a_{k-1})}(b,c)=\E_{x}\left[e^{-q\tau_c^--\sum_{i=1}^{k-1}p_iL^{a_i}_{\tau_c^-}};\tau_{c}^-<\tau_{b}^+\right]\notag\\&=\E_{x}\left[e^{-q\tau_{a_k}^-}\E_{X_{\tau_{a_k}^-}}\left[e^{-q\tau_c^--\sum_{i=1}^{k-1}p_iL^{a_i}_{\tau_c^-}};\tau_{c}^-<\tau_{b}^+\right];\tau_{a_k}^-<\tau_b^+\right]\notag\\
					&=\E_{x}\left[e^{-q\tau_{a_k}^-}Z^{(q,p_1,\dots,p_{k-1})}_{(a_1,\dots,a_{k-1})}(X_{\tau_{a_k}^-},c);\tau_{a_k}^-<\tau_b^+\right]\notag\\&-\E_{x}\left[e^{-q\tau_{a_k}^-}\frac{W^{(q,p_1,\dots,p_{k-1})}_{(a_1,\dots,a_{k-1})}(X_{\tau_{a_k}^-},c)}{W^{(q,p_1,\dots,p_{k-1})}_{(a_1,\dots,a_{k-1})}(b,c)};\tau_{a_k}^-<\tau_b^+\right]Z^{(q,p_1,\dots,p_{k-1})}_{(a_1,\dots,a_{k-1})}(b,c)\notag\\
					&=\E_{x}\left[e^{-q\tau_{a_k}^-}Z^{(q,p_1,\dots,p_{k-1})}_{(a_1,\dots,a_{k-1})}(X_{\tau_{a_k}^-},c);\tau_{a_k}^-<\tau_b^+\right]\notag\\&-\left(\frac{W^{(q,p_1,\dots,p_{k-1})}_{(a_1,\dots,a_{k-1})}(x,c)}{W^{(q,p_1,\dots,p_{k-1})}_{(a_1,\dots,a_{k-1})}(b,c)}-\frac{W^{(q)}(x-a_k)}{W^{(q)}(b-a_k)}\right)Z^{(q,p_1,\dots,p_{k-1})}_{(a_1,\dots,a_{k-1})}(b,c).
				\end{align*}
				Therefore, for {$a_k\leq x\leq b$}, we have
				\begin{align}\label{lemma2.2}
				\E_{x}\Big[e^{-q\tau_{a_k}^-}Z^{(q,p_1,\dots,p_{k-1})}_{(a_1,\dots,a_{k-1})}&(X_{\tau_{a_k}^-},c);\tau_{a_k}^-<\tau_b^+\Big]\notag\\&=Z^{(q,p_1,\dots,p_{k-1})}_{(a_1,\dots,a_{k-1})}(x,c)-\frac{W^{(q)}(x-a_k)}{W^{(q)}(b-a_k)}Z^{(q,p_1,\dots,p_{k-1})}_{(a_1,\dots,a_{k-1})}(b,c).
				\end{align}}
				}
				Now, applying the strong Markov property and using \eqref{tsd_1} together with \eqref{lema2.1} and \eqref{lemma2.2}, we obtain for {$x\geq a_k$},
				\begin{align}\label{tsd_2}
				&\tilde{g}(x;b,c)=\E_x\left[e^{-q\tau_{a_k}^-}\tilde{g}(X_{\tau_{a_k}^-};b,c);\tau_{a_k}^-<\tau_b^+\right]\notag\\
				&=\E_x\left[e^{-q\tau_{a_k}^-}Z^{(q,p_1,\dots,p_{k-1})}_{(a_1,\dots,a_{k-1})}(X_{\tau_{a_k}^-},c);\tau_{a_k}^-<\tau_b^+\right]\notag\\&+\left(\frac{\tilde{g}(a_k;b,c)e^{-p_k/{\mathbf{d}}}}{W^{(q,p_1,\dots,p_{k-1})}_{(a_1,\dots,a_{k-1})}(a_k,c)}-\frac{Z^{(q,p_1,\dots,p_{k-1})}_{(a_1,\dots,a_{k-1})}(a_k,c)}{W^{(q,p_1,\dots,p_{k-1})}_{(a_1,\dots,a_{k-1})}(a_k,c)}\right)\E_x\left[e^{-q\tau_{a_k}^-}W^{(q,p_1,\dots,p_{k-1})}_{(a_1,\dots,a_{k-1})}(X_{\tau_{a_k}^-},c);\tau_{a_k}^-<\tau_b^+\right]\notag\\
				&=Z^{(q,p_1,\dots,p_{k-1})}_{(a_1,\dots,a_{k-1})}(x,c)-\frac{W^{(q)}(x-a_k)}{W^{(q)}(b-a_k)}Z^{(q,p_1,\dots,p_{k-1})}_{(a_1,\dots,a_{k-1})}(b,c)+\left(\frac{\tilde{g}(a_k;b,c)e^{-p_k/{\mathbf{d}}}}{W^{(q,p_1,\dots,p_{k-1})}_{(a_1,\dots,a_{k-1})}(a_k,c)}-\frac{Z^{(q,p_1,\dots,p_{k-1})}_{(a_1,\dots,a_{k-1})}(a_k,c)}{W^{(q,p_1,\dots,p_{k-1})}_{(a_1,\dots,a_{k-1})}(a_k,c)}\right)\notag\\
				&\times\left(W^{(q,p_1,\dots,p_{k-1})}_{(a_1,\dots,a_{k-1})}(x,c)-\frac{W^{(q)}(x-a_k)}{W^{(q)}(b-a_k)}W^{(q,p_1,\dots,p_{k-1})}_{(a_1,\dots,a_{k-1})}(b,c)\right)\notag\\
				&=Z^{(q,p_1,\dots,p_{k})}_{(a_1,\dots,a_{k})}(x,c)-\frac{W^{(q)}(x-a_k)}{W^{(q)}(b-a_k)}Z^{(q,p_1,\dots,p_{k})}_{(a_1,\dots,a_{k})}(b,c)+\left(\frac{\tilde{g}(a_k;b,c)e^{-p_k/{\mathbf{d}}}}{W^{(q,p_1,\dots,p_{k-1})}_{(a_1,\dots,a_{k-1})}(a_k,c)}-\frac{Z^{(q,p_1,\dots,p_{k-1})}_{(a_1,\dots,a_{k-1})}(a_k,c)}{W^{(q,p_1,\dots,p_{k-1})}_{(a_1,\dots,a_{k-1})}(a_k,c)}\right)\notag\\
				&\times\left(W^{(q,p_1,\dots,p_{k-1})}_{(a_1,\dots,a_{k-1})}(x,c)-\frac{W^{(q)}(x-a_k)}{W^{(q)}(b-a_k)}W^{(q,p_1,\dots,p_{k-1})}_{(a_1,\dots,a_{k-1})}(b,c)\right),
				\end{align}
				where in the last equality we used \eqref{volterra_lt}.
				
				Setting $x=a_k$ and solving for $\tilde{g}(a_k;b,c)$, we obtain that
				
				\begin{align}\label{tsd_3}
				\tilde{g}(a_k;b,c)&=\frac{\frac{Z^{(q,p_1,\dots,p_{k-1})}_{(a_1,\dots,a_{k-1})}(a_k,c)}{W^{(q,p_1,\dots,p_{k-1})}_{(a_1,\dots,a_{k-1})}(a_k,c)}W^{(q,p_1,\dots,p_{k
				})}_{(a_1,\dots,a_{k
				})}(b,c)-Z^{(q,p_1,\dots,p_{k
				})}_{(a_1,\dots,a_{k
				})}(b,c)}{ \displaystyle (e^{p_k/{\mathbf{d}}}-1)\frac{W^{(q)}(b-a_k)}{W^{(q)}(0)}+\frac{W^{(q,p_1,\dots,p_{k-1})}_{(a_1,\dots,a_{k-1})}(b,c)}{W^{(q,p_1,\dots,p_{k-1})}_{(a_1,\dots,a_{k-1})}(a_k,c)}}e^{p_k/{\mathbf{d}}}\notag\\
				&=\left(Z^{(q,p_1,\dots,p_{k-1})}_{(a_1,\dots,a_{k-1})}(a_k,c)-\frac{W^{(q,p_1,\dots,p_{k-1})}_{(a_1,\dots,a_{k-1})}(a_k,c)}{W^{(q,p_1,\dots,p_{k})}_{(a_1,\dots,a_{k})}(b,c)}Z^{(q,p_1,\dots,p_{k})}_{(a_1,\dots,a_{k})}(b,c)\right)e^{p_k/{\mathbf{d}}},
				\end{align}
				where in the last equality we used the recursion given in \eqref{volterra_lt}.
				Then, for $x\geq a_k$, 
				using \eqref{tsd_3} in \eqref{tsd_2}, yields
				\begin{align*}
				\tilde{g}(x;b,c)&=Z^{(q,p_1,\dots,p_{k})}_{(a_1,\dots,a_{k})}(x,c)-\frac{W^{(q)}(x-a_k)}{W^{(q)}(b-a_k)}Z^{(q,p_1,\dots,p_{k})}_{(a_1,\dots,a_{k})}(b,c)\notag\\&-\frac{Z^{(q,p_1,\dots,p_{k})}_{(a_1,\dots,a_{k})}(b,c)}{W^{(q,p_1,\dots,p_{k})}_{(a_1,\dots,a_{k})}(b,c)}\left(W^{(q,p_1,\dots,p_{k})}_{(a_1,\dots,a_{k})}(x,c)-\frac{W^{(q)}(x-a_k)}{W^{(q)}(b-a_k)}W^{(q,p_1,\dots,p_{k})}_{(a_1,\dots,a_{k})}(b,c)\right),
				\end{align*}
				which simplifies to \eqref{tsd_u_0}. For the case $x<a_k$, we use \eqref{tsd_3} in \eqref{tsd_1}, and the fact that $W^{(q,p_1,\dots,p_{k-1})}_{(a_1,\dots,a_{k-1})}(x,c)=W^{(q,p_1,\dots,p_{k})}_{(a_1,\dots,a_{k})}(x,c)$ and $Z^{(q,p_1,\dots,p_{k-1})}_{(a_1,\dots,a_{k-1})}(x,c)=Z^{(q,p_1,\dots,p_{k})}_{(a_1,\dots,a_{k})}(x,c)$. 
				
				(iii) [Resolvents].- For $c<x<b$ {and a non-negative, measurable, and bounded function $f:\R\to\R$, we define}
				\begin{align*}
				h(x;b,c):=\E_x\left[\int_0^{\tau_b^+\wedge\tau_c^-}e^{-qt-\sum_{i=1}^kp_iL^{a_i}_t}f(X_t)	dt\right].
				\end{align*}
				{For $x<a_k$, an application of the strong Markov property,
					together with \eqref{lt_bv} and the induction hypothesis
					(cf. \eqref{local_time_resolvent}), yields}
				\begin{align}\label{resolv_1}
				h(x;b,c)&=\E_x\left[\int_0^{\tau_{a_k}^+\wedge\tau_c^-}e^{-qt-\sum_{i=1}^kp_iL^{a_i}_t}f(X_t)	dt\right]+\E_x\left[\int_{\tau_{a_k}^+}^{\tau_b^+\wedge\tau_c^-}e^{-qt-\sum_{i=1}^kp_iL^{a_i}_t}f(X_t)dt\right]\notag\\
				&=\E_x\left[\int_0^{\tau_{a_k}^+\wedge\tau_c^-}e^{-qt-\sum_{i=1}^kp_iL^{a_i}_t}f(X_t)	dt\right]+\E_x\left[e^{-q\tau_{a_k}^+- p_k/\mathbf{d}}h(a_k;b,c);\tau_{a_k}^+<\tau_c^-\right]\notag\\
				&=\int_c^{a_k}f(y)\left\{\frac{W^{(q,p_1,\dots,p_{k-1})}_{(a_1,\dots,a_{k-1})}(x,c)}{W^{(q,p_1,\dots,p_{k-1})}_{(a_1,\dots,a_{k-1})}(a_k,c)}W^{(q,p_1,\dots,p_{k-1})}_{(a_1,\dots,a_{k-1})}(a_k,y)-W^{(q,p_1,\dots,p_{k-1})}_{(a_1,\dots,a_{k-1})}(x,y)\right\}dy\notag\\&+e^{-p_k/\mathbf{d}}h(a_k;b,c){\frac{W^{(q,p_1,\dots,p_{k-1})}_{(a_1,\dots,a_{k-1})}(x,c)}{W^{(q,p_1,\dots,p_{k-1})}_{(a_1,\dots,a_{k-1})}(a_k,c)}},
				\end{align}
				where in the last equality we used the induction assumption together with Step (i).
				
				On the other hand, for $x\geq a_k$, we have by the strong Markov property, and using \eqref{resolv_1} together with the induction assumption, we get
				\begin{align}\label{resol_2}
				h(x;b,c)&=\E_x\left[\int_0^{\tau_{a_k}^-\wedge\tau_b^+}e^{-qt-\sum_{i=1}^kp_iL^{a_i}_t}f(X_t)	dt\right]+\E_x\left[\int_{\tau_{a_k}^-}^{\tau_b^+\wedge\tau_c^-}e^{-qt-\sum_{i=1}^kp_iL^{a_i}_t}f(X_t)dt\right]\notag\\
				&=\E_x\left[\int_0^{\tau_{a_k}^-\wedge\tau_b^+}e^{-qt-\sum_{i=1}^kp_iL^{a_i}_t}f(X_t)	dt\right]+\E_x\left[e^{-q\tau_{a_k}^-}h(X_{\tau_{a_k}^-};b,c);\tau_{a_k}^-<\tau_b^+\right]\notag\\
				&=\int_{a_k}^bf(y)\left\{\frac{W^{(q,                                    p_1,\dots,p_{k-1})}_{(a_1,\dots,a_{k-1})}(x,a_k)}{W^{(q,p_1,\dots,p_{k-1})}_{(a_1,\dots,a_{k-1})}(b,a_k)}W^{(q,p_1,\dots,p_{k-1})}_{(a_1,\dots,a_{k-1})}(b,y)-W^{(q,p_1,\dots,p_{k-1})}_{(a_1,\dots,a_{k-1})}(x,y)\right\}dy\notag\\&+\int_c^{a_k}f(y)\Bigg\{\frac{W^{(q,p_1,\dots,p_{k-1})}_{(a_1,\dots,a_{k-1})}(a_k,y)}{W^{(q,p_1,\dots,p_{k-1})}_{(a_1,\dots,a_{k-1})}(a_k,c)}\E_x\left[e^{-q\tau_{a_k}^-}W^{(q,p_1,\dots,p_{k-1})}_{(a_1,\dots,a_{k-1})}(X_{\tau_{a_k}^-},c);\tau_{a_k}^-<\tau_b^+\right]\notag\\&-\E_x\left[e^{-q\tau_{a_k}^-}W^{(q,p_1,\dots,p_{k-1})}_{(a_1,\dots,a_{k-1})}(X_{\tau_{a_k}^-},y);\tau_{a_k}^-<\tau_b^+\right]\Bigg\}dy\notag\\&+h(a_k;b,c)\E_x\left[e^{-q\tau_{a_k}^--p_k/\mathbf{d}}\frac{W^{(q,p_1,\dots,p_{k-1})}_{(a_1,\dots,a_{k-1})}(X_{\tau_{a_k}^-},c)}{W^{(q,p_1,\dots,p_{k-1})}_{(a_1,\dots,a_{k-1})}(a_k,c)};\tau_{a_k}^-<\tau_b^+\right].
				\end{align}
				Note that $W^{(q,p_1,\dots,p_{k-1})}_{(a_1,\dots,a_{k-1})}(z,y)=W^{(q)}(z-y)$ for $y> a_{k-1}$.
				Hence, using \eqref{lema2.1} in \eqref{resol_2} yields, for $x\ge a_k$,
				\begin{align}\label{resol_3}
				h(x;b,c)&=\int_c^{b}f(y)\Bigg\{\frac{W^{(q,p_1,\dots,p_{k-1})}_{(a_1,\dots,a_{k-1})}(a_k,y)}{W^{(q,p_1,\dots,p_{k-1})}_{(a_1,\dots,a_{k-1})}(a_k,c)}\left(W^{(q,p_1,\dots,p_{k-1})}_{(a_1,\dots,a_{k-1})}(x,c)-\frac{W^{(q)}(x-a_k)}{W^{(q)}(b-a_k)}W^{(q,p_1,\dots,p_{k-1})}_{(a_1,\dots,a_{k-1})}(b,c)\right)\notag\\&-\left(W^{(q,p_1,\dots,p_{k-1})}_{(a_1,\dots,a_{k-1})}(x,y)-\frac{W^{(q)}(x-a_k)}{W^{(q)}(b-a_k)}W^{(q,p_1,\dots,p_{k-1})}_{(a_1,\dots,a_{k-1})}(b,y)\right)\Bigg\}dy\notag\\
				&+\frac{e^{-p_k/\mathbf{d}}h(a_k;b,c)}{W^{(q,p_1,\dots,p_{k-1})}_{(a_1,\dots,a_{k-1})}(a_k,c)}\left(W^{(q,p_1,\dots,p_{k-1})}_{(a_1,\dots,a_{k-1})}(x,c)-\frac{W^{(q)}(x-a_k)}{W^{(q)}(b-a_k)}W^{(q,p_1,\dots,p_{k-1})}_{(a_1,\dots,a_{k-1})}(b,c)\right).
				\end{align}
				Setting $x=a_k$ in \eqref{resol_3} and solving for $h(a_k;b,c)$, we obtain that{ 
				\begin{align}\label{resol_4}
				h(a_k;b,c)&=\frac{e^{p_k/\mathbf{d}}}{W^{(q,p_1,\dots,p_{k})}_{(a_1,\dots,a_{k})}(b,c)}\int_c^bf(y)\bigg\{W^{(q,p_1,\dots,p_{k-1})}_{(a_1,\dots,a_{k-1})}(b,y)W^{(q,p_1,\dots,p_{k-1})}_{(a_1,\dots,a_{k-1})}(a_k,c)\notag\\&-W^{(q,p_1,\dots,p_{k-1})}_{(a_1,\dots,a_{k-1})}(b,c)W^{(q,p_1,\dots,p_{k-1})}_{(a_1,\dots,a_{k-1})}(a_k,y)\bigg\},
				\end{align}}
				where we used identity \eqref{volterra_lt}.
				
				Thus, by applying \eqref{resol_4} to \eqref{resol_3} and \eqref{resolv_1}, using identity \eqref{volterra_lt}, and performing some computations, we obtain \eqref{local_time_resolvent}
			\section{Proof of Proposition \ref{exist_uni}}\label{proof_exist_uni}
				(i) First, we show the uniqueness of the solution to \eqref{volterra_prop}. To this end, note that using \eqref{volterra_prop}, we obtain for $x\geq y$
				\begin{align*}
				H(x,y)= h(x-y)+\int_{[y,x)}W^{(q)}(x-z)H(z,y)(\nu-\lambda)(dz)+W^{(q)}(0)H(x,y)(\nu-\lambda)(\{x\}).
				\end{align*}
				Then, solving for $H(x,y)$, we have
				\begin{align}\label{volterra_no_atom}
				H(x,y)=K(x)\left[h(x-y)+\int_{[y,x)}W^{(q)}(x-u)H(u,y)(\nu-\lambda)(du)\right],\qquad x\in[y,T],
				\end{align}
				with, 
				\begin{align}\label{bound_atom}
				K(x):=(1-W^{(q)}(0)(\nu-\lambda)\{x\})^{-1}\leq\sup_{x\in[y,T]}\Big{\{}(1-W^{(q)}(0)(\nu-\lambda)\{x\})^{-1}\Big{\}}:=C(y,T)<\infty,
				\end{align}
				where the last inequality follows from the assumption that $\inf_{x\in[y,T]}(1-W^{(q)}(0)(\nu-\lambda)\{x\})>0$.
				
				Now, consider two solutions $H(\cdot,y)$ and $\tilde{H}(\cdot,y)$ to \eqref{volterra_prop}. Using \eqref{volterra_no_atom} together with \eqref{bound_atom}, we obtain for $x\in[y,T]$
				\begin{align*}
					\sup_{y\leq u\leq x}|H(u,y)-\tilde{H}(u,y)|&\leq C(y,T)\sup_{y\leq u\leq x}\int_{[y,u)}W^{(q)}(u-z)|H(z,y)-\tilde{H}(z,y)|(\nu+\lambda)(dz)\notag\\
					&\leq C(y,T)W^{(q)}(T-y)\int_{[y,x)}\sup_{y\leq u\leq z}|H(u,y)-\tilde{H}(u,y)|(\nu+\lambda)(dz).
				\end{align*}
				Hence, by an application of Gronwall's inequality (see \cite[Appendix, Theorem 5.1]{EK}) we have that
				\begin{align*}
					\sup_{y\leq u\leq x}|H(u,y)-\tilde{H}(u,y)|=0,\quad x\in[y,T],
				\end{align*}
				and thereby the solution is unique.
								
				(ii) We now show the existence of the solution to \eqref{volterra_prop} in a {finite} interval. Using  \eqref{volterra_no_atom} we have that
				\begin{equation}\label{H_y}
				H(y,y)=K(y)h(0).
				\end{equation}
				On the other hand, note that using \eqref{H_y}, equation \eqref{volterra_no_atom} can be rewritten as follows for $x\in[y,T]$:
				\begin{align}\label{volterra_no_atom_y}
				H(x,y)&=K(x)\left[h(x-y)+\int_{(y,x)}W^{(q)}(x-u)H(u,y)(\nu-\lambda)(du)+W^{(q)}(x-y)H(y,y)(\nu-\lambda)\{y\}{1_{\{x>y\}}}\right]\notag\\
				&=K(x)\left[\tilde{h}(x,y)+\int_{(y,x)}W^{(q)}(x-u)H(u,y)(\nu-\lambda)(du)\right],
				\end{align}
				with $\tilde{h}(x,y):=h(x-y)+W^{(q)}(x-y)K(y)h(0)(\nu-\lambda)\{y\}1_{\{x>y\}}$
				, for $x\in[y,T]$.

				Let us denote by $(\nu+\lambda)^a$ and $(\nu+\lambda)^d$ the atomic and diffuse parts of the measure $\nu+\lambda$, respectively. Then, we can write
				\[
				(\nu+\lambda)^a(dz)=\sum_{i=1}^{\infty}p_i\delta_{a_i}(dz),\qquad\text{where $a_i \in \mathbb{R}$ and $p_i \geq 0$ for $i = 1, 2, \dots$}
				\]
				Using the fact that $(\nu+\lambda)[y,T]<\infty$, we can find $M\in\mathbb{N}$ such that
				\begin{align}\label{bound_atomic}
				\sum_{i=M+1,a_i\in[y,T]}^{\infty}p_i<\frac{1}{4C(y,T)W^{(q)}(T-y)}.
				\end{align}
				On the other hand, since the measure $(\nu+\lambda)^d$ is diffuse, for any $s>0$, the mapping 
				\[x\mapsto\int_{(y,x)}e^{-s(x-z)}W^{(q)}(x-z)(\nu+\lambda)^d(dz)
				\]
				is continuous. 
				
				Let us consider a sequence $(s_n)_{n\geq0}$ such that $s_n\uparrow\infty$ as $n\to\infty$. Then, if we denote
				\[
				F_n(x):=\int_{(y,x)}e^{-s_n(x-z)}W^{(q)}(x-z)(\nu+\lambda)^d(dz),\qquad x\in[y,T],
				\]
				we have that the sequence of continuous functions $(F_n)_{n\geq 1}$ is non-increasing. Additionally, using that $\nu$ and $\lambda$ are Radon measures, we obtain by dominated convergence
				\begin{align*}
				\lim_{n\to\infty}F_n(x)&=\lim_{n\to\infty}\int_{(y,x)}e^{-s_n(x-z)}W^{(q)}(x-z)(\nu+\lambda)^d(dz)\notag\\
				&\leq W^{(q)}(T-y)\lim_{n\to\infty}\int_{(y,x)}e^{-s_n(x-z)}(\nu+\lambda)^d(dz)=0,\qquad \text{for all $x\in[y,T]$}.
				\end{align*}
				Then, by Dini's Theorem, $F_n\downarrow 0$ uniformly in $[y,T]$. Therefore, we can find $s_0>0$ such that
				\begin{align}\label{bound_exi}
					\sup_{x \in[y, T]}\int_{[y,x]}e^{-s_0(x-z)}W^{(q)}(x-z)(\nu+\lambda)^d(dz)<\frac{1}{4C(y,T)}.
				\end{align}
				Let us denote by $\{b_1,b_2,\dots,b_j\}$ the subset of atoms of the set $\{a_1,\dots,a_M\}$ that lie inside the interval {$[y, T]$}, arranged in increasing order so that $b_1<b_2<\dots<b_j$.
				Now, we define the following operator for $x\in[y,b_1]$:
				\begin{align*}
					\mathcal{K}(f)(x)=\mathcal{K}^{(1)}(f)(x)=K(x)\int_{(y,x)}e^{-s_0(x-z)}W^{(q)}(x-z)f(z)(\nu-\lambda)(dz),\quad \mathcal{K}^{(n)}(f)(x)=\mathcal{K}(\mathcal{K}^{(n-1)}(f))(x),
				\end{align*}
				and
				\begin{align}\label{fun_H}
					H_0(x,y)=e^{-s_0x}{\tilde{h}(x,y)}K(x),\qquad H_{n+1}(x,y)=H_0(x,y)+\mathcal{K}(H_n(\cdot,y))(x).
				\end{align}
				By \eqref{bound_atomic} and \eqref{bound_exi}, we have
				\begin{align}\label{bound_k}
					|\mathcal{K}f(x)|\leq \frac{1}{2}\sup_{z\in{[y,b_1]}}|f(z)|,\qquad x\in[y,b_1].
				\end{align}
				Then, for $m>n$ and $x\in[y,b_1]$,
				\begin{align}
					|H_m(x,y)-H_n(x,y)|=\left|\sum_{k=n+1}^m\mathcal{K}^{(k)}( H_0(\cdot,y))(x)\right|\leq 2^{-n}\sup_{z\in[y,b_1]}|H_0(z, y)|.\label{Cauchy_seq}
				\end{align}
				Hence, $\{H_n(x,y):x\in[y,b_1]\}_{n\geq1}$ is a Cauchy sequence and, therefore, converges to a limit $\hat{H}$ in $[y,b_1]$. Then, taking $n\to\infty$ in \eqref{fun_H} , we get that 
				\begin{align*}
					\hat{H}(x,y)=K(x)\left(e^{-s_0x}\tilde{h}(x,y)+\int_{(y,x)}e^{-s_0(x-z)}W^{(q)}(x-z)\hat{H}(z,y){(\nu-\lambda)(dz)}\right),\qquad x\in[y,b_1].
				\end{align*}
				{
					Here, we used the dominated convergence theorem with \eqref{bound_k} and the fact that for $x \in [y, b_1]$, 
					\begin{align}
						|{H}_m(x, y)|\leq2\sup_{z\in[y,b_1]}|H_0(z, y)|,
					\end{align}
					which is obtained from \eqref{Cauchy_seq} with $n=0$.
					}
				Therefore, the function $H(x,y)=e^{s_0x}\hat{H}(x,y)$ satisfies \eqref{volterra_no_atom_y} for $[y,b_1]$. 
	By tracing the reverse process from \eqref{volterra_prop} to \eqref{volterra_no_atom_y}, we can see that $H(\cdot,y)$ is a solution to \eqref{volterra_prop}.

			Finally, note that if $h(u)\geq0$ for all $u\in[y,T]$ and $(\nu-\lambda)$ is a non-negative measure, then by \eqref{assum_atoms} we have  that $\tilde{h}(x,y)\geq0$ for all $x\in[y,T]$. Therefore,  the  construction of the solution $H$ implies that $H(x,y)\geq0$ for all $x\in[y,b_1]$.

				(iii) {We now extend the solution to the whole interval $[y, b_2]$.} 
				For $x\in{[y,b_2]}$, we consider the following integral equation:
			\begin{align}\label{volterra_atom_2}
			\tilde{H}(x,y)= h_1(x,y)+\int_{[y,x]\backslash\{b_1\}}W^{(q)}(x-z)\tilde{H}(z,y)(\nu-\lambda)(dz),
			\end{align}
			where $h_1(x,y)=h(x-y)+W^{(q)}(x-b_1)H(b_1,y)(\nu-\lambda)\{b_1\}$. 
	
	By rewriting \eqref{volterra_atom_2} in the form of \eqref{volterra_no_atom_y}, we obtain 
\begin{align}\label{volterra_atom_4}
\tilde{H}(x,y)= K(x)\left[\tilde{h}_1(x,y)+\int_{(y,x)\backslash\{b_1\}}W^{(q)}(x-z)\tilde{H}(z,y)(\nu-\lambda)(dz)\right], 
\end{align}
where $\tilde{h}_1(x,y):=h_1(x, y)+W^{(q)}(x-y)K(y)h(0)(\nu-\lambda)\{y\}1_{\{x>y\}}$.
			
			Then, proceeding as in Step (ii), by taking
			\begin{align*}
				\mathcal{K}(f)(x)=\mathcal{K}^{(1)}(f)(x)&=K(x)\int_{(y,x)\backslash\{b_1\}}e^{-s_0(x-z)}W^{(q)}(x-z)f(z){(\nu-\lambda)}(dz), \\
				&\mathcal{K}^{(n)}(f)(x)=\mathcal{K}(\mathcal{K}^{(n-1)}(f))(x),
			\end{align*}
			and
			\begin{align*}
				H_0(x,y)=e^{-s_0x}\tilde{h}_1(x,y)K(x),\qquad H_{n+1}(x,y)=H_0(x,y)+\mathcal{K}(H_n(\cdot,y))(x).
			\end{align*}
			Following the same argument as in (ii), the limit $\tilde{H}(x, y):=e^{s_0x}\lim_{n\to\infty}H_n(x, y)$ exists for $x \in [y, b_2]$ and is the solution to \eqref{volterra_atom_4} and \eqref{volterra_atom_2}.

			By the uniqueness of the solution to \eqref{volterra_prop} and since $h_1(x, y)=h(x- y)$ for $x \in[y, b_1)$, 
			 we have that $\tilde{H}(x,y)=H(x,y)$ for $x\in[y,b_1)$. 
			Note that, by \eqref{volterra_atom_2},  
			\begin{align}
			\tilde{H}(b_1,y)=& h(b_1-y)+W^{(q)}(0)H(b_1,y)(\nu-\lambda)\{b_1\}+\int_{[y,b_1)}W^{(q)}({b_1}-z)\tilde{H}(z,y)(\nu-\lambda)(dz)\\
			=&h(b_1-y)+\int_{[y,b_1]}W^{(q)}({b_1}-z)H(z,y)(\nu-\lambda)(dz)=H(b_1, y). 
			\end{align}
			{Hence, by \eqref{volterra_atom_2}
			\begin{align*}
			\tilde{H}(x,y)=& h(x-y)+W^{(q)}(x-b_1)\tilde{H}(b_1,y)(\nu-\lambda)\{b_1\}+\int_{[y,x]\backslash\{b_1\}}W^{(q)}(x-z)\tilde{H}(z,y)(\nu-\lambda)(dz)\\
			=&h(x-y)+\int_{[y,x]}W^{(q)}(x-z)\tilde{H}(z,y)(\nu-\lambda)(dz).
			\end{align*}
			Therefore, $\tilde{H}(\cdot, y)$ is the solution of \eqref{volterra_prop}. }
			
			 Hence, by setting $H(x,y)=\tilde{H}(x,y)$ for all $x\in[y,b_2]$, 
			 $H(\cdot,y)$ is a solution to \eqref{volterra_prop} on {$[y,b_2]$}. 
			
			On the other hand, note that if $h(u)\geq0$ for all $u\in[y,T]$ and $\nu-\lambda$ is a non-negative measure, then, by the previous step, $H(b_1,y)\geq0$. This implies that $\tilde{h}(x,y)\geq0$ for all $x\in[y,b_2]$. Therefore, as in Step {(i)}, the construction of the solution ensures that  $H(x,y)\geq0$ for all $x\in[y,b_2]$.
			
			(iv) Proceeding in the same manner for all the atoms $\{b_1,\dots,b_j\}$ and up to $T$, we can construct the solution to \eqref{volterra_prop} in the interval $[y,T]$. Moreover, if $h(u)\geq0$ for all $u\in[y,T]$ and $\nu-\lambda$ is a non-negative measure, a similar argument to that in Step (ii) shows that $H(x,y)\geq0$ for all $x\in[y,T]$.

			\end{document}